\documentclass[10 pt]{article}
\usepackage[header, margin]{classic}

\usepackage{xpatch}

\makeatletter
\xpatchcmd{\SL@margintext}
  {\{\SL@prlabelname{#1}\}}    
  {\SL@prlabelname{#1}}       
  {}{}
\makeatother

\usetikzlibrary{automata,arrows,positioning,calc}
\usepackage{amsrefs}

\usepackage{xcolor}

\begin{document} 

\title{Almost-optimal bulk regularity conditions in the CLT for G$\b$E}

\vspace{1cm}
\noindent

\begin{minipage}[b]{0.3\textwidth}
\hspace{3cm}
 
 \end{minipage}
 \begin{minipage}[b]{0.3\textwidth}
 \author{Albert Zhang}

\address{Courant Institute, \\
		New York University \\
acz271@nyu.edu}
 \end{minipage}

\begin{minipage}[b]{0.3\textwidth}

 \end{minipage}

\begin{abstract}
		We consider linear statistics $\sum_{k=1}^N f(x_k)$ of a Gaussian $\b$-ensemble. Following the work of \cite{LaSo22}, which was concerned  with Wigner matrices, we show that the central limit theorem holds for any $f \in H^{1/2 + \e}$ and $\b > 0$. The novel input is the Helffer-Sj\"ostrand representation for the covariance, first appearing in the random matrix context in \cite{ErYa15}, coupled with characteristic flow estimates for DBM, from \cite{Bo22}.
\end{abstract}

\tableofcontents

\newcommand{\Brown}[1]{\textcolor{Brown}{#1}}
\newcommand{\Orange}[1]{\textcolor{BurntOrange}{#1}}
\newcommand{\Navy}[1]{\textcolor{BlueViolet}{#1}}
\newcommand{\Blue}[1]{\textcolor{Cerulean}{#1}}
\newcommand{\Teal}[1]{\textcolor{PineGreen}{#1}}

\section{Introduction}%
\label{sec:intro}

\subsection{Fluctuations of linear statistics}

In this work, we are concerned with the fluctuations of linear statistics $F_N(\xx) = \sum_{k=1}^N f(x_k)$ of Gaussian $\b$-ensembles (G$\b$E),
\begin{equation}\label{d:GbE}
		d\nu(\xx) = d\nu_{N, \b}(\xx) := \frac{1}{Z_{N,\b}} \underbrace{\exp \l( - \frac{N \b}{4} |\xx|^2 + \b \sum_{i < j} \log |x_i - x_j| \r)}_{=: \exp(-\cal H(\xx))} d\xx.
\end{equation}
In particular, we consider functions $f \in H^{1/2+ \e}(\R)$ of low regularity, where we recall that the homogeneous and inhomogeneous (fractional) Sobolev spaces $\dot H^s = \dot H^s(\R)$ and $H^s = H^s(\R)$, respectively, are the spaces of all functions such that the norms
\begin{equation}
		\lV f \rV_{\dot H^s}^2 := \int |\hat f(\xi)|^2 |\xi |^{2s} d\xi, \qquad \lV f \rV_{H^s}^2 := \int |\hat f(\xi)|^2 ( 1 + |\xi|^{2s}) d\xi,
\end{equation}
are finite, where $\hat f$ denotes the Fourier transform (see (\ref{d:fourier}) for our scaling convention). Recall that for the particular case of $s = 1/2$, we have the identity,
\begin{equation}
		\lV f \rV_{\dot H^{1/2}} = \frac{1}{2\pi} \int\int_{\R^2} \frac{(f(x) - f(y))^2}{(x - y)^2} dx dy.
\end{equation}
The significance of this form in the context of $\b$-ensembles is that one expects a (centered) linear statistic $F_N$ to converge in distribution to a Gaussian random variable with variance \cites{BeLeSe18,Jo98}
\begin{equation}\label{e:V(f)}
		v(f) := \frac{1}{2\b\pi^2} \int_{-2}^2 \int_{-2}^2 \frac{(f(x) - f(y))^2}{x - y)^2} \frac{4 - xy}{\sqrt{4 - x^2} \sqrt{4 - y^2}} dx dy. 
\end{equation}
See \cites{LaSo22,LyPa09} for an analogous expression in the Wigner matrix setting, where there is an additional term depending on the fourth cumulant of the matrix entries.

The study of fluctuations and central limit theorems (CLT) for linear statistics of $\b$-ensembles was initiated by Johansson \cite{Jo98}, whose approach centered around a variational equation that is now commonly referred to as the loop equation. Later works in this direction include \cite{La21} which obtained a CLT in the regularity class $f \in C^3$ for the Circular $\b$-ensembles (C$\b$E), and \cite{BoMoPa22} which expanded the loop equation hierarchy to obtain CLTs for the Green's function and log-characteristic polynomial,
\begin{equation}
		s(z) = s_N(z) := \frac{1}{N} \sum^{N}_{k=1} \frac{1}{x_k - z}, \qquad L(E) = L_N(E) := \sum^{N}_{k=1} \log(E - x_k),
\end{equation}
for general $\b$-ensembles on the line (with standard one-cut and smoothness assumptions on the potential). On the other hand, for $f \in C^5$ and $V \in C^6$ a CLT was obtained in \cite{BeLo18} for the \emph{mesoscopic} linear statistics
\begin{equation}
		\sum_{k=1}^N f(N^a(x_k - E)),
\end{equation}
for fixed $0 < a < 1$ and $E$ in the bulk. An alternate line in \cite{BeLeSe18} obtains the CLT for $f \in C^4$ and more general assumptions (such as multi-cut) on $V$, using a transport approach. Johansson conjectured \cite{Jo98}*{Remark 2.5} that the only condition one needs for the CLT to hold should be that the limiting expression of the variance (\ref{e:V(f)}) is finite. Thus a natural line of work has been to push this conjecture for functions in the fractional Sobolev spaces $H^s$, with the aim of getting as close to $s = 1/2$ as possible. This was carried out in \cite{SoWo13} and then later in \cite{LaSo22} to obtain CLTs for smooth Wigner matrices, with a critical input being the use of Littlewood-Paley theory to decompose the test function $f$ (in $H^{1/2+\e}$ or $C^{1/2+\e}$) into scales. For a more exhaustive list of works on the fluctuations of linear statistics for other random matrix models and classes of test functions, refer to \cite{LaSo22} and the references therein.

The following is our main result. As noted in \cite{LaSo22}, the result is optimal up to the $\e > 0$ factor, as the CLT does not hold for functions in $H^{1/2 - \e}$ which can be seen from the example of the indicator function $\1_{x \leq E}$.

\begin{theorem}\label{t:var-H1/2}
		Let $\nu = \nu_{N,\b}$ be given as in (\ref{d:GbE}), for any $\b > 0$. For any $\e > 0$ and $\kappa > 0$, there is a $C = C(\b, \e, \kappa) > 0$ such that the following holds. For any $f : \R \to \R$ supported in $(-2 + \kappa, 2 - \kappa)$ we have,
		\begin{equation}\label{e:var-H1/2}
				\Var_\nu \l( \sum^{N}_{k=1} f(x_k) \r) \leq C \lV f \rV_{H^{ \frac{1}{2} + \e}}^2.
		\end{equation}
		Moreover, the centered linear statistic $F - \E[F]$ converges to a Gaussian random variable with variance as given in (\ref{e:V(f)}).
\end{theorem}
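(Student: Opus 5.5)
We prove the variance bound (\ref{e:var-H1/2}) first and obtain the CLT from it by approximation. Given the bound, the CLT follows by a standard density argument. Approximate $f\in H^{1/2+\e}$ supported in $(-2+\kappa,2-\kappa)$ by smooth $f_n$ with the same support property and $\lV f-f_n\rV_{H^{1/2+\e}}\to 0$. For each smooth $f_n$ the CLT with limiting variance $v(f_n)$ is known; for instance it follows from \cite{BeLeSe18} or \cite{Jo98} for $f_n \in C^4$. The bound (\ref{e:var-H1/2}), applied to $f-f_n$, gives uniform-in-$N$ $L^2$ control of the difference of the centered statistics. Finally, $v$ is continuous in $H^{1/2}$ on functions supported away from $\pm 2$, so $v(f_n)\to v(f)$. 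Convergence of characteristic functions then follows by the usual $\e/3$ argument.

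For the variance bound we use the Helffer--Sj\"ostrand representation of the covariance of the Gibbs measure $\nu$. Since $\cal H$ is convex on the Weyl chamber, with Hessian $\ge \frac{N\b}{2}$, we can write
\begin{equation*}
\Var_\nu(F) = \E_\nu\big[\langle \nabla F, (\cal L)^{-1}\nabla F\rangle\big], \qquad \cal L = -\Delta + \nabla\cal H\cdot\nabla + \nabla^2\cal H ,
\end{equation*}
where $\nabla F = (f'(x_k))_k$. Equivalently, $\cal L^{-1} = \int_0^\infty e^{-t\cal L}dt$, and $e^{-t\cal L}$ is realized by the linearization of Dyson Brownian motion started from $\nu$ (which is stationary). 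In this picture $\Var(F)=\int_0^\infty \E[\sum_k f'(x_k(0)) v_k(t)]\,dt$, where $v(t)$ solves the parabolic equation $\partial_t v_k = -\frac{\b}{2}\sum_{j\neq k}\frac{v_k-v_j}{(x_k-x_j)^2} - \frac{\b}{4}v_k$ (up to the normalization of time) with initial data $v_k(0)=f'(x_k(0))$. The constant mass term gives exponential decay in $t$ at rate $O(1)$, so only $t\lesssim 1$ (indeed $t\lesssim \log N$) matters.

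To exploit the $H^{1/2+\e}$ regularity we decompose $f=\sum_j P_j f$ into Littlewood--Paley pieces at frequency $2^j$, following \cite{LaSo22}. For a piece at scale $\eta=2^{-j}$, the solution $v(t)$ of the linearized equation behaves like $f'$ convolved with a Poisson-type kernel of width $\eta+t$. This is exactly what the characteristic flow estimates of \cite{Bo22} give: the kernel of the equation is approximated by the Stieltjes-transform flow $s_t(z)$ along characteristics, with rigidity of the bulk eigenvalues (at scale $N^{-1+\delta}$) controlling the errors uniformly for $E$ in $(-2+\kappa/2,2-\kappa/2)$. The term $\sum_k f'(x_k)v_k(t)$ then becomes approximately $N\int f'\,(P_{t}* f')\,d\rho_{sc}$, where $P_t$ is the Poisson kernel at scale $t$. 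Integrating in $t$ and using the identity $\int_0^\infty \langle f', P_t* f'\rangle dt \asymp \lV f\rV_{\dot H^{1/2}}^2$, we get $\Var(P_jf)\lesssim \lV P_jf\rV_{\dot H^{1/2}}^2 + \text{errors}$, with the $1/N$ normalization matching the $N$ eigenvalues. Pieces with $2^j\ge N^{1-\delta}$ are handled crudely: we bound $\Var(\sum_k g(x_k))\le N\sum_k \E g'(x_k)^2/N$ via the Brascamp--Lieb inequality, which combined with local density bounds gives $N^{C\delta}2^{j}\lV P_jf\rV_{L^2}^2\cdot(\text{small})$. This is summable against $2^{2j\e}$ once $\delta\ll\e$. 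Finally, Cauchy--Schwarz over $j$ with weights $2^{\e j}$ converts $\sum_j \Var(P_jf)^{1/2}$ into the $H^{1/2+\e}$ norm.

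The main obstacle is the mesoscopic step: we must show that the DBM/characteristic-flow approximation of $\cal L^{-1}$ acting on $f'$ holds with errors that are only $N^{o(1)}$-lossy uniformly down to scales $\eta\approx N^{-1+\delta}$. We must also show that these errors carry a factor $(N\eta)^{-c}$, so that they can absorb the loss and remain summable over dyadic scales. We would first try the high-probability bound $|\tilde s_t(z)-s_t(z)|\le N^{\delta}/(N\Im z)$ along characteristics from \cite{Bo22}, and represent $v(t)$ through the Helffer--Sj\"ostrand formula for $f'$ applied to the resolvent. The tail of the expectation off the rigidity event is controlled by the trivial bound $\Var\le \sup|f'|^2/\b\cdot O(1)$ from Brascamp--Lieb.
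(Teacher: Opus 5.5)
\textbf{Verdict: genuine gap.} Your reduction of the CLT to the variance bound by density is fine. The variance bound, however, fails for the pieces with $2^j\ge N^{1-\delta}$. Brascamp--Lieb with $\nabla^2\mathcal H\ge\frac{N\beta}{2}$ gives $\Var_\nu(\sum_k g(x_k))\le\frac{2}{N\beta}\sum_k\E[g'(x_k)^2]$; your display carries a spurious factor $N$. With the density bound $\E\sum_k h(x_k)\lesssim N\lVert h\rVert_{L^1}$ for $h\ge 0$ (Lemma \ref{l:wegner}), this is of order $\lVert g'\rVert_2^2\asymp 2^{2j}\lVert P_jf\rVert_2^2$.

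This is an $H^1$ bound. Compared with the required $N^{o(1)}2^{j}\lVert P_jf\rVert_2^2$, it loses a factor $2^j\ge N^{1-\delta}$, and the weight $2^{2j\varepsilon}$ cannot absorb this since $\varepsilon<1/2$. Nothing in it is ``small''. So the microscopic, submicroscopic and very high frequency regimes are not covered at all, and that is exactly where $H^{1/2+\varepsilon}$ versus $H^1$ is decided.

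The paper, following \cite{LaSo22}, expands $\sum_{a,b}\Cov(f_\ell(x_a),f_\ell(x_b))$ and splits the pairs:
\begin{itemize}
\item pairs outside $I_\alpha^2$ are handled by the decay in Proposition \ref{p:LP-decay} (Proposition \ref{p:submicro-edge});
\item close pairs $|a-b|<N^{\delta_1}/(N\eta)$, and all pairs once $\eta\lesssim N^{-2+\delta_1}$, are handled by the Wegner estimate (Proposition \ref{p:submicro-close}), giving a diagonal-type bound $N^{\delta_1+\varepsilon}\eta^{-1}\lVert g\rVert_2^2$;
\item far bulk pairs are handled by the two-stage coupling of DBMs (independent noise on $[0,T_{ab}]$, common noise afterwards) together with Theorem \ref{t:LaSo22-2.6}. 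This yields $x_a=\Xi_a+Z_a+O(N^{\varepsilon}/(N^2t_{ab}))$ with $Z_a$ Gaussian, independent of $\Xi$, of variance $\asymp\log N/N^2$. Its characteristic function then kills $\hat f_\ell$ at $|\xi|\gtrsim N$ (Lemmas \ref{l:cov-stab-submicro} and \ref{l:cov-Xi-tiny}).
\end{itemize}
The missing idea is this independent smoothing of each particle at scale $\sqrt{\log N}/N$, which makes off-diagonal correlations at these frequencies negligible. A Brascamp--Lieb-type gradient bound, which pays the full factor $2^{2j}$ from $g'$, cannot capture it.

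Your mesoscopic step is also only a plan. The error control you call ``the main obstacle'' is not supplied. Approximating $v(t)$ by $P_t*f'$ for signed data needs errors that respect the cancellation in $P_t*f'$ once $t\gg\eta$. Relative-error characteristic bounds such as Lemma \ref{l:ch-ftz} do not give this.

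The paper's route is different. It integrates $\Cov_\nu(\Im s(z),\Im s(w))$ against $g_\ell\otimes g_\ell$. In (\ref{e:c2-repr}) it uses two facts: $\mathcal U(t)$ has nonnegative entries, and $|x-z|^{-2}=\eta_z^{-1}\Im(x-z)^{-1}$. Hence Lemma \ref{l:ch-ftz} is applied only to the nonnegative data $e_a$, and rigidity reduces the short-time part to a convolution of two Cauchy kernels.

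Be careful if you lean on it, though; the paper's argument has two problems.
\begin{itemize}
\item The proof of (\ref{e:c2-est}) replaces $\Im f_t(w)$ by $\varphi^C(N\eta_w)^{-1/2}\Im f_0(w_t)$. Lemma \ref{l:ch-ftz} only gives $(1+\varphi^C(N\eta_w)^{-1/2})\Im f_0(w_t)$, so the leading term is dropped.
\item It bounds $\Cov_\nu(s(z),Q_T[s(w)])$ with a factor $e^{-cN\beta T}$. But the spectral gap of $L$ is $1/2$, with eigenfunction $\sum_k x_k$. Moreover, since $\Im w_t\approx\eta_w+\pi\rho_{\mathrm{sc}}(E_w)t$, $Q_T[s(w)]$ decorrelates only on the time scale $\eta_w$.
\end{itemize}
The resulting bound $(N2^{-\ell})^{-3/2}2^{\ell}\lVert g_\ell\rVert_2^2$ would make mesoscopic variances vanish, contradicting \cite{BeLo18}.

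With the leading term kept, the positivity bound has the right order only for $T\lesssim N^{\varepsilon}2^{-\ell}$. The remainder $\Var_\nu(Q_{T/2}F)$ then has to be handled at the coarser scale $T$, much as in your Poisson-kernel heuristic. So the point you flagged is genuinely the delicate one in the mesoscopic regime as well.
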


It is worth mentioning at this point that our work does not rely on the loop equation method at all\footnote{Of course, one may take the perspective that the equations arising from dynamical methods are (non-equilibrium) manifestations of the (equilibrium) loop equations, or vice versa.}, but depends rather on the more recent \emph{dynamical} breakthroughs in the understanding of Dyson Brownian motion (DBM),
\begin{equation}
		dx_k(t) = \sqrt{ \frac{2}{N\b} } dB_k(t) + \l( \frac{1}{N} \sum_{j \neq k} \frac{1}{x_k(t) - x_j(t)} - \frac{1}{2} x_k(t) \r) dt, \qquad k = 1, 2, \ldots, N,
\end{equation}
where $(B_k)_{k=1}^N$ are independent standard Brownian motions. In particular, starting from \cite{Bo22} we now have a fairly robust understanding of the homogenization of the nonlocal dynamics,
\begin{equation}\label{e:nonlocal-dyn}
		\frac{d}{dt} \g u_k(t) = \l( \l[\V^2 \cal H(\xx_t) \r] \g u \r) (k) = \frac{1}{N}\sum_{j \neq k} \frac{\g u_j(t) - \g u_k(t)}{( x_k(t) - x_j(t))^2},
\end{equation}
which can be seen as a noisy $\D^{1/2}$ operator driven by a log-gas random environment.  An analysis by characteristics and maximum principle yield the (submicroscopic) estimate  (\ref{e:gap-coupling-N2t}), which enabled the authors in \cite{LaSo22} to push previous techniques (notably, Littlewood-Paley theory) in \cite{SoWo13}, extending their CLT to more general smooth Wigner matrices and $f \in H^{1/2 + \e}$. A large part of our work builds off of the multiscale Littlewood-Paley analysis\footnote{In fact, most of the analysis is exactly the same, by just replacing the words ``smooth Wigner matrix'' with ``Gaussian $\b$-ensemble''.} in \cite{LaSo22}, with our main contribution being a robust way to handle the mesoscopic scales $(N^{-1}, 1]$, which now works for general $\b > 0$. In particular, we recall a certain Helffer-Sj\"ostrand representation\footnote{Not to be confused with the \emph{other} Helffer-Sj\"ostrand representation in the random matrix literature which is built from a quasi-analytic extension of $f$. Though, the two certainly can be seen to be part of the same (methodological) forest, even if they are not the same (technical) tree.}, which has its (mathematical physics) roots in \cites{HeSj94,NaSp97}, and later saw an application to (random matrix and $\b$-ensembles) universality in \cite{ErYa15}. However, until (relatively recent) sharp estimates for the heat kernel of the dynamics (\ref{e:nonlocal-dyn}) were developed \cite{BoCi26}, one could not leverage the Helffer-Sj\"ostrand representation fully towards the CLT program. This is the (now feasible) step that we carry out in this paper.

\subsection{Organization and notation of the paper}

In this paper, we denote $c, C > 0$ to be generic small and large constants, respectively, which do not depend on $N$ but may vary from line to line. Also let
\begin{equation}
		\vphi = e^{C_0 (\log \log N)^2}
\end{equation}
be a subpolynomial error parameter for some fixed $C_0 > 0$. We denote $s(z) = s_N(z) = \frac{1}{N} \sum_{k=1}^N \frac{1}{x_k - z}$ the Green's function associated to the particles of the G$\b$E (\ref{d:GbE}), and
\begin{equation}
		\rho_{\t{sc}}(x) = \frac{1}{2\pi} \sqrt{4 - x^2} \1_{|x| < 2}, \qquad m_{\t{sc}}(z) = \frac{-z + \sqrt{z^2 - 4}}{2},
\end{equation}
the semicircle law and its Stieltjes transform, respectively. Define the $k$-th typical location $\ga_k$ through $\int_{-\infty}^{\ga_k} d\rho_{\t{sc}} = \frac{k}{N}$. For $N_1 < N_2 \in \N$ we use the notation $[\![N_1, N_2]\!]  \{N_1, N_1 + 1, N_1 + 2, \ldots, N_2\}$ and $[\![N ]\!] = [\![ 1, N]\!]$. We say that an index (or particle) $k \in [\![N]\!]$ is a bulk index (or bulk particle) if there is some $0 < \a < 1/2$ (or $0 < \d < 2$) such that $k \in [\![\a N, (1 - \a) N]\!]$ (or $x_k \in (-2 + \d, 2 - \d)$, respectively).

Regarding asymptotics, we use $x \lesssim y$ to denote that there is a universal constant $C > 0$ such that $x \leq C y$ for all sufficiently large $N$. If both $x \lesssim y$ and $y \lesssim x$ then we may use $x \sim y$ or $x\asymp y$. We say that an event $\cal E$ occurs with \emph{overwhelming probability} if for any $D > 0$ (large, but not depending on $N$), there exists an $N_0 > 0$ such that for all $N > N_0$ it holds
\begin{equation}
		\P(\cal E) > 1 - N^{-D}.
\end{equation}
In this paper, we take for granted all existing local laws and their consequences, such as rigidity along Dyson Brownian motion (DBM), e.g. \cite{Bo22}*{Lemma 2.3}.

Regarding organization, we recall and explain the Helffer-Sj\"ostrand representation in the context of DBM in section \S\ref{sub:Decomposition along Langevin}. In \S\ref{sub:cov-repr} we use this representation to obtain an estimate on the two-point correlators $\Cov_\nu(s(z), s(w))$. In \S\ref{sec:clt-lin-stat} we carry out the same program as in \cite{LaSo22}, briefly recalling the needed Littlewood-Paley theory in \S\ref{sub:Littlewood-Paley decompositions},and then handling the mesoscopic, submicroscopic, and very high frequencies regimes in \S\ref{sub:var-lin-stat}, \S\ref{sub:regime2}, and \S\ref{sub:regime3}, respectively. The proof of the main result, Theorem \ref{t:var-H1/2}, also is organized and done in \S\ref{sub:var-lin-stat}.

\subsection{AI use statement, acknowledgements}

The model ChatGPT 5.6 was used for mathematical exploration and literature searches pertaining (for example) to the keywords: loop equations, CLT for linear statistics, Poincar\'e inequalities. The only mathematical arguments found by the LLM are the short technical Lemmas \ref{l:wegner} and \ref{l:log-div-1/Z} in the appendix. The main idea to use the Helffer-Sj\"ostrand representation (originally applied to a random matrix context in \cite{ErYa15}, for covariances between gaps observables) to handle mesoscopic scales is the author's own, after Paul Bourgade suggested to focus our approach on the two-point correlators,
\begin{equation}
		\Cov_\nu(s(z), s(w)), \qquad z, w \in \C \setminus \R .
\end{equation}
No AI model was used in the writing process of this paper. The author would like to thank Paul Bourgade for many useful conversations.

%

\section{Helffer-Sj\"ostrand representation}%
\label{sec:HS-repr}

\subsection{Decomposition and homogenization along Dyson Brownian motion}%
\label{sub:Decomposition along Langevin}
Recall the Gaussian $\b$-ensemble, given by
\begin{equation}
		d\nu(\xx) = d\nu_{N, \b} := \frac{1}{Z_{N,\b}} \underbrace{\exp \l( - \frac{N \b}{4} |\xx|^2 + \b \sum_{i < j} \log |x_i - x_j| \r)}_{=: \exp(-\cal H(\xx))} d\xx.
\end{equation}
It is the equilbrium measure for the following system of SDE's,
\begin{equation}\label{e:dbm-sde}
		dx_k(t) = \sqrt{ \frac{2}{N\b} } dB_k(t) + \l( \frac{1}{N} \sum_{j \neq k} \frac{1}{x_k(t) - x_j(t)} - \frac{1}{2} x_k(t) \r) dt, \qquad k = 1, 2, \ldots, N,
\end{equation}
where $(B_k)_{k=1}^N$ are independent standard Brownian motions. The generator associated to the above (reversible) Langevin dynamics is given by
\begin{equation}
		L f = L_{N,\b} f = \frac{1}{N\b} \l[ \D f - \V \cal H \* \V f \r] ,
\end{equation}
Let $(Q_t)_{t \geq 0}$ denote the associated Markov semigroup, self-adjoint with respect to $d\nu$. 

The basic idea of the Helffer-Sj\"ostrand representation is to leverage the following commutation relation:
\begin{equation}\label{e:VL=LV}
		\V L = \bbL \V,
\end{equation}
where the operator $\bbL$ acts on vector fields $F = (F^i)_{i=1}^N$ through (with Einstein notation)
\begin{equation}
		\l[ \bbL F \r]^i = L F^i - \frac{1}{N\b}  \l( \V^i \V_k \cal H \r) \l( \V^k F \r).
\end{equation}
Visually, 
\begin{equation}
		\bbL F = \l( \begin{bmatrix}
				L & 0 & \cdots & 0 \\
				0 & L & \ddots & \vdots \\
				\vdots & \ddots & L & 0 \\
				0 & \cdots & 0 & L
		\end{bmatrix} - \frac{1}{N\b}\V^2 \cal H \r) \begin{bmatrix}
				F^1 \\
				\vdots \\
				F^N
		\end{bmatrix}.
\end{equation}
Notably, from (\ref{e:VL=LV}) we have that
\begin{equation}\label{e:Gtx-evo}
		\bd_t \V Q_t f = \bbL \V Q_t f,
\end{equation}
from which we may deduce the following Feynman-Kac representation\footnote{This is first derived in the $\b$-ensembles context in \cite{ErYa15}, with a different scaling in $(N,\b)$ convention.} for $\GG(t, \xx) := \V P_t f(\xx)$:
\begin{proposition}\label{p:feyn-kac}
		Let $\E_{\xx}$ denote expectation with respect to the process (\ref{e:dbm-sde}) started at $\xx_0 = \xx$. For all $t > 0$, 
		\begin{equation}\label{e:Gtx-FK}
				\GG(t, \xx) = \E_{\xx} \l[ \exp \l( - \frac{1}{N\b} \int_0^t \V^2 \cal H(\xx_s) ds \r) \GG(0, \xx(t)) \r].
		\end{equation}
\end{proposition}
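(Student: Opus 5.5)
The plan is to prove \eqref{e:Gtx-FK} by a backward-martingale (Duhamel) argument driven by the commutation relation \eqref{e:VL=LV}. Write $P_t = Q_t$ for the semigroup and set $\GG(t,\xx) = \V Q_t f(\xx)$. By \eqref{e:Gtx-evo}, $\GG$ solves the vector-valued parabolic system
\begin{equation*}
\bd_t \GG = L\GG - \frac{1}{N\b}\V^2\cal H\, \GG,
\end{equation*}
where $L$ acts componentwise. This system is a Schr\"odinger-type equation whose ``potential'' is the matrix $\frac{1}{N\b}\V^2 \cal H$. We then prove the representation by the standard device: for fixed $t$, introduce the matrix-valued multiplicative functional $M_s$ along the DBM path, defined by
\begin{equation*}
\frac{d}{ds} M_s = -\frac{1}{N\b} M_s \V^2\cal H(\xx_s), \qquad M_0 = \mathrm{Id}.
\end{equation*}
The exponential in \eqref{e:Gtx-FK} is interpreted as this time-ordered exponential, since the matrices $\V^2\cal H(\xx_s)$ need not commute at different times. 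We also consider the process $Y_s = M_s \GG(t-s, \xx_s)$ for $s\in[0,t]$.

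By It\^o's formula applied to $\GG(t-s,\xx_s)$ under \eqref{e:dbm-sde}, whose generator is $L$ (after checking that the drift and diffusion coefficients match $L=\frac{1}{N\b}[\D - \V\cal H\cdot\V]$), we obtain
\begin{equation*}
dY_s = M_s\Big(-\bd_t\GG + L\GG - \tfrac{1}{N\b}\V^2\cal H\,\GG\Big)(t-s,\xx_s)\,ds + dM^{\mathrm{loc}}_s,
\end{equation*}
where $dM^{\mathrm{loc}}_s$ is a local martingale term. The drift vanishes by the evolution equation above. Hence $Y$ is a local martingale. Taking expectations at $s=0$ and $s=t$ gives
\begin{equation*}
\GG(t,\xx) = \E_\xx[M_t \GG(0,\xx_t)],
\end{equation*}
which is \eqref{e:Gtx-FK}.

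The main obstacle is justifying these formal steps. The first issue is that $\cal H$ is singular on the collision set $\{x_i=x_j\}$. This is handled by the fact that for $\b\ge1$ the DBM a.s. never collides, while for $\b<1$ one works on the Weyl chamber with its natural (reflecting/entrance) interpretation. In either case, $\V^2\cal H$ is well defined along paths for a.e. $s$. The second issue is promoting the local martingale to a true martingale.

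For the second issue the key structural fact is that $\V^2\cal H = \frac{N\b}{2}\mathrm{Id} + \b\,A$, where $A$ is the positive semidefinite graph Laplacian with weights $(x_i-x_j)^{-2}$. Consequently
\begin{equation*}
\frac{d}{ds}|M_s v|^2 \le -\frac{1}{N\b}\cdot\frac{N\b}{2}\cdot 2\,|M_sv|^2 \le 0,
\end{equation*}
so $\|M_s\|\le e^{-s/2}\le 1$ deterministically. Then, for $f$ smooth and compactly supported (and hence $\GG$ bounded, by the same contraction applied to a localized version), a stopping-time localization $\tau_n$ (first time the minimal gap drops below $1/n$ or $|\xx|>n$) together with dominated convergence yields the identity. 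The boundedness of $M$ is what makes this step clean.

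Finally, general $f$ in the relevant class follows by density, since both sides are continuous under $L^2(\nu)$ approximation of $\V f$.
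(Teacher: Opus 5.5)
Your route is the one the paper has in mind. The paper gives no argument beyond deducing the formula from (\ref{e:Gtx-evo}) and citing \cite{ErYa15}. Your formal core is correct: with $M_s$ multiplying on the right, the drift of $Y_s=M_s\GG(t-s,\xx_s)$ cancels exactly by (\ref{e:Gtx-evo}).

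The genuine gap is $\beta<1$. That $\nabla^2\mathcal H(\xx_s)$ exists for a.e.\ $s$ is not what the argument needs. For $\beta<1$ the particles do collide. Near a collision the gap $x_i-x_j$ is, up to scaling and a smooth drift, a Bessel process of dimension $1+\beta<2$, for which $\int (x_i-x_j)^{-2}\,ds=\infty$ on every time interval containing the collision. This breaks your argument in two places:
\begin{enumerate}
\item The ODE defining $M_s$ has a non-integrable coefficient, so it does not define the time-ordered exponential. That exponential can only be built as a limit, which annihilates $M_s(e_i-e_j)$ at the collision.
\item Your stopping times $\tau_n$ increase to the first collision time, which is $<t$ with positive probability, rather than to $\infty$. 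Letting $n\to\infty$ in the localized identity therefore does not produce (\ref{e:Gtx-FK}).
\end{enumerate}
Some additional argument across collisions, or a regularization of the interaction, is needed and is not supplied. As written, your proof covers only $\beta\ge1$, which is also the regime of \cite{ErYa15}.

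Even for $\beta\ge1$, two steps need repair.
\begin{enumerate}
\item \textbf{Contraction bound.} Your computation is mis-ordered. From $\frac{d}{ds}M_s=-\frac{1}{N\beta}M_s\nabla^2\mathcal H(\xx_s)$ you get $\frac{d}{ds}|M_sv|^2=-\frac{2}{N\beta}\langle M_sv,M_s\nabla^2\mathcal H\,v\rangle$, which has no sign. Run it instead for $M_s^\top v$, using $\frac{d}{ds}M_s^\top=-\frac{1}{N\beta}\nabla^2\mathcal H(\xx_s)M_s^\top$. This gives $\|M_s\|=\|M_s^\top\|\le e^{-s/2}$.
\item \textbf{Global bound on $\GG$.} The bound you need for dominated convergence is circular as stated: the localized identity controls $\GG(t,\xx)$ only through the values of $\GG$ at the exit point $\xx_{\tau_n}$. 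You need an independent a priori estimate, and synchronous coupling supplies one. Two solutions of (\ref{e:dbm-sde}) driven by the same Brownian motion satisfy $\frac{d}{dt}|\xx_t-\xx_t'|^2\le-|\xx_t-\xx_t'|^2$, because $\nabla^2\mathcal H\ge\frac{N\beta}{2}$ on the convex Weyl chamber. Hence $\|\nabla Q_tf\|_\infty\le e^{-t/2}\|\nabla f\|_\infty$.
\end{enumerate}

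The coupling observation also gives a shorter proof. Since the noise is additive, the Jacobian $J_t=D_{\xx}\xx_t$ of the flow solves $\frac{d}{dt}J_t=-\frac{1}{N\beta}\nabla^2\mathcal H(\xx_t)J_t$ with $J_0=\mathrm{Id}$. So $J_t^\top=M_t$, and (\ref{e:Gtx-FK}) is just the chain rule $\nabla_{\xx}\E[f(\xx_t)]=\E[J_t^\top\nabla f(\xx_t)]$. Differentiation under the expectation is justified by $\|J_t\|\le e^{-t/2}$, and this avoids It\^o's formula and any parabolic regularity theory for $\GG$.
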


Now, for any fixed path $\{\xx(s) : s > 0\}$, the matrix exponential in (\ref{e:Gtx-FK}),
\begin{equation}
		\cal U(t) = \cal U(t; \xx(\*)) := \exp \l( - \frac{1}{N\b} \int_0^t \V^2 \cal H(\xx_s) ds \r),
\end{equation}
satisfies the matrix evolution equation,
\begin{equation}
		\bd_t \cal U(t) = - \frac{1}{N\b} \cal U(t) \V^2 \cal H(\xx_t), \qquad \cal U(0) = \id_{N \x N}.
\end{equation}
Each of the rows of $\cal U(t)$, denoted $\g u^a = (\g u^a_b)_{b=1}^N = (\cal U_{ab})_{b=1}^N$, satisfies the non-local parabolic equation (subject to the random environment/path $\xx(\*)$),
\begin{equation}\label{e:nonlocal}
		\bd_t \g u_k(t) = \frac{1}{N\b} \sum_{\ell \neq k} \frac{\g u_\ell(t) - \g u_k(t)}{(x_k(t) - x_\ell(t))^2} =: \l( \cal B(t) \g u(t) \r)(k).
\end{equation}
The operator $\cal B(t) = -\frac{1}{N\b} \V^2 \cal H(\xx_t)$ is now well-understood. Beginning in \cite{Bo22}, it was observed that observables of type
\begin{equation}
		f_t(z) := e^{-t/2} \sum^{N}_{k=1} \frac{\g u_k(t)}{x_k(t) - z}
\end{equation}
can be used to control solutions $\g u_k(t)$ to the nonlocal equation (\ref{e:nonlocal}), via the method of (stochastic) characteristics. Explicitly, the dynamics of $f$ are given, to leading order, by the complex Burgers equation
\begin{equation}
		\bd_t h = \frac{\sqrt{z^2 - 4}}{2} \bd_z h,
\end{equation}
whose characteristics are 
\begin{equation}\label{e:zt}
		z_t = \frac{e^{t/2} \l( z + \sqrt{z^2 - 4} \r) + e^{-t/2} \l( z - \sqrt{z^2 - 4} \r)}{2}.
\end{equation}
That is, we have the approximation $f_t(z) \approx f_0(z_t)$, which is made precise by the following lemma, which will appear in \cite{BoCi26}:

\begin{lemma}\label{l:ch-ftz}
		Let $z = E + \i \h$ with $E \in [-2 + \e, 2 - \e]$ and $\h > 0$. Then with overwhelming probability, we have 
		\begin{equation}
				|f_s(z) - f_0(z_s)| \leq \vphi^C \frac{\Im f_0(z_s)}{\sqrt{N\h \Im m_0(z_s)}} \qquad \t{for all} \qquad  0 < s < 1.
		\end{equation}
\end{lemma}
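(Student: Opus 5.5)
The plan is to run the stochastic characteristics argument of \cite{Bo22} on the observable $f_t$, with the row $\g u$ of $\cal U$ playing the role of the initial data. First record two structural facts. The entries of $\g u(t)$ are nonnegative: the off-diagonal entries of $\cal B(t)$ are nonnegative, so $e^{\int \cal B}$-type propagators preserve positivity. Moreover, by the maximum principle for (\ref{e:nonlocal}), $\max_k \g u_k(t)$ is nonincreasing. Positivity gives the key identity
\begin{equation*}
\Im f_t(z) = e^{-t/2}\,\h \sum_k \frac{\g u_k(t)}{|x_k(t)-z|^2},
\end{equation*}
which will control every error term.

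Fix $s\in(0,1)$ and $z$, and set $g(t) := f_t(z_{s-t})$ for $0\le t\le s$, so that $g(s)=f_s(z)$ and $g(0)=f_0(z_s)$. Compute $dg$ by It\^o's formula, using (\ref{e:dbm-sde}) for $\xx$ and (\ref{e:nonlocal}) for $\g u$. The next step is to symmetrize the $\cal B$-term against the drift $\frac1N\sum_{j}\frac{1}{x_k-x_j}$. As in \cite{Bo22}, the double sums should collapse: the $dt$ part becomes $\bigl(s_t(w)+\tfrac w2\bigr)\bd_w f_t(w)$ plus $\tfrac12 f_t(w)$, which is absorbed by the $e^{-t/2}$ factor, plus a $\b$-dependent correction of size $\frac{1}{N}\bigl(\frac{2}{\b}-1\bigr)e^{-t/2}\sum_k \frac{\g u_k}{(x_k-w)^3}$. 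Replacing $s_t$ by $m_{\t{sc}}$, the main part is exactly cancelled by the motion of $z_{s-t}$ along the characteristics (\ref{e:zt}). This leaves
\begin{equation*}
dg = dM_t + \bigl(s_t - m_{\t{sc}}\bigr)(z_{s-t})\,\bd_w f_t(z_{s-t})\,dt + \cal O\Bigl(\tfrac1N \textstyle\sum_k \tfrac{\g u_k}{|x_k-z_{s-t}|^3}\Bigr)dt,
\end{equation*}
where $M_t$ is a martingale with
\begin{equation*}
d\langle M\rangle_t \lesssim \frac{1}{N^2\b}\sum_k \frac{\g u_k^2}{|x_k-z_{s-t}|^4}\,dt.
\end{equation*}

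Each term is then bounded by $\Im f_t/\Im z_{s-t}$ times a small factor. We have $|\bd_w f_t|\lesssim \Im f_t/\h_t$ and $\sum_k \g u_k|x_k-w|^{-3}\le \Im f_t/\h_t^2$, where $\h_t=\Im z_{s-t}$. Rigidity and the local law along DBM give $|s_t-m_{\t{sc}}|\le \vphi/(N\h_t)$ with overwhelming probability. For the martingale we use $\sum_k \g u_k^2|x_k-w|^{-4}\le \bigl(\max_k \g u_k|x_k-w|^{-2}\bigr)\Im f_t/\h_t$. The maximum factor is bounded by $\Im f_t/\h_t$ itself, and positivity of $\g u$ then gives a bound of order $(\Im f_t/\h_t)^2$. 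Next use the behaviour of the characteristics in the bulk: $\h_t \asymp \h + (s-t)$, and, since $\Im f_0(z_r)/\Im m(z_r)$ is roughly comparable along characteristics, $\Im f_t(z_{s-t})\approx \Im f_0(z_s)$. Granting this, the integrals $\int_0^s \frac{dt}{N\h_t^2}$ are at most $\frac{1}{N\h}$, and the Burkholder--Davis--Gundy inequality bounds the martingale by $\vphi^C\,\Im f_0(z_s)/\sqrt{N\h\,\Im m_0(z_s)}$, which is the stated error. The drift errors come out even smaller.

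The main obstacle is that the bounds above use $\Im f_t(z_{s-t})\lesssim \Im f_0(z_s)$, which is itself part of the conclusion. I would close this by a bootstrap. Introduce a stopping time $\tau$ at which the claimed estimate first fails, with a slightly larger power of $\vphi$, for some $(s,z)$ in a fine $N^{-10}$-lattice of spectral parameters in the bulk domain. Up to $\tau$ the errors are controlled as above, so the estimate holds with a smaller constant, forcing $\tau=1$ with overwhelming probability. Lipschitz continuity in $(s,z)$ then extends the bound from the lattice to all $z$ and all $0<s<1$. Care is needed near $\h \sim N^{-1}$, where the bound becomes trivial, and in keeping $z_{s-t}$ in the bulk, which the assumption $E\in[-2+\e,2-\e]$ and $s<1$ guarantee.
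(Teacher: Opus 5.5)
\paragraph{Overall.} The paper gives no proof of this lemma (it is quoted from \cite{BoCi26}). Your route is the characteristics argument of \cite{Bo22} on which the lemma is modeled: It\^o's formula along $t\mapsto f_t(z_{s-t})$, collapse of the drift to $(s_t+\frac w2)\partial_w f_t$ plus the $(\frac2\beta-1)$ term, positivity of $u$ to dominate every error by $\Im f_t(w)/\Im w$, and a stopping-time bootstrap on a lattice. For $N\eta$ larger than a suitable power of $\varphi$ these ingredients do close.

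\paragraph{Bookkeeping.}
\begin{itemize}
\item The collapse holds for the normalization $\frac1N\sum_{\ell\neq k}\frac{u_\ell-u_k}{(x_k-x_\ell)^2}$ of (\ref{e:nonlocal-dyn}). This is what $-\frac{1}{N\beta}\nabla^2\mathcal H$ actually produces, up to a term $-\frac12u_k$ that trades against the factor $e^{-t/2}$ in $f_t$. With the factor $\frac{1}{N\beta}$ printed in (\ref{e:nonlocal}), the terms carrying $(x_k-x_\ell)^{-1}$ do not cancel unless $\beta=1$. So the collapse should be checked, not asserted.
\item The quadratic variation is $\frac{2e^{-t}}{N\beta}\sum_k u_k^2|x_k-w|^{-4}\,dt$, not $\frac{1}{N^2\beta}(\cdots)$. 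Your later integral $\int_0^s\frac{dt}{N\eta_t^2}$ does use the correct power.
\item The relation $\Im f_t(z_{s-t})\approx\Im f_0(z_s)$ is not a comparability of $\Im f_0$ along characteristics. By the flow property $(z_{s-t})_t=z_s$, it is literally the lemma at $(t,z_{s-t})$. That is what makes a bootstrap over a flow-invariant domain of spectral parameters self-contained.
\item $f_s(z)$ is not Lipschitz in $s$. Control the running supremum of the stopped martingale along each lattice characteristic by Doob/BDG, and use continuity only in the spectral variable and the deterministic flow.
\item For $\beta<1$ the paths collide. The It\^o computation passes through individually singular terms, so it needs separate justification there.
\end{itemize}

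\paragraph{The gap: small $\eta$.} To turn the hypothesis $|f_t(w)-f_0(w_t)|\le\varphi^{C+1}\Im f_0(w_t)/\sqrt{N\Im w}$ into $\Im f_t(z_{s-t})\lesssim\Im f_0(z_s)$, you need $N\eta\ge\varphi^{2C+2}$. At $N\eta\asymp1$ no version of the bootstrap closes, since the feedback terms are then as large as the quantity being bootstrapped. This is not a corner case: the paper applies the lemma at $\eta\asymp N^{-1}$.

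Your remark that the bound ``becomes trivial'' there is false. The estimate also controls $\Re f_s$, and $|f_0(z_s)|$ can be much larger than $\Im f_0(z_s)$: for $u(0)=e_a$ they are $|x_a-z_s|^{-1}$ and $\eta_s|x_a-z_s|^{-2}$. So a genuine cancellation is being claimed.

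What is missing is a deterministic comparison with $z'=E+\mathrm{i}\eta'$, $\eta'=\varphi^{C'}/N$, where the bootstrap has already closed.
\begin{itemize}
\item Since $|x-z|\ge\frac{\eta}{\eta'}|x-z'|$ for real $x$, and $u\ge0$,
\[
|f_s(z)-f_s(z')|\le\frac{\eta'}{\eta}\,\Im f_s(z').
\]
\item Since $|z_s-z'_s|\lesssim\eta'\le\Im z'_s$, one has $|x-z_s|\lesssim|x-z'_s|$. This gives $|f_0(z_s)-f_0(z'_s)|\lesssim\frac{\eta'}{\eta}\Im f_0(z_s)$ and $\Im f_0(z'_s)\lesssim\frac{\eta'}{\eta}\Im f_0(z_s)$.
\item Combined with the estimate at $z'$, this yields $|f_s(z)-f_0(z_s)|\lesssim(\eta'/\eta)^2\,\Im f_0(z_s)$. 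That is admissible for $1\le N\eta\le\varphi^{C'}$ after enlarging $C$.
\end{itemize}

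For $\eta\ll N^{-1}$, which the statement literally allows, neither tool works. The characteristics method only bounds the drift errors by $\varphi\,\Im f_0(z_s)/(N\eta)$, which exceeds the target. The comparison loses $(N\eta)^{-2}$ against an allowed $(N\eta)^{-1/2}$. So your argument proves the lemma only for $\eta\gtrsim N^{-1}$, which is, however, the range the paper uses.
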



%

While the estimate along characteristics, Lemma \ref{l:ch-ftz}, will suffice for mesoscopic scales, we will need some control in the submicroscopic regime $N^{-2} \lesssim \h \lesssim N^{-1}$. At the time of writing, it seems that the only way to observe anything below the microscopic scale stems from \cite{Bo22}*{Theorem 3.1}. To state the needed result, consider two copies $(\xx(t))_{t \geq 0}$ and $(\yy(t))_{t \geq 0}$ of the DBM process (\ref{e:dbm-sde}), both driven by the same Brownian motion; their initial conditions will be specified based on context. Define,
\begin{align}
		X(z, t) &:= \frac{1}{N\Im m_{\t{sc}}(z_t)} \l[ \Im \sum_{k=1}^N \log \l( x_k(0) - z_t \r) - N \int \Im \log (x - z_t) \rho_{\t{sc}}(x) dx \r], \\
		Y(z, t) &:= \frac{1}{N\Im m_{\t{sc}}(z_t)} \l[ \Im \sum_{k=1}^N \log \l( y_k(0) - z_t \r) - N \int \Im \log (x - z_t) \rho_{\t{sc}}(x) dx \r].
\end{align}
The following \cite{LaSo22}*{Theorem 2.6} is a reformulation of \cite{Bo22}*{Theorem 3.1}.
\begin{theorem}\label{t:LaSo22-2.6}
		Let $(\xx(t))_{t \geq 0}$ and $(\yy(t))_{t \geq 0}$ each be as above in (\ref{e:dbm-sde}), with independent initial conditions and driven by the same Brownian motion. Fix an $\e, \d, \kappa > 0$. For any $D > 0$ there exists a constant $C > 0$  and $N_0 = N_0(D) > 0$ so that for any $k \in [\![ N ]\!]$ and $E$ such that $E, \ga_k \in (-2 + \kappa, 2 - \kappa)$, it holds
		\begin{equation}\label{e:gap-coupling-N2t}
				\P \l( \l|\l[  x_k(t) - y_k(t) \r]  - \l[ X(E, t) - Y(E, t)\r] \r| > N^\e \frac{1 + N |E - \ga_k|}{N^2 t} \r) \leq C N^{-D},
		\end{equation}
		for every $N > N_0$ and $N^{-1 + \d} < t < 1$.
\end{theorem}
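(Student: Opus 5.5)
The plan is to deduce the statement directly from \cite{Bo22}*{Theorem 3.1}, which gives the coupling estimate at the natural energy $E = \ga_k$, and then move the energy from $\ga_k$ to a general bulk $E$ at the cost of the extra factor $N|E - \ga_k|$.

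\textbf{Step 1 (the case $E = \ga_k$).} I will restate \cite{Bo22}*{Theorem 3.1} in the present normalization. Since the DBM (\ref{e:dbm-sde}) carries the factors $\sqrt{2/(N\b)}$ and $-\tfrac12 x_k\,dt$, the characteristics are the curves $z_t$ of (\ref{e:zt}). The difference $\g u_k(t) = x_k(t) - y_k(t)$ then solves the nonlocal equation (\ref{e:nonlocal}), with coefficients $(x_k - x_\ell)^{-1}(y_k - y_\ell)^{-1}$ in place of $(x_k - x_\ell)^{-2}$.

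Bourgade's argument proceeds as follows:
\begin{itemize}
\item the observable $e^{-t/2}\sum_k \g u_k(t)/(x_k(t) - z)$ is compared along characteristics, as in Lemma \ref{l:ch-ftz};
\item the resulting estimate is localized by a maximum principle on short-range truncations;
\item the initial datum is integrated against the Poisson-type kernel at scale $\Im z_t \asymp t$.
\end{itemize}
Together these give, with overwhelming probability, uniformly in bulk $k$ and $N^{-1+\d} < t < 1$,
\begin{equation*}
  \bigl| [x_k(t) - y_k(t)] - [X(\ga_k, t) - Y(\ga_k, t)] \bigr| \leq \frac{N^{\e}}{N^2 t}.
\end{equation*}
I take this as given from \cite{Bo22}. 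Only the bookkeeping of the $(N,\b)$ scaling and of the $e^{-t/2}$ factors needs checking.

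\textbf{Step 2 (moving the energy).} It remains to bound $|X(E,t) - X(\ga_k,t)|$, and the same for $Y$, by $N^{\e}|E - \ga_k|/(Nt)$. For fixed $t$, the map $E \mapsto z_t(E)$ is smooth on $(-2+\kappa, 2-\kappa)$ with $|\bd_E z_t| \lesssim 1$ and $\Im z_t \asymp t$. Differentiating $X$ in $E$ produces two kinds of terms:
\begin{itemize}
\item the fluctuation $\sum_k (x_k(0) - z_t)^{-1} - N m_{\t{sc}}(z_t)$, divided by $N \Im m_{\t{sc}}(z_t) \asymp N$;
\item the derivative of the prefactor $1/\Im m_{\t{sc}}(z_t)$, multiplied by $X$ itself.
\end{itemize}
The local law at scale $\Im z_t \asymp t \geq N^{-1+\d}$ bounds the fluctuation of the Stieltjes transform by $N^{\e}/(N\Im z_t) \asymp N^{\e}/(Nt)$. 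Rigidity gives $|X| \lesssim N^{\e}/N$, which makes the second kind of term negligible. Hence $|\bd_E X(E,t)| \lesssim N^{\e}/(Nt)$ with overwhelming probability, uniformly on a fine grid in $E$; continuity of $X$ in $E$ then gives uniformity over all bulk $E$. Integrating from $\ga_k$ to $E$ yields the bound $N^{\e}\cdot N|E - \ga_k|/(N^2 t)$.

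\textbf{Step 3 (conclusion).} Combining the two steps by the triangle inequality and adjusting $\e$ gives (\ref{e:gap-coupling-N2t}).

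\textbf{Main obstacle.} The main obstacle is Step 1 itself, that is, verifying that \cite{Bo22}*{Theorem 3.1} applies as stated. This requires the precise error $N^{\e}/(N^2 t)$ rather than a weaker mesoscopic one, with independent initial conditions and an arbitrary $\b > 0$ under the scaling used here. If a direct citation proves insufficient, I would re-run Bourgade's characteristic and maximum-principle argument, using Lemma \ref{l:ch-ftz} for the Stieltjes-transform comparison.
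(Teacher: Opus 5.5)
Your proposal is correct and takes the same route as the paper: the paper gives no independent proof and simply cites the statement as \cite{LaSo22}*{Theorem 2.6}, a reformulation of \cite{Bo22}*{Theorem 3.1}, which is what you do in Step 1. Your Step 2 correctly explains where the extra factor $N|E-\ga_k|$ in that reformulation comes from, a point the paper leaves implicit: you bound $|\partial_E X(E,t)| \lesssim N^{\e}/(Nt)$ using the local law at $\Im z_t \asymp t$ for the Stieltjes-transform term and rigidity ($|X| \lesssim N^{\e}/N$) for the prefactor term.
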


\subsection{Representation for the covariance}%
\label{sub:cov-repr}

Due to ergodicity,
\begin{equation}
		\lim_{t \to \infty} Q_t g = \E_\nu \l[ g \r],
\end{equation}
so that we may decompose the covariance along the dynamics (\ref{e:dbm-sde}),
\begin{equation}
		\Cov_\nu (f, g) = -\int_0^\infty \frac{d}{dt} \E_\nu \l[ f Q_t g \r] = \frac{1}{N\b} \int_0^\infty \E_\nu \l[ \< \V f, \V Q_t g\> \r].
\end{equation}
Then, the Proposition \ref{p:feyn-kac} leads to the following Helffer-Sj\"ostrand representation:
\begin{equation}
		\Cov_\nu(f, g) = \frac{1}{N\b} \int_0^\infty \E_\nu \l[ \V f(\xx) \* \E_{\xx} \l[ \cal U(t) \V g(\xx_t) \r] \r] dt.
\end{equation}
It is this formula which forms the basis of our work.

\begin{proposition}[Representation for the two-point resolvent correlator]
		Let $z, w \in \C \- \R$. Then for any $T > 0$, it holds
		\begin{equation}\label{e:c2-repr}
				\Cov_\nu(s(z), s(w)) = \frac{1}{N^3\b} \int_0^T \sum_{a, b = 1}^N \E_\nu \l[ \frac{1}{(x_a - z)^2} \E_{\xx} \l[ \frac{\g u_b^a(t)}{(x_b(t) - w)^2}  \r]  \r] dt + \Cov_\nu \l( s(z), Q_T [s(w)]  \r).
		\end{equation}
\end{proposition}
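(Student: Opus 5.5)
We apply the covariance decomposition along the dynamics on the finite window $[0,T]$ instead of $[0,\infty)$, and then insert the Feynman--Kac formula of Proposition \ref{p:feyn-kac} for the gradient. Set $f = s(z)$ and $g = s(w)$. These are smooth on the open set where all $x_k \neq x_j$, and they are bounded by $N^{0}/|\Im z|$ and $1/|\Im w|$, uniformly in $\xx \in \R^N$.

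\textbf{Step 1: time-derivative identity.} Since $Q_t$ is self-adjoint in $L^2(\nu)$ with generator $L$, and $Q_0 g = g$, we have
\begin{equation*}
\Cov_\nu(f,g) - \Cov_\nu(f, Q_T g) = \E_\nu[f g] - \E_\nu[f Q_T g] = -\int_0^T \frac{d}{dt}\E_\nu[f Q_t g]\,dt = -\int_0^T \E_\nu[f\, L Q_t g]\,dt .
\end{equation*}
Here we used $\E_\nu[Q_T g] = \E_\nu[g]$, so that $\Cov_\nu(f,Q_Tg) = \E_\nu[fQ_Tg] - \E_\nu f\,\E_\nu g$.

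\textbf{Step 2: integration by parts.} Because $L = \frac{1}{N\b}[\D - \V\cal H\cdot\V]$ and $d\nu \propto e^{-\cal H}$, the Dirichlet form identity gives
\begin{equation*}
-\E_\nu[f L h] = \frac{1}{N\b}\E_\nu[\V f \cdot \V h].
\end{equation*}
No boundary terms arise: the density vanishes like $|x_i - x_j|^\b$ on collision hyperplanes and decays in a Gaussian way at infinity, while $f$, $h = Q_t g$ and their gradients are bounded. The bound on $\V Q_t g$ comes from the Feynman--Kac bound below, since $\cal U(t)$ is a contraction, because $\V^2\cal H \geq 0$ is convex.

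\textbf{Step 3: Feynman--Kac.} By Proposition \ref{p:feyn-kac}, $\V Q_t g(\xx) = \E_\xx[\cal U(t)\, \V g(\xx_t)]$. The derivatives are $\partial_a s(z) = -\frac{1}{N}(x_a - z)^{-2}$ and $\partial_b s(w)(\xx_t) = -\frac1N (x_b(t) - w)^{-2}$, and the entries are $\cal U(t)_{ab} = \g u^a_b(t)$. Substituting these, the two minus signs cancel and the two factors of $\frac1N$ combine with $\frac{1}{N\b}$ to give the prefactor $\frac{1}{N^3\b}$. Fubini then yields (\ref{e:c2-repr}). Note that the bilinear form is real-bilinear extended to complex values, with no conjugation, which matches the definition of $\Cov_\nu$ for complex variables.

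\textbf{Main obstacle.} The main technical point is justifying Step 2 and differentiating under the expectation in Step 1. Both require that $Q_t g$ lie in the domain of $L$ with gradient controlled up to the collision set. This follows from the boundedness of $\GG(t,\cdot)$ via the contraction property of $\cal U$ and the standard well-posedness of DBM for $\b>0$: for $\b<1$ collisions can occur, but the generator is still realized through the Dirichlet form on $L^2(\nu)$. The same identity for general $f,g$ is already stated just before the proposition, so in fact it suffices to truncate it at $T$ and specialize. I expect this regularity check, rather than the algebra, to be the only delicate step.
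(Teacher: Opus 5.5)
Your proof is correct and follows essentially the paper's route. The paper takes the dynamical decomposition $\Cov_\nu(f,g) = \frac{1}{N\beta}\int \E_\nu[\nabla f\cdot \nabla Q_t g]\,dt$ stated just before the proposition, truncates it at $T$ so that $\Cov_\nu(s(z),Q_T[s(w)])$ is the remainder, and inserts the Feynman--Kac formula of Proposition~\ref{p:feyn-kac} with $\partial_a s(z) = -\frac{1}{N}(x_a-z)^{-2}$. Your additional justifications (the contraction property of $\mathcal{U}(t)$ and the Dirichlet-form integration by parts) are more than the paper spells out, but they do not change the argument.
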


\begin{lemma}[Estimate on the covariance]
		Fix a $\kappa > 0$. Let $z, w \in \C \- \R$ and denote $\z = E_\z + \i\h_\z$  for $\z \in \{z, w \}$. Assume $E_z, E_w \in (-2 + \kappa, 2 - \kappa)$ and $\h_z \asymp \h_w \gtrsim N^{-1}$. Then for any $T \lesssim N^{-1}$, it holds
		\begin{equation}\label{e:c2-est}
				\l| \Cov_\nu \l( s(z), s(w) \r) \r| \lesssim  \vphi^C \frac{T}{N^2\b \h_z \h_w \sqrt{N \h_w}} \* \frac{\h_z \w \h_{w_t}}{(E_z - E_w)^2 + (\h_z \w \h_{w_t})^2} + \frac{e^{-cN\b T}}{N^2 \b \h_z \h_w}.
		\end{equation}
		The same estimate holds if we replace either of $s(z)$ or $s(w)$ with their imaginary or real parts.
\end{lemma}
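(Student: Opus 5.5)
We start from the representation (\ref{e:c2-repr}) with the given $T$ and bound its two terms separately.

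\emph{Remainder term.} For $\Cov_\nu(s(z), Q_T[s(w)])$ we use that $L$ has a spectral gap of order $1$ on the nonconstant functions: the Hessian of $\cal H$ is at least $\frac{N\b}{2}\id$, so the Bakry--\'Emery bound gives $\V^2\cal H/(N\b) \geq \frac12$. The same lower bound enters the Feynman--Kac formula (\ref{e:Gtx-FK}) as a contraction $\lV \cal U(t)\rV \le e^{-t/2}$. We need the sharper factor $e^{-cN\b T}$, and this comes from the nonlocal part of the Hessian. On bulk configurations with rigidity, the off-diagonal interaction has entries of size $\frac{1}{N\b}\cdot\frac{\b}{(x_k-x_\ell)^2}\sim N$, so each diagonal entry of $\V^2\cal H/(N\b)$ is of order $N$. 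These diagonal entries dominate the decay of a vector like $\V s(w)$, whose entries $N^{-1}(x_b-w)^{-2}$ are localized near $E_w$.

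I would therefore not rely on the global gap. Instead I write $\Cov_\nu(s(z),Q_Ts(w)) = \frac{1}{N\b}\int_T^\infty \E_\nu[\V s(z)\cdot \V Q_t s(w)]\,dt$, apply Cauchy--Schwarz, and use the a priori bounds $|\V s(\z)|^2 \le \sum_b N^{-2}|x_b-\z|^{-4} \lesssim \vphi^C N^{-1}\h_\z^{-3}$ (from the local law). I would then combine the ordinary spectral gap with the fact that the portion of $\cal U$ acting on the localized gradient decays at rate $N$ for times up to $O(1)$. That produces $e^{-cN\b T}/(N^2\b\h_z\h_w)$ after adjusting constants. If this decay argument is too delicate, the fallback is to estimate the remainder as $|\Cov|\le \Var(s(z))^{1/2}\Var(Q_Ts(w))^{1/2}$, again with the same exponential factor.

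\emph{Main term.} Fix $a$ and $t\le T$. Since $\frac{1}{(x_b(t)-w)^2} = \bd_w \frac{1}{x_b(t)-w}$, the inner sum $\sum_b \g u^a_b(t)(x_b(t)-w)^{-2}$ equals $e^{t/2}\bd_w f_t(w)$, where $f$ is built from the solution $\g u^a$ with initial data $\g u^a_b(0)=\d_{ab}$. Then $f_0(w)=(x_a-w)^{-1}$. By Lemma~\ref{l:ch-ftz}, together with Cauchy's integral formula on a disc of radius $\h_w/2$ to pass to the derivative, we get
\[
\bd_w f_t(w) \approx \bd_w\big[(x_a - w_t)^{-1}\big] + O\Big(\vphi^C \h_w^{-1}\frac{\Im (x_a-w_t)^{-1}}{\sqrt{N\h_w}}\Big).
\]
For $t\lesssim N^{-1}\lesssim \h_w$ the characteristic satisfies $w_t\approx w$, with $\Im w_t \asymp \h_{w_t}\ge \h_w$.

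Summing over $a$ against $N^{-1}(x_a-z)^{-2}$, we then need to bound
\[
\frac{1}{N^2}\sum_a \frac{1}{|x_a-z|^2}\,\frac{\h_{w_t}}{|x_a-w_t|^2}\cdot \frac{1}{\h_w}.
\]
By rigidity this sum becomes an integral against $\rho_{\t{sc}}$. The resulting Poisson-kernel convolution gives
\[
\frac{1}{N\h_z\h_w}\cdot\frac{\h_z\w\h_{w_t}}{(E_z-E_w)^2+(\h_z\w\h_{w_t})^2}
\]
up to constants. Multiplying by $N^{-1}\b^{-1}$, integrating in $t$ over $[0,T]$, and including the $\sqrt{N\h_w}^{-1}$ factor gives the first term of (\ref{e:c2-est}). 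The leading term $\bd_w (x_a-w_t)^{-1}$ is handled by the same size bound, since $\h_w\gtrsim N^{-1}$.

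\emph{Real and imaginary parts.} For these variants we apply the same argument to $\bar z$ and $\bar w$ and use linearity of the covariance.

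\emph{Main obstacle.} I expect the hardest step to be obtaining the $e^{-cN\b T}$ decay for the remainder term, because the uniform spectral gap only yields $O(1)$ decay. Deriving the $N$-rate decay of the localized gradient requires a careful heat-kernel argument, of the kind in \cite{BoCi26}.
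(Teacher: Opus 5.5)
\textbf{Where the proposal breaks: the short-time term.} Your architecture is the paper's: the representation (\ref{e:c2-repr}), then Lemma~\ref{l:ch-ftz} plus rigidity for $t\le T$, then Cauchy--Schwarz for the remainder. Using Cauchy's integral formula to control $\partial_w f_t$ is a legitimate substitute for the paper's device. The paper instead writes $|x_b-w|^{-2}=\eta_w^{-1}\Im (x_b-w)^{-1}$ and uses that $\mathcal U(t)$ has nonnegative entries, which reduces everything to $\Im f_t$.

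The genuine gap is your sentence ``the leading term $\partial_w(x_a-w_t)^{-1}$ is handled by the same size bound, since $\eta_w\gtrsim N^{-1}$.'' The leading term has modulus $\asymp|x_a-w_t|^{-2}$, while the error term is $\vphi^C(N\eta_w)^{-1/2}|x_a-w_t|^{-2}$. The hypothesis $\eta_w\gtrsim N^{-1}$ makes the \emph{error} the smaller of the two, not the leading term.

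If you keep the leading term, your computation only gives $\vphi^C\frac{T}{N^2\beta\eta_z\eta_w}\cdot\frac{\eta_z\wedge\eta_{w_t}}{(E_z-E_w)^2+(\eta_z\wedge\eta_{w_t})^2}$ for the short-time part. That is the first term of (\ref{e:c2-est}) without the factor $(N\eta_w)^{-1/2}$, and it cannot be improved. For instance, for $w=\bar z$ (the variance) and $t\ll\eta_z$, the integrand is to leading order $N^{-3}\beta^{-1}\sum_a|x_a-z|^{-4}\asymp (N^2\beta\eta_z^3)^{-1}$.

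The paper's displayed chain makes the same replacement. It writes $\Im f_t[\mathbf u^a](w)\le \frac{\vphi^C}{\sqrt{N\eta_w}}\Im f_0[\mathbf u^a](w_t)$, whereas Lemma~\ref{l:ch-ftz} only yields $\bigl(1+\frac{\vphi^C}{\sqrt{N\eta_w}}\bigr)\Im f_0[\mathbf u^a](w_t)$. So you are not missing a trick from the paper, but neither argument justifies the gain when $N\eta_w\gg\vphi^{2C}$.

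\textbf{The remainder term: a false mechanism, and what actually saves the statement.} $\mathcal B(t)$ generates a jump process: its order-$N$ diagonal is exactly compensated by the off-diagonal entries, so $\sum_k u_k$ is conserved. A localized vector like $\nabla s(w)$ is therefore not damped at rate $N$. It spreads like a Cauchy kernel, with $\mathcal U(t)\nabla s(w)$ behaving like $\nabla s(w_t)$, where $\Im w_t\approx\eta_w+\pi\rho_{\mathrm{sc}}(E_w)t$. Consequently:
\begin{itemize}
\item $\Cov_\nu(s(z),Q_ts(w))$ relaxes only polynomially, on the time scale $\eta_w$;
\item the Bakry--\'Emery gap $1/2$ is sharp, since $\sum_k x_k$ is an eigenfunction of $L$ with eigenvalue $-1/2$;
\item no heat-kernel argument will produce genuine $e^{-cN\beta T}$ decay.
\end{itemize}

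What makes the statement true is the hypothesis $T\lesssim N^{-1}$, under which $e^{-cN\beta T}\gtrsim 1$. Your fallback then works with no exponential at all. Indeed $\Var_\nu(Q_Ts(w))\le\Var_\nu(s(w))$, and the second-moment local law gives $|\Cov_\nu(s(z),Q_Ts(w))|\lesssim (N^2\eta_z\eta_w)^{-1}$.

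The same observation absorbs the leading term. Write $\eta:=\eta_w\asymp\eta_z$.
\begin{itemize}
\item If $N\eta\le\vphi^{2C}$, the leading term is dominated by the first term of (\ref{e:c2-est}) after enlarging $C$.
\item Otherwise it is $\lesssim\frac{\vphi^C}{N^3\beta\eta^3}\le\frac{1}{N^2\beta\eta^2}$, which is dominated by the second term.
\end{itemize}

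Hence (\ref{e:c2-est}) holds as stated; it even follows from Cauchy--Schwarz applied directly to $\Cov_\nu(s(z),s(w))$. But it holds only because its second term is already the size of that trivial bound. Your argument does not establish the $(N\eta_w)^{-1/2}$-improved first term, and the estimate cannot be used with $T\gg N^{-1}$ to make the remainder negligible.
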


\begin{proof}
		We start by controlling the short-time component in (\ref{e:c2-repr}). Let $0 < t < T$ and write
		\begin{align*}
				e^{-t/2} \l| \sum^{N}_{a,b=1} \E_\nu \l[ \frac{1}{(x_a - z)^2} \E_{\xx} \l[ \frac{\g u_b^a(t)}{(x_b(t) - w)^2} \r] \r] \r| &\leq  \E_{\nu} \l[ \frac{1}{\h_z}\Im \sum^{N}_{a=1}  \frac{1}{x_a - z} \E_{\xx} \l[ \frac{e^{-t/2}}{\h_w}\Im \sum^{N}_{b=1} \frac{\g u_b^a(t)}{x_b(t) - w} \r] \r] \\
																																			   &= \frac{1}{\h_z \h_w} \E_\nu \l[ \Im \sum^{N}_{a=1}  \frac{1}{x_a - z} \E_{\xx} \l[ \Im f_t \l[ \g u^a \r](w) \r] \r] \\
																																			   &\leq \frac{1}{\h_z \h_w} \E_\nu \l[ \Im \sum^{N}_{a=1}  \frac{1}{x_a - z} \E_{\xx} \l[ \frac{\vphi^C}{\sqrt{N \h_w}}\Im f_0 \l[ \g u^a \r](w_t) \r] \r] \\
																																			   &= \frac{\vphi^C}{ \h_z \h_w \sqrt{N \h_w}} \E_\nu \l[ \Im \sum^{N}_{a=1}  \frac{1}{x_a - z} \Im \frac{1}{x_a - w_t} \r],
		\end{align*}
		where we applied Lemma \ref{l:ch-ftz} to get the second inequality. To estimate the expectation, we may apply rigidity to approximate by the convolution of two Cauchy kernels. Precisely, on the rigidity event we have the deterministic estimate (here we need the assumptions $\min(\h_z, \h_w) \gtrsim N^{-1}$, and $T \lesssim N^{-1}$ so that $\h_{w_t} \asymp \h_w \asymp \h_z$),
		\begin{multline*}
				\sum^{N}_{a=1} \frac{\h_z}{(x_a - E_z)^2 + \h_z^2} \frac{\h_{w_t}}{(x_a - E_{w_t})^2 + \h_{w_t}^2} \lesssim \sum_{a : \dist(a, \{E_z, E_{w_t}\}) \geq \max(\h_z, \h_{w_t})} \frac{\h_z}{(\ga_a - E_z)^2 + \h_z^2} \frac{\h_{w_t}}{(\ga_a - E_{w_t})^2 + \h_{w_t}^2} \\
				+ \sum_{a : |x_a - E_z| < \h_z} \frac{1}{\h_z} \frac{\h_{w_t}}{(E_z - E_{w_t})^2 + \h_{w_t}^2} + \sum_{a : |x_a - E_{w_t}| < \h_{w_t}} \frac{\h_z}{(E_z - E_{w_t})^2 + \h_z^2} \frac{1}{\h_{w_t}} \\
				\lesssim N \underbrace{\int \frac{\h_z}{(x - E_z)^2 + \h_z^2} \frac{\h_{w_t}}{(x - E_{w_t})^2 + \h_{w_t}^2} dx}_{\rho_{\t{Cauchy}(E_z, \h_z + \h_{w_t})}(E_w)} + N \frac{\h_z \w \h_{w_t}}{(E_z - E_w)^2 + (\h_z \w \h_{w_t})^2}.
		\end{multline*}
		Off the rigidity event the contribution is killed by the overwhelmingly small probability $N^{-D}$. Altogether, for $T = o(1)$ we have
		\begin{equation}
				\frac{1}{N^3\b} \int_0^T \sum^{N}_{a,b=1}  \E_{\nu} \l[ \frac{1}{(x_a - z)^2} \E_{\xx} \l[ \frac{\g u_b^a(t)}{(x_b(t) - w)^2} \r] \r] \lesssim \vphi^C \frac{T}{N^2\b \h_z \h_w \sqrt{N \h_w}} \* \frac{\h_z \w \h_{w_t}}{(E_z - E_w)^2 + (\h_z \w \h_{w_t})^2}.
		\end{equation}
		As for the long-time part of (\ref{e:c2-repr}), we may apply a (crude) spectral gap estimate together with the second-moment local law estimate \cite{BoMoPa22}*{Theorem 1.1}, for some $c > 0$,
		\begin{equation}
				\Cov_\nu(s(z), P_T \l[ s(w) \r]) \leq \Var_\nu (s(z))^{1/2} \Var_\nu \l( Q_T \l[ s(w) \r] \r)^{1/2} \leq \frac{1}{N^2 \h_z \h_w} e^{-cN\b T}.
		\end{equation}
		Thus, for any $T > \vphi/N$, this remainder term is negligible. For the two-point correlator between the imaginary parts $\Im s$, the proof is the same  once we bound
		\begin{equation}
				\l| \bd_{x} \Im \frac{1}{x - z} \r| \leq \l| \Im \frac{1}{(x - z)^2} \r| \leq \frac{1}{|x - z|^2},
		\end{equation}
		and apply the steps above. The real part is similar.
\end{proof}

\section{CLT for linear statistics}%
\label{sec:clt-lin-stat}

\subsection{Littlewood-Paley decompositions}%
\label{sub:Littlewood-Paley decompositions}

In this section we recall some facts about Littlewood-Paley decompositions, which were applied to random matrix theory in the works \cites{SoWo13, LaSo22} in order to extend the CLT for linear statistics to the regularity class $H^{ \frac{1}{2} + \e}$. Our treatment is essentially the same as the ones given in those works; for proofs of the following statements see the references listed therein. We denote the Fourier transform of $f: \R \to \C$ by
\begin{equation}\label{d:fourier}
		\hat f(\xi) := \int_\R e^{\i \xi x} f(x) dx.
\end{equation}
Given $f \in \cal S$ where $\cal S$ is the space of Schwarz distributions, the inhomogeneous Littlewood-Paley decomposition is the sum,
\begin{equation}
		f = \sum_{\ell = -1}^\infty f_\ell, \quad \t{where } \begin{cases}
				f_{-1} = h * f,& \\
				f_\ell = 2^\ell \om(2^\ell \*) * f & \quad \ell \geq 0,
		\end{cases}	
\end{equation}
where $h$ and $\om$ are smooth functions satisfying the following. The Fourier transforms $\hat h$ and $\hat \om$ are even, smooth, and of compact support. In particular, 
\begin{equation}
		\supp \hat h \subset \{|\xi| \leq 4/3\}, \qquad \supp \hat \om \subset \{3/4 \leq |\xi| \leq 8/3\}.
\end{equation}
Furthermore, $0 \leq \hat h, \hat \om \leq 1$ and it holds,
\begin{equation}
		\hat h(\xi) + \sum_{\ell \geq 0} \hat \om(2^{-\ell} \xi) = 1.
\end{equation}
The Littlewood-Paley decomposition allows for a convenient decomposition of $f$ into convolutions with Poisson kernels at dyadic scales. Namely, recall that the Poisson (or Cauchy) kernel is given by 
\begin{equation}
		P_\h(x) := \frac{1}{\pi} \frac{\h}{x^2 + \h^2} = \frac{1}{\pi}\Im \frac{1}{x - \i \h},
\end{equation}
and its Fourier transform is $\hat P_\h(\xi) = e^{-\h |\xi|}$. Consequently we define,
\begin{equation}
		\hat g_\ell(\xi) = e^{2^{-\ell}|\xi|} \hat f_\ell(\xi).
\end{equation}
The following theorem is from \cite{SoWo13}.
\begin{theorem}\label{t:Hs-L2}
		Let $s \geq 0$ and suppose $f \in H^s$. Then $f$ has the representation
		\begin{equation}\label{e:f-sum-Pg}
				f(x) = \sum^{\infty}_{\ell = -1}  (P_{2^{-\ell}} * g_\ell)(x),
		\end{equation}
		where the sum on the right is convergent in $H^s$. Then there is a constant $C = C(s)$ such that
		\begin{equation}
				\frac{1}{C_s} \lV f \rV_{H^s}^2 \leq \sum^{\infty}_{\ell = -1} 2^{2\ell s} \lV g_\ell \rV_{L^2}^2 \leq C_s \lV f \rV_{H^s}^2.
		\end{equation}
\end{theorem}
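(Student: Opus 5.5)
The plan is to work entirely on the Fourier side, using the Littlewood--Paley construction recalled above. The dyadic pieces satisfy $f_\ell = 2^\ell\om(2^\ell\cdot)*f$, so $\hat f_\ell(\xi) = \hat\om(2^{-\ell}\xi)\hat f(\xi)$ for $\ell\ge0$ and $\hat f_{-1} = \hat h\,\hat f$. Since $\hat h+\sum_{\ell\ge0}\hat\om(2^{-\ell}\cdot)\equiv1$, the series $\sum_\ell \hat f_\ell$ converges pointwise to $\hat f$.

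\textbf{Step 1: the representation (\ref{e:f-sum-Pg}).} Using $\hat P_\h(\xi)=e^{-\h|\xi|}$ and $\hat g_\ell(\xi)=e^{2^{-\ell}|\xi|}\hat f_\ell(\xi)$, we get
\[
\widehat{P_{2^{-\ell}}*g_\ell}(\xi)=e^{-2^{-\ell}|\xi|}\hat g_\ell(\xi)=\hat f_\ell(\xi),
\]
where for $\ell=-1$ we read $P_{2^{-\ell}}$ as $P_2$. Hence $P_{2^{-\ell}}*g_\ell=f_\ell$ exactly, and (\ref{e:f-sum-Pg}) reduces to $f=\sum_\ell f_\ell$ in $H^s$.

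\textbf{Step 2: convergence in $H^s$.} At each $\xi$ only a bounded number of the functions $\hat\om(2^{-\ell}\xi)$ are nonzero, namely those $\ell$ with $3/4\le 2^{-\ell}|\xi|\le 8/3$, which is at most two or three values of $\ell$. The tail $\sum_{\ell>L}\hat f_\ell$ is therefore pointwise bounded by $|\hat f|$ and is supported in $\{|\xi|\ge \tfrac34 2^{L}\}$. Dominated convergence applied to $\int|\hat f|^2(1+|\xi|^{2s})\,d\xi$ then gives convergence in $H^s$.

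\textbf{Step 3: the norm equivalence.} Fix $\ell\ge0$. On $\supp\hat f_\ell$ we have $|\xi|\asymp 2^\ell$, so
\[
e^{2^{-\ell}|\xi|}\in[e^{3/4},e^{8/3}] \quad\text{and}\quad (1+|\xi|^{2s})\asymp_s 2^{2\ell s}.
\]
By Plancherel this yields
\[
2^{2\ell s}\lV g_\ell\rV_{L^2}^2\asymp \int|\hat\om(2^{-\ell}\xi)|^2|\hat f(\xi)|^2(1+|\xi|^{2s})\,d\xi .
\]
For $\ell=-1$ the support is $|\xi|\le 4/3$, where both the exponential weight and $1+|\xi|^{2s}$ are bounded above and below, so the same equivalence holds with $\hat h$ in place of $\hat\om(2^{-\ell}\cdot)$. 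Summing over $\ell$ reduces the claim to the pointwise two-sided bound
\[
c\le \hat h(\xi)^2+\sum_{\ell\ge0}\hat\om(2^{-\ell}\xi)^2\le C .
\]

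\textbf{Step 4: the partition-of-unity squares.} This step needs care, since the squares of a partition of unity need not sum to $1$. The upper bound is immediate: $0\le\hat h,\hat\om\le1$ and at most a bounded number $M$ of terms are nonzero at any $\xi$. For the lower bound, Cauchy--Schwarz gives
\[
1=\Big(\hat h+\sum_{\ell\ge0}\hat\om(2^{-\ell}\cdot)\Big)^2\le M\Big(\hat h^2+\sum_{\ell\ge0}\hat\om(2^{-\ell}\cdot)^2\Big).
\]
The constants then depend only on $s$ and the fixed choice of $h,\om$, which completes the plan.
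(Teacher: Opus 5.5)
Your proof is correct and complete. It uses the exact identity $P_{2^{-\ell}}*g_\ell=f_\ell$, the two-sided bounds on $e^{2^{-\ell}|\xi|}$ and $(1+|\xi|^{2s})2^{-2\ell s}$ on each dyadic annulus, and the bounded-overlap plus Cauchy--Schwarz control of $\hat h^2+\sum_{\ell\ge 0}\hat\omega(2^{-\ell}\cdot)^2$; the paper gives no proof of its own and simply defers to \cite{SoWo13}, where this same standard Fourier-side Littlewood--Paley argument is what underlies the result.
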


It will also be useful for us to record a decay estimate on the components, \cite{LaSo22}*{Proposition 2.9}.

\begin{proposition}\label{p:LP-decay}
		Let $f \in H^s$ be supported in $(-2 + \kappa, 2 - \kappa)$. Let $|x| > 2 - \kappa$. Then, for every $M > 0$ there is a $C_M > 0$ so that for all $\ell \geq 0$,
		\begin{equation}
				|f_\ell(x)| \leq C_M \frac{2^\ell}{1 + 2^{\ell M} \l| |x| - (2 - \kappa) \r|^M} \lV f \rV_{L^2}.
		\end{equation}
		The same estimate holds for $g_\ell(x)$.
\end{proposition}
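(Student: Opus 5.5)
The plan is to write each Littlewood--Paley piece as a convolution of $f$ with a rescaled Schwartz kernel. Then I use the fact that the evaluation point $x$ lies at distance at least $d := |x| - (2-\kappa) > 0$ from $\supp f$: the rapid decay of the kernel gives the factor in the bound, and the compact support of $f$ converts $\lV f\rV_{L^1}$ into $\lV f \rV_{L^2}$.

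\emph{Step 1: $f_\ell$.} Since $\hat\om$ is smooth and compactly supported, $\om \in \cal S$. Hence for every $M>0$ there is $C_M$ with $|\om(u)| \le C_M (1+|u|)^{-M}$. For $\ell \ge 0$,
\begin{equation*}
	f_\ell(x) = \int_{-2+\kappa}^{2-\kappa} 2^\ell \om\big(2^\ell (x-y)\big) f(y)\, dy .
\end{equation*}
For $y \in \supp f$ and $|x| > 2-\kappa$, we have $|x-y| \ge d$, so
\begin{equation*}
	|f_\ell(x)| \le C_M \frac{2^\ell}{(1+2^\ell d)^M} \int |f| \le C_M \frac{2^\ell}{(1+2^\ell d)^M}\, 2\, \lV f\rV_{L^2}.
\end{equation*}
The last inequality is Cauchy--Schwarz on an interval of length at most $4$. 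Finally, $(1+a)^M \ge \tfrac12 (1+a^M)$ for $a \ge 0$ (for $M\ge1$; for $M<1$ adjust the constant), which gives the claimed form with $a = 2^\ell d$.

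\emph{Step 2: $g_\ell$.} By definition,
\begin{equation*}
	\hat g_\ell(\xi) = e^{2^{-\ell}|\xi|}\, \hat\om(2^{-\ell}\xi)\, \hat f(\xi) = \hat{\tilde\om}(2^{-\ell}\xi)\, \hat f(\xi), \qquad \hat{\tilde\om}(\xi) := e^{|\xi|}\hat\om(\xi).
\end{equation*}
The one point needing care is that $e^{|\xi|}$ is not smooth at $0$. However, $\hat\om$ is supported in $\{3/4 \le |\xi| \le 8/3\}$, away from the origin, so $\hat{\tilde\om}$ is smooth and compactly supported. Hence $\tilde\om \in \cal S$ and $g_\ell = 2^\ell\tilde\om(2^\ell\cdot) * f$. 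Step 1 then applies verbatim with $\om$ replaced by $\tilde\om$; the constants $C_M$ depend only on $\om$ and $M$, not on $\ell$.

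I expect no real obstacle here. The only subtle points are that the statement is restricted to $\ell \ge 0$, and that the smoothness of $e^{|\xi|}\hat\om$ relies on $\hat\om$ vanishing near $0$. For $\ell=-1$ the kernel $e^{|\xi|}\hat h$ would not be smooth, which is consistent with its exclusion from the statement.
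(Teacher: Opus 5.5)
Your proof is correct and follows the standard argument behind the cited \cite{LaSo22}*{Proposition 2.9}; the paper gives no proof of its own. You write $f_\ell$ and $g_\ell$ as convolutions of $f$ with the Schwartz kernels $2^\ell\omega(2^\ell\cdot)$ and $2^\ell\tilde\omega(2^\ell\cdot)$, where $\hat{\tilde\omega}=e^{|\xi|}\hat\omega$ is smooth because $\hat\omega$ vanishes near the origin, and then combine rapid decay, the separation $|x-y|\geq |x|-(2-\kappa)$ on the support of $f$, and Cauchy--Schwarz on an interval of length at most $4$. (One minor point: $(1+a)^M\geq\max(1,a^M)\geq\frac12(1+a^M)$ holds for every $M>0$, so no adjustment is needed when $M<1$.)
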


\subsection{Bound on the variance of linear statistics}%
\label{sub:var-lin-stat}

In this section, we prove the main result, Theorem \ref{t:var-H1/2}, which we recall says that for any $\e, \kappa > 0$ and bulk observable $f: \R \to \R$ supported in $(-2 + \kappa, 2 - \kappa)$, there is a $C = C(\e) > 0$ such that
\begin{equation}
		\Var_\nu \l( \sum^{N}_{k=1} f(x_k) \r) \leq C \lV f \rV_{H^{ \frac{1}{2} + \e}}^2.
\end{equation}
Note that from the Littlewood-Paley decomposition Theorem \ref{t:Hs-L2} and Cauchy-Schwarz, it actually suffices\footnote{In \cite{LaSo22} it is explained why we may consider only logarithmically many $g_\ell$ in a range $N^{1/A} \leq 2^\ell \leq N^A$. Basically, for very small frequencies $2^{\ell} < N^{1/A}$ we have a function $f_\ell$ whose $C^5$ norm is sufficiently small, and so may be handled by existing techniques (e.g. loop equations). On the other hand, the trivial estimate $\Var(\sum_{k=1}^N f(x_k)) \leq N^2 \lV f \rV_\infty^2$ and the embedding of $H^{1/2 + \g a}$ into $L^\infty$ are enough to handle extremely large frequencies $2^{\ell} > N^A$ (say $A > 10^5$).}, for some $A > 1$,  to prove for each $A^{-1} \log N \lesssim \ell \lesssim A \log N$ the bound 
\begin{equation}\label{e:var-gl}
		\Var_\nu \l( \sum^{N}_{k=1} \underbrace{(P_{2^{-\ell}} * g_\ell)}_{f_\ell}(x_k) \r) \leq C N^{\e} 2^{ \ell} \lV g_\ell \rV_{L^2}^2
\end{equation}
for some $C = C(\e)$ not depending on $\ell$. We split our analysis into three cases:
\begin{enumerate}[(1)]
		\ii The mesoscopic (and microscopic) regimes $N^{-1} \lesssim 2^{-\ell} \lesssim N^{-c}$;
		\ii The submicroscopic regime $N^{-2+ \d_1} \lesssim 2^{-\ell} \lesssim N^{-1}$;
		\ii The very high frequency regime $N^{-C} \lesssim 2^{-\ell} \lesssim N^{-2 + \d_1}$.
\end{enumerate}
Above, the constant $\d_1 > 0$ is chosen sufficiently small depending on the $\g a > 0$ such that $f \in H^{1/2 + \g a}$. We will treat regimes (2) and (3) in the same way as in \cite{LaSo22}. The main novelty of our paper is that regime (1) is made simpler from our Helffer-Sj\"ostrand two-point estimate (\ref{e:c2-est}). 

\begin{remark}
		If we could improve our two-point estimate (\ref{e:c2-est}) to 
		\begin{equation}\label{conj:c2-est}
				N^2 \l| \Cov_\nu(s(z), s(w)) \r| \lesssim \vphi^C \frac{1}{(E_z - E_w)^2 + (\h_z \w \h_w)^2},
		\end{equation}
		i.e. remove all deteriorating factors of $ \frac{1}{N\h}$, then the estimate on the covariance would actually suffice in all three regimes (1)-(3). From \cite{LaSo22}*{Proposition 8.4} the improved estimate (\ref{conj:c2-est}) indeed holds for the GUE (up to an $N^\e$ factor). However, it is unclear if it is true for general $\b$ or potential $V$.
\end{remark}

%

We will handle regimes (2) and (3) in \S\ref{sub:regime2} and \S\ref{sub:regime3}, respectively. Assuming these, we may now complete the proof of our main theorem, i.e. the following proof handles the remaining mesoscopic regime (1):

\begin{proofof}{Theorem \ref{t:var-H1/2}}
		Expanding, we have
		\begin{align}
				\Var_\nu \l( \sum^{N}_{k=1} (P_{2^{-\ell}} * g_\ell)(x_k) \r) &= \frac{N^2}{\pi^2} \Var_\nu \l(   \int  g_\ell(E) \Im s(E + \i 2^{-\ell}) dE  \r) \\
																			  &= \frac{N^2}{\pi^2} \int_\R \int_\R g_\ell(E_z) g_\ell(E_w) \Cov_\nu\l(\Im s(E_z + \i 2^{-\ell}), \Im s(E_w + \i 2^{-\ell})\r) dE_z dE_w. \label{pf:var-pl}
		\end{align}
		Applying Young's inequality and our two-point estimate (\ref{e:c2-est}) gives the bulk estimate
		\begin{align*}
				&\frac{N^2}{\pi^2} \int_{-2+\kappa/2}^{2-\kappa/2} \int_{-2+\kappa/2}^{2 - \kappa/2} g_\ell(E_z) g_\ell(E_w) \Cov_\nu\l(\Im s(E_z + \i 2^{-\ell}), \Im s(E_w + \i 2^{-\ell})\r) dE_z dE_w \\
				&\lesssim \vphi^C \int_{-2+\kappa/2}^{2-\kappa/2} \int_{-2+\kappa/2}^{2-\kappa/2} g_\ell(E_z)^2 \frac{ \frac{1}{(N 2^{-\ell})^{3/2}}}{(E_z - E_w)^2 + (2^{-\ell})^2} dE_z dE_w \lesssim \vphi^C \frac{2^{\ell}}{(N 2^{-\ell})^{3/2}} \lV g_\ell \rV_{L^2}^2.
		\end{align*}
		For the edge contributions, we use Proposition \ref{p:LP-decay} to estimate, by Cauchy-Schwarz and the crude bound $|\Im s(E + \i 2^{-\ell})| \leq 2^\ell$,
		\begin{align*}
				&\frac{N^2}{\pi^2} \int_{[-2+\kappa/2,2-\kappa/2]^c}\int_{[-2+\kappa/2,2-\kappa/2]^c} g_\ell(E_z) g_\ell(E_w) \Cov_\nu\l(\Im s(E_z + \i 2^{-\ell}), \Im s(E_w + \i 2^{-\ell})\r) dE_z dE_w \\
				&\lesssim C_M^2 N^2 \lV f \rV_{L^2}^2 \int_{2-\kappa/2}^\infty \int_{2 - \kappa/2}^\infty \l( \frac{2^\ell}{1 + 2^{\ell M} \l| |E_z| - (2 - \kappa)\r|^M } \r) \\
				&\qquad\qquad\qquad\qquad\qquad\qquad \x \l( \frac{2^\ell}{1 + 2^{\ell M} \l| |E_w| - (2 - \kappa)\r|^M } \r) \Var_\nu \l( \Im s(E_z + \i 2^{-\ell}) \r) dE_z dE_w\\
				&\lesssim C_M^2 N^2 2^{4\ell}  \lV f \rV_{L^2}^2 \l( \int_{2-\kappa/2}^\infty \frac{1}{1 + 2^{\ell M} \l| |E| - (2 - \kappa) \r|^M} dE \r)^2 \\
				&\lesssim C_M' N^2 2^{-2\ell (M - 2)} \lV f \rV_{L^2}^2.
		\end{align*}
		Since $\ell \gtrsim \log_2(N)$, we may choose $M > 100$ (say) to make this term negligible. Thus, for scales down to the microscopic scale $2^{\ell} \gtrsim 1/N$, the bound (\ref{e:var-gl}) holds. 

		Now let us consider submicroscopic scales $N^{-2 + \d_1} \lesssim 2^{-\ell} \lesssim N^{-1}$. From Propositions \ref{p:submicro-close} \& \ref{p:submicro-edge} it is clear that we may restrict only to those pairs of bulk indices which are far, i.e. $(a, b) \in I_\a^2$ with
		\begin{equation}
				|a - b| > \frac{N^{\d_1}}{N\h}.
		\end{equation}
		For these indices, we first apply Lemma \ref{l:cov-stab-submicro} to replace our particles $x_a$ with randomly shifted versions $\tilde \Xi_a = \Xi_a + Z_a$, with small error term
		\begin{equation}
				\frac{N^{3\e + \d_2}}{\h} \lV g_\ell \rV_2^2,
		\end{equation}
		followed by Lemma \ref{l:cov-Xi-tiny} to bound the associated covariances of the $\tilde\Xi_a$'s, which have an even smaller error term $N^{-100} \lV g_\ell \rV_2^2$ (at scales $\h \gg N^{-2}$). 

		Finally, for even smaller scales $N^{-A} \lesssim \h \lesssim N^{-2 + \d_1}$ we may simply apply Propositions \ref{p:submicro-close} \& \ref{p:submicro-edge} for which $ \frac{N^{\d_1}}{N\h} > N$ so the sum now covers all pairs $(a,b) \in [\![N ]\!]$. Thus, for all submicroscopic scales we have
		\begin{equation}
				\Var \l( \sum_{a=1}^N f_\ell(x_a) \r) \lesssim N^{3\e + \d_1 + \d_2} 2^{\ell} \lV f_\ell \rV_2^2 + 2^{-M\ell} N^2 \lV f \rV_2^2,
		\end{equation}
		and since $\ell \geq \log_2(N)$ we may choose $M > 100$ (say) to absorb the $N^2$ into the $2^{-M\ell}$. Summing over $\ell$ and applying (\ref{e:f-sum-Pg}) yields (\ref{e:var-H1/2}).
\end{proofof}

\subsection{Submicroscopic scales}%
\label{sub:regime2}

In this section, we fix a $\d_1 > 0$ and $\h = 2^{-\ell}$ satisfying
\[
		\frac{N^{\d_1}}{N^2} \lesssim \h \lesssim \frac{1}{N}.
\]
The main idea, as in \cite{LaSo22}, is to realize each $x_a$ via a decomposition
\begin{equation}\label{a:xa-decomp}
		x_a \approx \Xi_a + Z_a,
\end{equation}
where $(Z_a, Z_b)$ is some (jointly) Gaussian noise independent of $(\Xi_a, \Xi_b)$ for each pair of bulk indices $(a, b) \in I_\a^2$. Then, by Fourier representation, 
\begin{align}
		\E_\nu \l[ f_\ell(x_a) f_\ell(x_b) \r] &\approx \E_\nu \l[ \int \int \hat f_\ell(\xi_1) \hat f_\ell(\xi_2) e^{\i \xi_1 \l(\Xi_a + Z_a\r) + \i \xi_2 \l( \Xi_b + Z_b \r)} d\xi_1 d\xi_2 \r] \label{a:xa-decomp-Cov} \\
											   &= \int \int \hat f_\ell(\xi_1) \hat f_\ell(\xi_2) \E_\nu \l[ e^{\i \l( \xi_1 \Xi_a + \xi_2 \Xi_b \r)} \r] \* \E_\nu \l[ e^{\i \l( \xi_1 Z_a + \xi_2 Z_b \r) } \r] d\xi_1 d\xi_2.
\end{align}
Now, since $\hat f_\ell$ is only supported on frequencies $|\xi| \gtrsim N$, the above integral will be small if the joint Fourier transform,
\begin{equation}
		\E_\nu \l[ e^{\i \l( \xi_1 Z_a + \xi_2 Z_b \r)} \r],
\end{equation}
decays fast enough. As $(Z_a, Z_b)$ are Gaussian, this amounts to controlling their variance and covariance.

The approximation (\ref{a:xa-decomp}) relies on the homogenization estimate Theorem \ref{t:LaSo22-2.6} and a clever pairwise coupling procedure from \cite{LaSo22}. In particular, fix a pair $(a, b) \in [\![N]\!]^2$. Let $\a, \d_1 > 0$ be fixed and we may focus on treating the contribution from pairs that are both in the bulk $I_\a^2$ and satisfying
\begin{equation}
		|a - b| > \frac{N^{\d_1}}{N\h}.
\end{equation}
Those pairs either not in the bulk or that are close together may be estimated identically as in \cite{LaSo22}*{Propositions 4.1 \& 4.2}. Let $(\xx(t))_{t \geq 0}$ and $(\yy(t))_{t \geq 0}$ be two DBM processes (\ref{e:dbm-sde}) each at equilbrium. However, instead of coupling them with the same Brownian motion for all times $t \geq 0$, we fix a time $T_{ab} > 0$ (to be specified later) and let $(\xx(t))_{0 \leq t \leq T_{ab}}$ and $(\yy(t))_{0 \leq t \leq T_{ab}}$ be driven by \emph{independent} Brownian motions $B^{\xx}$ and $B^{\yy}$,
\begin{equation}
		(B^{\xx}(t))_{0 \leq t \leq T_{ab}} \indep (B^{\yy}(t))_{0 \leq t \leq T_{ab}}.
\end{equation}
Then, for times $t > T_{ab}$ we again enforce that $(\xx(t))_{t \geq T_{ab}}$ and $(\yy(t))_{t \geq T_{ab}}$ are driven by the \emph{same} Brownian motion $B = (B_t)_{t \geq T_{ab}}$. We also specify that the initial conditions $\xx(0)$ and $\yy(0)$ start as independent. Note that the three families (among other combinations) of random variables,
\begin{equation}
		( \yy(T_{ab} + t))_{t > 0}, \qquad \xx(0), \qquad (B^{\xx}(t))_{0 \leq t \leq T_{ab}},
\end{equation}
are mutually independent. 

By Theorem \ref{t:LaSo22-2.6} we have for any $\e > 0$ that with overwhelming probability,
\begin{equation}
		\l|\l[ x_a(T_{ab} + t) - y_a(T_{ab} + t)\r]  - \l[ X(\ga_a, t) - Y(\ga_a, t) \r]  \r| \leq \frac{N^\e}{N^2 t},
\end{equation}
where we redefine $X$ and $Y$ to account for time ``starting'' at $T_{ab}$,
\begin{align}
		X(z, t) &:= \frac{1}{N \Im m_{\t{sc}}(z_t)} \l[ \Im \sum_{k=1}^N \log \l( x_k(T_{ab}) - z_t \r) - N \int \Im \log (x - z_t) \rho_{\t{sc}}(x) dx \r], \\
		Y(z, t) &:= \frac{1}{N \Im m_{\t{sc}}(z_t)} \l[ \Im \sum_{k=1}^N \log \l( y_k(T_{ab}) - z_t \r) - N \int \Im \log (x - z_t) \rho_{\t{sc}}(x) dx \r].
\end{align}
Now, we may further break down $X(\ga_a, t)$ via the method of characteristics on the range $[0, T_{ab}]$; by \cite{LaSo22}*{Proposition 7.2} it holds with overwhelming probability,
\begin{equation}
		X(\ga_a, t) = \tilde X_a(t) + Z_a(t) + O \l( \frac{N^\e}{N^2 t} \r),
\end{equation}
where
\begin{align}
		\tilde X_a(t) &:= \frac{1}{N \Im \l[ m_{\t{sc}} \l( (\ga_a)_{t} \r) \r]} \l[ \Im \sum^{N}_{k=1} \log \l( x_k(0) - (\ga_a)_{T_{ab} + t} \r) - N \int \Im \log (x - (\ga_a)_{T_{ab} + t}) d\rho_{\t{sc}}(x)\r] \\
					  &\quad+ \frac{1}{2} \l( 1 - \frac{2}{\b} \r) \int_0^{T_{ab}} \Im \l[ m_{\t{sc}}'\l((\ga_a)_{t + s}\r) \r] ds,
\end{align}
and
\begin{align}
		Z_a(t) := \frac{1}{N} \sqrt{ \frac{2}{N\b} } \sum^{N}_{k=1} \int_0^{T_{ab}} \Im \frac{1}{\ga_k - (\ga_a)_{T_{ab} + t - s}} dB^{\xx}_k(s).
\end{align}
Similar definitions and approximations can be made by swapping $a$ with $b$. Throughout we will hide the dependence on the pair $(a, b)$ in the notation and in the following proofs we make the assumption that all quantities as defined above depend on the pairwise coupling. The following is a restatement of \cite{LaSo22}*{Proposition 4.3} in our notation.

\begin{proposition}\label{p:x-Xi-Z}
		Let $\e > 0$. With overwhelming probability, we have (and the analogous estimate for $x_b(T_{ab} + t)$),
		\begin{equation}\label{e:x-Xi-Z}
				\l| x_a(T_{ab} + t) - \l( \Xi_a(t) + Z_a(t) \r) \r| \leq \frac{N^\e}{N^2 t},
		\end{equation}
		where
		\begin{equation}
				\Xi_a(t) := y_a(T_{ab} + t) - Y(\ga_a, t) + \tilde X_a(t).
		\end{equation}
		The families of random variables,
		\begin{equation}
				(Z_a(t), Z_b(t))_{t \geq 0}, \qquad (\Xi_a(t), \Xi_b(t))_{t \geq 0},
		\end{equation}
		are independent, and the $(Z_a, Z_b)$ are jointly Gaussian. Moreover, we have the bounds
		\begin{equation}
				\Var(Z_a), \Var(Z_b) \asymp (\d_2 - \d_3) \frac{\log N}{N^2},
		\end{equation}
		and
		\begin{equation}
				\Cov(Z_a, Z_b) \lesssim \frac{1}{N^2\log N}.
		\end{equation}
\end{proposition}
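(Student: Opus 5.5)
The estimate (\ref{e:x-Xi-Z}) comes from chaining two approximations, and then checking independence and the Gaussian statistics by construction. First, on the interval $[T_{ab}, T_{ab}+t]$ the processes $\xx$ and $\yy$ are driven by the same Brownian motion. Theorem \ref{t:LaSo22-2.6}, applied with time origin $T_{ab}$ and $E = \ga_a$ (so that $N|E - \ga_a| = 0$), gives with overwhelming probability
\[
x_a(T_{ab}+t) = y_a(T_{ab}+t) + X(\ga_a,t) - Y(\ga_a,t) + O\l(\frac{N^\e}{N^2 t}\r).
\]
This uses the stationarity of both processes at time $T_{ab}$, so that the initial conditions there are distributed as $\nu$ and rigidity holds. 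The independence of $\xx(T_{ab})$ and $\yy(T_{ab})$ needed in the theorem follows from the independent initial data and the independent driving noises on $[0,T_{ab}]$. Second, I substitute the expansion $X(\ga_a,t) = \tilde X_a(t) + Z_a(t) + O(N^\e/(N^2t))$ from \cite{LaSo22}*{Proposition 7.2}. This is the characteristic analysis of $\Im\sum_k \log(x_k(s) - z)$ along $z_s$ over $[0,T_{ab}]$, in which the martingale term is linearized at the typical locations $\ga_k$. Combining the two expansions and regrouping $y_a - Y + \tilde X_a =: \Xi_a$ gives (\ref{e:x-Xi-Z}); a union bound handles the analogous estimate for $b$.

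For independence, note that $Z_a(t)$ and $Z_b(t)$ are Wiener integrals of the deterministic integrands $\Im(\ga_k - (\ga_{a})_{T_{ab}+t-s})^{-1}$ against $B^{\xx}|_{[0,T_{ab}]}$. They are therefore jointly Gaussian and measurable with respect to $B^{\xx}|_{[0,T_{ab}]}$. On the other side, $\Xi_a$ depends only on $\yy(T_{ab}+\cdot)$, on $Y$ (a function of $\yy(T_{ab})$), and on $\tilde X_a$ (a function of $\xx(0)$ plus a deterministic drift). The mutual independence of the three families recorded before the statement then gives the independence claim.

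The variance and covariance bounds are explicit deterministic computations, which I would organize as follows. By the Itô isometry,
\[
\Var(Z_a) = \frac{2}{\b N^3}\sum_k \int_0^{T_{ab}} \l(\Im \frac{1}{\ga_k - (\ga_a)_{T_{ab}+t-s}}\r)^2 ds .
\]
I replace the sum by $N\int \rho_{\t{sc}}$, which is valid since $\Im (\ga_a)_u \asymp u \gg N^{-1}$. The inner sum is then $\asymp N/\Im(\ga_a)_u \asymp N/u$, and integrating $du/u$ over $u \in [t, T_{ab}+t]$ gives $N^{-2}\log(T_{ab}/t)$. With $T_{ab} = N^{-\d_3}$ and $t = N^{-\d_2}$ (the choices implicit in the statement), this is $(\d_2-\d_3)\log N/N^2$. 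For the covariance the same computation applies with the product of two Poisson kernels centred at $\ga_a$ and $\ga_b$; their overlap is $\lesssim u/(|\ga_a-\ga_b|^2+u^2)$. The separation $|\ga_a - \ga_b| \gtrsim |a-b|/N$ then bounds the integral by $N^{-2}$ times $\int du\, u/(d^2+u^2)$, which is $O(1)$ rather than $\log N$. The main obstacle is getting the stated $1/(N^2\log N)$ gain precisely. To get it I would tune $T_{ab}$ to the separation, taking $T_{ab} \lesssim |\ga_a-\ga_b|/\log N$, so that the overlap integral is $\lesssim T_{ab}^2/d^2 \le (\log N)^{-1}$. I would verify that this choice is compatible with $T_{ab} \gg t$ given the assumption $|a-b| > N^{\d_1}/(N\h)$.
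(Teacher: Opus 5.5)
Your chaining argument and your independence/Gaussianity argument are exactly what the paper imports from \cite{LaSo22}*{Proposition 4.3}, and that part is fine. The chaining applies Theorem \ref{t:LaSo22-2.6} from time origin $T_{ab}$ with $E=\gamma_a$, then \cite{LaSo22}*{Proposition 7.2}, then regroups into $\Xi_a$; the second argument reads independence and Gaussianity off the coupling construction.

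The gap is the choice of times in the variance/covariance computation. They are not pair-independent: the paper fixes, in Lemma \ref{l:cov-stab-submicro}, $t=t_{ab}=N^{-\delta_2}|\gamma_a-\gamma_b|$ and $T_{ab}=N^{-\delta_3}|\gamma_a-\gamma_b|$. With your choice $T_{ab}=N^{-\delta_3}$, $t=N^{-\delta_2}$, the claim that $\int_t^{T_{ab}+t}u\,(d^2+u^2)^{-1}\,du=O(1)$, with $d=|\gamma_a-\gamma_b|$, is false.

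Consider pairs near the threshold $|a-b|\approx N^{\delta_1}/(N\eta)$ with $\eta\approx N^{-1}$. There $d\approx N^{\delta_1-1}\ll t$, so the integrand is $\approx u^{-1}$ on the whole range. This gives $\Cov(Z_a,Z_b)\asymp(\delta_2-\delta_3)\log N/N^2\asymp\Var(Z_a)$. The two Gaussians are then almost perfectly correlated, which is exactly what the Fourier-decay argument of Lemma \ref{l:cov-Xi-tiny} cannot tolerate.

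Your repair $T_{ab}\lesssim d/\log N$ points the right way but is incomplete. If you keep $t=N^{-\delta_2}$, then $t\gg T_{ab}$ for such pairs. Then $\Var(Z_a)\approx N^{-2}\log(1+T_{ab}/t)\approx N^{-2}T_{ab}/t$, which loses the claimed $\log N$ size. The ratio $T_{ab}/t$ must stay $N^{\delta_2-\delta_3}$, so $t$ has to scale with $d$ as well.

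Taking both times proportional to $d$, as the paper does, settles everything at once:
\begin{itemize}
\item The variance is $\asymp N^{-2}\log(T_{ab}/t_{ab})=(\delta_2-\delta_3)\log N/N^2$.
\item Since $T_{ab}=N^{-\delta_3}d\ll d$, the covariance is $\lesssim N^{-2}T_{ab}^2/d^2=N^{-2-2\delta_3}$. This polynomial gain is far stronger than the stated $1/(N^2\log N)$, so no $\log N$-tuning is needed.
\end{itemize}

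The separation hypothesis $|a-b|>N^{\delta_1}/(N\eta)$, together with $\delta_2<\delta_1$, is what makes this choice admissible, and you should verify it explicitly. It gives $t_{ab}\gtrsim N^{\delta_1-\delta_2}/(N^2\eta)\geq N^{\delta_1-\delta_2-1}$. This bound does three things:
\begin{itemize}
\item It supplies the condition $t>N^{-1+\delta}$ required by Theorem \ref{t:LaSo22-2.6}.
\item It justifies your Riemann-sum step, since $\Im(\gamma_a)_u\gg N^{-1}$ for $u\geq t_{ab}$.
\item It makes the error $N^{\varepsilon}/(N^2t_{ab})\leq N^{\varepsilon-\delta_1+\delta_2}\eta\ll\eta$ for $\varepsilon<\delta_1-\delta_2$, which is how the proposition is used afterwards.
\end{itemize}
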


The following proposition justifies the approximation (\ref{a:xa-decomp-Cov}). The proof is identical to that of \cite{LaSo22}*{Proposition 4.4} but we include it here for completeness since technically speaking their result is stated for Gaussian divisible ensembles whereas we state ours for general (Gaussian) $\b$-ensembles.

\begin{lemma}[Stability of covariance for $N^{-2} \lesssim \h \lesssim N^{-1}$]\label{l:cov-stab-submicro}
		Let $0 < \d_3 < \d_2 < \d_1$ and we set
		\begin{equation}
				t_{ab} = N^{-\d_2} |\ga_a - \ga_b|, \quad T_{ab} = N^{-\d_3} |\ga_a - \ga_b|.
		\end{equation}
		Then for any $g \in L^2(\R)$ it holds
		\begin{align}
				&\sum_{(a, b) \in I_\a^2} \1_{|a - b| > \frac{N^{\d_1}}{N \h}}\l| \int_{\R^2} g(E_z) g(E_w) \Cov \l( \Im \frac{1}{x_a - z}, \Im \frac{1}{x_b - w} \r) dE_z dE_w \r. \\
				&\quad - \l.\int_{\R^2} \Cov \l( \Im \frac{1}{\Xi_a + Z_a - z}, \Im \frac{1}{\Xi_b + Z_b - w} \r) dE_z dE_w \r| \lesssim \frac{N^{3\e + \d_2}}{\h} \lV g \rV_2^2
		\end{align}
		where we denoted $z = E_z + \i\h$ and $w = E_w + \i\h$ above.
\end{lemma}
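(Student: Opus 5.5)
By stationarity of the DBM started at $\nu$, I replace $x_a, x_b$ by $x_a(T_{ab}+t_{ab})$ and $x_b(T_{ab}+t_{ab})$. I then approximate these by $\Xi_a(t_{ab})+Z_a(t_{ab})$ and $\Xi_b(t_{ab})+Z_b(t_{ab})$, using Proposition \ref{p:x-Xi-Z} at $t=t_{ab}$. The cost of this replacement is controlled by a Lipschitz bound on $x\mapsto \Im (x-z)^{-1}$ integrated against $g$.

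\textbf{Step 1: the function being differenced.} Write $\phi(x) := \int g(E)\,\Im \frac{1}{x-E-\i\h}\,dE = \pi (P_\h * g)(x)$. Both covariances in the statement are then $\Cov(\phi(x_a),\phi(x_b))$, resp.\ $\Cov(\phi(\tilde\Xi_a),\phi(\tilde\Xi_b))$ with $\tilde\Xi = \Xi+Z$. Since $\|\phi'\|_\infty \lesssim \h^{-3/2}\|g\|_2$ and $\|\phi\|_\infty\lesssim \h^{-1/2}\|g\|_2$ (Cauchy--Schwarz with $\|P_\h'\|_2\sim \h^{-3/2}$), I get
\[
|\phi(x_a)\phi(x_b)-\phi(\tilde\Xi_a)\phi(\tilde\Xi_b)| \lesssim \h^{-2}\|g\|_2^2\bigl(|x_a-\tilde\Xi_a|+|x_b-\tilde\Xi_b|\bigr).
\]
The same bound applies to the product of expectations. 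Off the overwhelming-probability event I use the crude bound $\h^{-1}\|g\|_2^2 N^{-D}$.

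\textbf{Step 2: size of the per-pair error.} Proposition \ref{p:x-Xi-Z} gives
\[
|x_a-\tilde\Xi_a| \le \frac{N^\e}{N^2 t_{ab}} = \frac{N^{\e+\d_2}}{N^2|\ga_a-\ga_b|}\sim \frac{N^{\e+\d_2}}{N|a-b|}
\]
in the bulk. So each pair contributes $\lesssim \h^{-2}\|g\|_2^2 N^{\e+\d_2}/(N|a-b|)$.

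\textbf{Step 3 (main obstacle): summing over pairs.} Naively summing over $|a-b|>N^{\d_1}/(N\h)$ gives $\h^{-2}N^{\e+\d_2}\log N\,\|g\|_2^2$, which is off from the target by a factor $\h^{-1}$. This is too lossy, so I need to use localization.

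The point is that $\phi(x)$ is only large when $x$ lies within $O(\h)$ of the mass of $g$. I would therefore refine the Lipschitz bound to a local one:
\[
|\phi(x)-\phi(y)|\le |x-y|\sup_{|u-x|\le|x-y|}|\phi'(u)|, \qquad |\phi'(u)|\lesssim \h^{-1}(P_\h*|g|)(u).
\]
Since $|x-y|\ll \h$ is false here (the error $N^{-1+\d_2}/|a-b|$ can exceed $\h\sim N^{-2}$), I would instead write the increment using $(P_{\h'}*|g|)$ with $\h'=\h+|x-y|$.

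Next I would sum over $b$ using rigidity, $x_b\approx\ga_b$, via
\[
\sum_b (P_\h*|g|)(x_b)\lesssim N\|g\|_1 ,
\]
or in $L^2$ form $\sum_b (P_{\h}*|g|)(x_b)^2 \lesssim N\h^{-1}\|g\|_2^2$, with an $N^\e$ loss. Combining these through Cauchy--Schwarz in $(a,b)$ should turn $\h^{-2}$ into $\h^{-1}$, using $\sum_{b\ne a} (N|a-b|)^{-1}\lesssim N^{-1}\log N$ against one factor of $N$ from the sum over $a$. This leaves $N^{3\e+\d_2}\h^{-1}\|g\|_2^2$, where the three $N^\e$ losses come from rigidity, the coupling error, and the logarithm.

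I expect the bookkeeping in this weighted summation, which must keep one $\h^{-1}$ while exploiting that only $O(N\h\cdot N)$ effective pairs see the bump, to be the delicate step. I would follow \cite{LaSo22}*{Proposition 4.4} verbatim there; the only model-specific inputs are Proposition \ref{p:x-Xi-Z} and G$\b$E rigidity.
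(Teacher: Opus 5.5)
There is a genuine gap, and it sits exactly at your Step 3, which is the entire content of the lemma.

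\textbf{The premise of Step 3 is wrong.} On the summation range $|a-b|>N^{\delta_1}/(N\eta)$ one has
\[
\frac{N^{\varepsilon}}{N^2 t_{ab}}\sim\frac{N^{\varepsilon+\delta_2}}{N|a-b|}\lesssim N^{\varepsilon+\delta_2-\delta_1}\,\eta\ll\eta \qquad(\varepsilon<\delta_1-\delta_2).
\]
This is precisely why the lemma imposes that restriction; for $\eta\sim N^{-2}$ the index set is empty. So the mean-value increment is local at scale $\eta$, and the derivative may be evaluated at $\tilde\Xi_a$, as the paper does via $|x-z|^{-2}\lesssim|\tilde\Xi_a-z|^{-2}$. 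No $\eta'$ is needed.

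\textbf{Your summation ingredients cannot remove the extra $\eta^{-1}$.} This is the more serious problem. After localization you need $\sum_a\E[(P_\eta*|g|)(x_a)^2]\lesssim N^{1+\varepsilon}\|g\|_2^2$, i.e.\ a bound of order $N$, not $N/\eta$, on the expected singularity at a submicroscopic scale. None of your tools gives this:
\begin{itemize}
\item Rigidity locates $x_b$ only to within $N^{-1+\varepsilon}\gg\eta$, so it says nothing about $P_\eta(x_b-E)$.
\item Your $L^2$ bound $N\eta^{-1}\|g\|_2^2$ is the trivial one from $P_\eta\le(\pi\eta)^{-1}$. Inserted into your weighted Cauchy--Schwarz, with $\sum_b N^{\varepsilon+\delta_2}/(N|a-b|)\lesssim N^{\varepsilon+\delta_2-1}\log N$, it returns exactly the naive $\eta^{-2}N^{\varepsilon+\delta_2}\log N\,\|g\|_2^2$.
\item The $L^1$ variant brings in $\|g\|_1$, which $\|g\|_2$ does not control.
\end{itemize}
Deferring this step to \cite{LaSo22} leaves the lemma unproved.

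\textbf{The missing input is submicroscopic anti-concentration.} The paper gets it from the Gaussian component. Since $\tilde\Xi_a=\Xi_a+Z_a$ with $Z_a$ independent of $\Xi_a$ and $\Var(Z_a)\asymp\log N/N^2$, Lemma \ref{l:log-div-1/Z} gives $\E|\tilde\Xi_a-z|^{-1}\lesssim N\log N$ for all $\eta\ge N^{-C}$, together with the two-point analogue (\ref{e:1/Z+h-2}). This is fed into the expansion (\ref{e:dCov-1})--(\ref{e:dCov-2}), with a case split according to whether $|E_z-\gamma_a|$ and $|E_w-\gamma_b|$ are below or above $N^{-1+\varepsilon_1}$. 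Rigidity is used only in the far cases. The near case also uses $t_{ab}\gtrsim N^{-\delta_2}|E_z-E_w|+N^{-1}$, and the other cases use $\sum_a t_{ab}^{-1}\lesssim N^{1+\delta_2}\log N$.

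\textbf{Your route can also be repaired.} Because the shift is $\ll\eta$, you have $|\phi'(u)|\lesssim\eta^{-1}(P_\eta*|g|)(x_a(T_{ab}+t_{ab}))$ for $u$ between $x_a(T_{ab}+t_{ab})$ and $\tilde\Xi_a$. Young's inequality, stationarity and $(P_\eta*|g|)^2\le P_\eta*|g|^2$ then reduce everything to $\sup_E\E_\nu\sum_{a\in I_\alpha}P_\eta(x_a-E)\lesssim N$. That bound follows from a dyadic decomposition, using Lemma \ref{l:wegner} below scale $N^{-1}$ and the local law above it. This would give $N^{\varepsilon+\delta_2}\log N\,\eta^{-1}\|g\|_2^2$ with no second-order terms. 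But some such density input has to appear explicitly in your argument.
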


\begin{proof}
		For convenience, denote
		\begin{equation}
				\tilde \Xi_a(t) := \Xi_a(t) + Z_a(t),
		\end{equation}
		and likewise for the index $b$. We remind the reader that $\tilde \Xi_a = \tilde \Xi_a^{(ab)}$ (and likewise $\Xi_a$, $Z_a$, etc.) depends on the pair $(a,b)$, and when summing over $(a,b) \in I_\a^2$ we use a separate coupling for each pair. From (\ref{e:x-Xi-Z}) and the restriction $|a - b| > \frac{N^{\d_1}}{N\h}$ we have
		\begin{equation}
				|x_a(T_{ab} + t) - \tilde \Xi_a(t)| \leq \frac{N^\e}{N^2 t_{ab}} \ll \h,
		\end{equation}
		which implies
		\begin{equation}
				\frac{1}{|x - z|^2} \lesssim \frac{1}{|\tilde \Xi_a(t) - z|^2}
		\end{equation}
		for any $x$ between $x_a(T_{ab} + t)$ and $\tilde \Xi_a(t)$. Therefore we may expand the covariance as
		\begin{align}
				&\l| \Cov \l( \Im \frac{1}{x_a - z}, \Im \frac{1}{x_b - w} \r) - \Cov \l( \Im \frac{1}{\tilde \Xi_a - z}, \Im \frac{1}{\tilde \Xi_b - w} \r) \r| \nonumber \\
				&\quad \leq \l| \Cov \l( \Im \frac{1}{x_a - z}, \Im \frac{1}{x_b - w} \r) - \Cov \l( \Im \frac{1}{\tilde \Xi_a - z}, \Im \frac{1}{x_b - w} \r) \r| \nonumber\\
				&\quad\quad+ \l| \Cov \l( \Im \frac{1}{\tilde \Xi_a - z}, \Im \frac{1}{x_b - w} \r) - \Cov \l( \Im \frac{1}{\tilde \Xi_a - z}, \Im \frac{1}{\tilde \Xi_b - w} \r) \r| \nonumber \\
				&\quad \lesssim \frac{N^\e}{N^2 t_{ab}}\E \l[ \l( \l.\frac{d}{dx} \r|_{x \in (x_a, \tilde \Xi_a)} \Im \frac{1}{x - z} \r)  \* \l( \Im  \frac{1}{x_b - w} - \E \l[ \Im \frac{1}{x_b - w} \r] \r) \r] \nonumber\\
				&\quad\quad+ \frac{N^\e}{N^2 t_{ab}} \E \l[ \l| \frac{1}{\tilde \Xi_a - z} - \E \l[ \frac{1}{\tilde \Xi_a - z} \r] \r| \* \frac{1}{|\tilde \Xi_b - w|^2} \r] \nonumber\\
				&\quad \lesssim \frac{N^\e}{N^2 t_{ab}} \E \l[ \l( \l.\frac{d}{dx} \r|_{x \in (x_a, \tilde \Xi_a)} \Im \frac{1}{x - z} \r)  \* \l( \Im  \frac{1}{\tilde \Xi_b - w} - \E \l[ \Im \frac{1}{\tilde \Xi_b - w} \r] \r) \r] \nonumber\\
				&\quad\quad+ \frac{N^{2\e}}{N^4 t_{ab}^2}\E \l[ \l( \l.\frac{d}{dx} \r|_{x \in (x_a, \tilde \Xi_a)} \Im \frac{1}{x - z} \r)  \* \l( \l. \frac{d}{dx} \r|_{x \in (x_b, \tilde \Xi_b)} \Im  \frac{1}{x - w} - \E \l[ \l. \frac{d}{dx} \r|_{x \in (x_b, \tilde \Xi_b)}\Im \frac{1}{x_b - w} \r] \r) \r] \nonumber \\
				&\quad\quad+ \frac{N^\e}{N^2 t_{ab}} \E \l[ \l| \frac{1}{\tilde \Xi_a - z} - \E \l[ \frac{1}{\tilde \Xi_a - z} \r] \r| \* \frac{1}{|\tilde \Xi_b - w|^2} \r] \nonumber\\ 
				&\quad \lesssim \frac{N^\e}{N^2 t_{ab}} \l\{ \E \l[ \frac{1}{|\tilde \Xi_a - z|^2} \* \l| \frac{1}{ \tilde \Xi_b - w} - \E \l[ \frac{1}{\tilde \Xi_b - w} \r] \r|   \r] + \E \l[ \l| \frac{1}{\tilde \Xi_a - z} - \E \l[ \frac{1}{\tilde \Xi_a - z} \r] \r| \* \frac{1}{|\tilde \Xi_b - w|^2} \r] \r\} \label{e:dCov-1} \\
				&\quad\quad+ \frac{N^{2\e}}{N^4 t^2_{ab}} \l\{\E \l[ \frac{1}{|\tilde \Xi_a - z|^2} \* \frac{1}{|\tilde \Xi_b - w|^2} \r] + \E \l[ \frac{1}{|\tilde \Xi_a - z|^2}  \r] \* \E \l[ \frac{1}{|\tilde \Xi_b - w|^2} \r]   \r\} \label{e:dCov-2}.
		\end{align}
		Due to the symmetry between the two terms in (\ref{e:dCov-1}), we may simply focus on the first term. Fixing a small $\e_1 > 0$, we may break the integral over $E_z$ and $E_w$ into four terms,
		\begin{equation}
				1 \equiv \l( \1_{|E_z - \ga_a| < \frac{N^{\e_1}}{N}} + \1_{|E_z - \ga_a| \geq \frac{N^{\e_1}}{N}} \r) \l( \1_{|E_w - \ga_b| < \frac{N^{\e_1}}{N}} +  \1_{|E_w - \ga_b| \geq \frac{N^{\e_1}}{N}} \r).
		\end{equation}
		The pertinent case is when both $E_z$ and $E_w$ are close to the corresponding typical locations. In this case we use the fact that
		\begin{equation}
				t_{ab} = N^{-\d_2} |\ga_a - \ga_b| \gtrsim \frac{N^{\d_1 - \d_2}}{N^2 \h} \gtrsim \frac{N^{\d_1 - \d_2}}{N},
		\end{equation}
		and hence so long as $\e_1 < \d_1 - \d_2$ it holds that
		\begin{equation}\label{e:tab-zw-close}
				t_{ab} \gtrsim N^{-\d_2} |E_z - E_w| + N^{-1}.
		\end{equation}
		Now, applying Young's inequality and Lemma \ref{l:log-div-1/Z} (note that we do not use any cancellation $(\tilde \Xi_b - w)^{-1}$ with its mean) we have
		\begin{align}
				&\sum_{\substack{(a,b) \in I_\a^2 \\ |a - b| > \frac{N^{\d_1}}{N\h}}}  \int_{\R^2} |g(E_z) g(E_w)| \1_{|E_z - \ga_a|, |E_w - \ga_b| < \frac{N^{\e_1}}{N}} \frac{N^\e}{N^2 t_{ab}}\E \l[ \frac{1}{|\tilde \Xi_a - z|^2} \* \l| \frac{1}{ \tilde \Xi_b - w} - \E \l[ \frac{1}{\tilde \Xi_b - w} \r] \r|   \r] dE_z dE_w  \\ 
				&\quad \lesssim \frac{N^\e}{N^2\h} \sum_{\substack{(a,b) \in I_\a^2 \\ |a - b| > \frac{N^{\d_1}}{N\h}}}  \int_{\R^2} |g(E_z)|^2 \1_{|E_z - \ga_a|, |E_w - \ga_b| < \frac{N^{\e_1}}{N}} \frac{1}{t_{ab}} \E \l[ \frac{1}{|\tilde \Xi_a - z|} \* \l| \frac{1}{ \tilde \Xi_b - w} - \E \l[ \frac{1}{\tilde \Xi_b - w} \r] \r|   \r] dE_z dE_w \\
				&\quad \lesssim \frac{N^{\e}}{N^2\h} \sum_{\substack{(a,b) \in I_\a^2 \\ |a - b| > \frac{N^{\d_1}}{N\h}}}  \int_{\R^2} |g(E_z)|^2 \1_{|E_z - \ga_a|, |E_w - \ga_b| < \frac{N^{\e_1}}{N}} \frac{N^{2+\e}}{t_{ab}} dE_z dE_w. \label{e:zw-close-1}
		\end{align}
		Applying (\ref{e:tab-zw-close}) we can further estimate
		\begin{align*}	
				(\ref{e:zw-close-1}) &\lesssim \frac{N^{2\e+\d_2}}{\h}  \sum_{\substack{(a,b) \in I_\a^2 \\ |a - b| > \frac{N^{\d_1}}{N\h}}} \int_{\R^2} |g(E_z)|^2 \1_{|E_z - \ga_a|, |E_w - \ga_b| < \frac{N^{\e_1}}{N}}\frac{1}{|E_z - E_w| + N^{-1}}  dE_z dE_w \\
				&\lesssim \frac{N^{2\e+\d_2 + 2\e_1}}{\h}  \int_{\R^2} |g(E_z)|^2 \frac{1}{|E_z - E_w| + N^{-1}} dE_z dE_w \\
				&\lesssim \frac{N^{2\e+\d_2 + 2\e_1}\log N}{\h} \lV g \rV_2^2
		\end{align*}
		where in the last step we integrated in $E_w$.

		Next, we consider the contribution when $|E_z - \ga_a| \geq N^{-1 + \e_1}$ and $|E_w - \ga_b| < N^{-1 + \e_1}$. Note that by rigidity it holds
		\begin{equation}\label{e:za-far}
				\frac{1}{|\tilde \Xi_a - (E_z + \i\h)|^2} \lesssim \frac{1}{|\tilde \Xi_a - (E_z + \i N^{-1})|^2}.
		\end{equation}
		Now, one of the contributions from the Young's inequality is given by
		\begin{align}	
				&\sum_{\substack{(a,b) \in I_\a^2 \\ |a - b| > \frac{N^{\d_1}}{N\h}}}  \int_{\R^2} |g(E_w)|^2 \1_{|E_z - \ga_a| \geq \frac{N^{\e_1}}{N}, |E_w - \ga_b| < \frac{N^{\e_1}}{N}} \frac{N^\e}{N^2 t_{ab}}\E \l[ \frac{1}{|\tilde \Xi_a - z|^2} \* \l| \frac{1}{ \tilde \Xi_b - w} - \E \l[ \frac{1}{\tilde \Xi_b - w} \r] \r|   \r] dE_z dE_w . \label{e:za-far-wb-close-1}
		\end{align}
		Applying (\ref{e:za-far}) and integrating in $E_z$ gives
		\begin{align}
				(\ref{e:za-far-wb-close-1}) &\lesssim \sum_{\substack{(a,b) \in I_\a^2 \\ |a - b| > \frac{N^{\d_1}}{N\h}}}  \int_{\R} |g(E_w)|^2 \1_{|E_w - \ga_b| < \frac{N^{\e_1}}{N}} \frac{N^\e}{N t_{ab}}\E \l[ \l| \frac{1}{ \tilde \Xi_b - w} - \E \l[ \frac{1}{\tilde \Xi_b - w} \r] \r|   \r] dE_w . \label{e:za-far-wb-close-2}
		\end{align}
		Next, using (\ref{e:1/Z+h}) to simplify the expectation,
		\begin{align}
				(\ref{e:za-far-wb-close-1}) &\lesssim N^{2\e} \sum_{\substack{(a,b) \in I_\a^2 \\ |a - b| > \frac{N^{\d_1}}{N\h}}}  \int_{\R} |g(E_w)|^2 \1_{|E_w - \ga_b| < \frac{N^{\e_1}}{N}} \frac{1}{t_{ab}} dE_w. \label{e:za-far-wb-close-3} \\
											&\lesssim N^{1 + 2\e + \d_2} \log N \sum_{b \in I_\a} \int_\R |g(E_w)|^2 \1_{|E_w - \ga_b| < \frac{N^{\e_1}}{N}} dE_w \\
											&\lesssim N^{1 + 2\e + \d_2 + \e_1} \lV g \rV_2^2,
		\end{align}
		where in the second inequality we used that for all $b \in I_\a$,
		\begin{equation}\label{e:1/tab-summable}
				\sum_{a \in I_\a} \1_{|a - b| > \frac{N^{\d_1}}{N\h}} \frac{1}{t_{ab}} \lesssim N^{1 + \d_2} \log N,
		\end{equation}
		and in the third inequality we used that for all $E_w$,
		\begin{equation}
				\sum_{b \in I_\a} \1_{|E_w - \ga_b| < \frac{N^{\e_1}}{N}} \lesssim N^{\e_1}.
		\end{equation}
		On the other hand, the other contribution from the Young's inequality is bounded by
		\begin{align}	
				&\sum_{\substack{(a,b) \in I_\a^2 \\ |a - b| > \frac{N^{\d_1}}{N\h}}}  \int_{\R^2} |g(E_z)|^2 \1_{|E_z - \ga_a| \geq \frac{N^{\e_1}}{N}, |E_w - \ga_b| < \frac{N^{\e_1}}{N}} \frac{N^\e}{N^2 t_{ab}}\E \l[ \frac{1}{|\tilde \Xi_a - z|^2} \* \l| \frac{1}{ \tilde \Xi_b - w} - \E \l[ \frac{1}{\tilde \Xi_b - w} \r] \r|   \r] dE_z dE_w .  \\
				&\quad \lesssim \sum_{\substack{(a,b) \in I_\a^2 \\ |a - b| > \frac{N^{\d_1}}{N\h}}}  \int_{\R^2} |g(E_z)|^2  \1_{|E_z - \ga_a| \geq \frac{N^{\e_1}}{N}} \frac{N^\e \log N}{N^2 t_{ab}} \E \l[ \frac{1}{|\tilde \Xi_a - z|^2} \r] dE_z \\
				&\quad \lesssim \sum_{\substack{(a,b) \in I_\a^2 \\ |a - b| > \frac{N^{\d_1}}{N\h}}}  \int_{\R} |g(E_z)|^2 \1_{|E_z - \ga_a| \geq \frac{N^{\e_1}}{N}} \frac{N^\e \log N}{N^2 t_{ab}} \* \frac{1}{(\ga_a - E_z)^2 + N^{-2}} dE_z \\
				&\quad \lesssim \frac{N^{\e + \d_2} (\log N)^2}{N}\sum_{a \in I_\a} \int_{\R} |g(E_z)|^2 \1_{|E_z - \ga_a| \geq \frac{N^{\e_1}}{N}} \frac{1}{(\ga_a - E_z)^2 + N^{-2}} dE_z \\
				&\quad \lesssim N^{1 + \e + \d_2} (\log N)^2 \lV g \rV_2^2,
		\end{align}
		where we integrated in $E_w$ to get the first inequality, applied (\ref{e:za-far}) together with rigidity in the second, used (\ref{e:1/tab-summable}) but for the sum over $b$ in the third, and the bound
		\begin{equation}\label{e:1/N-cauchy-kernel}
				\frac{1}{N} \sum_{a \in I_\a} \frac{1}{(\ga_a - E_z)^2 + N^{-2}} \lesssim N
		\end{equation}
		to get the last inequality. 

		We now cover the case where $|E_z - \ga_a| < N^{-1 + \e_1}$ but $|E_w - \ga_b| \geq N^{-1 + \e_1}$. For such $E_w$, it holds by rigidity that
		\begin{equation}
				\l| \frac{1}{\tilde \Xi_b - w} - \E \l[ \frac{1}{\tilde \Xi_b - w} \r] \r| \lesssim \frac{N^\e}{N} \frac{1}{(\ga_b - E_w)^2 + N^{-2}}.
		\end{equation}
		Then, applying (\ref{e:1/Z+h-2}) we are led to estimate
		\begin{align}
				&\sum_{\substack{(a,b) \in I_\a^2 \\ |a - b| > \frac{N^{\d_1}}{N\h}}}  \int_{\R^2} |g(E_z)g(E_w)| \1_{|E_z - \ga_a| < \frac{N^{\e_1}}{N}, |E_w - \ga_b| \geq \frac{N^{\e_1}}{N}} \frac{N^{\e}}{N^3 t_{ab}} \frac{1}{(\ga_b - E_w)^2 + N^{-2}} \E \l[ \frac{1}{|\tilde \Xi_a - z|^2} \r] dE_z dE_w \\
				&\quad \lesssim  \sum_{\substack{(a,b) \in I_\a^2 \\ |a - b| > \frac{N^{\d_1}}{N\h}}}  \int_{\R^2} |g(E_z)g(E_w)| \1_{|E_z - \ga_a| < \frac{N^{\e_1}}{N}, |E_w - \ga_b| \geq \frac{N^{\e_1}}{N}} \frac{N^{2\e}}{N^2 \h t_{ab}} \frac{1}{(\ga_b - E_w)^2 + N^{-2}} dE_z dE_w. \label{e:za-close-wb-far-1}
		 \end{align}
		 As in the previous case, we apply Young's inequality and bound the two terms separately. For the $|g(E_w)|^2$ term, we write
		 \begin{align}
				&\sum_{\substack{(a,b) \in I_\a^2 \\ |a - b| > \frac{N^{\d_1}}{N\h}}}  \int_{\R^2} |g(E_w)|^2 \1_{|E_z - \ga_a| < \frac{N^{\e_1}}{N}, |E_w - \ga_b| \geq \frac{N^{\e_1}}{N}} \frac{N^{2\e}}{N^2 \h t_{ab}} \frac{1}{(\ga_b - E_w)^2 + N^{-2}} dE_z dE_w \\
				&\quad\lesssim \frac{N^{2\e + \e_1}}{N^2 \h} \sum_{\substack{(a,b) \in I_\a^2 \\ |a - b| > \frac{N^{\d_1}}{N\h}}}  \int_{\R} |g(E_w)|^2 \1_{|E_w - \ga_b| \geq \frac{N^{\e_1}}{N}} \frac{1}{t_{ab}} \frac{1}{(\ga_b - E_w)^2 + N^{-2}} dE_w \\
				&\quad\lesssim  \frac{N^{2\e + \e_1 + \d_1}\log N}{N \h} \sum_{b \in I_\a} \int_\R |g(E_w)|^2 \1_{|E_w - \ga_b| \geq \frac{N^{\e_1}}{N}} \frac{1}{(\ga_b - E_w)^2 + N^{-2}} dE_w  \\
				&\quad \lesssim \frac{N^{2\e + \e_1 + \d_1}\log N}{\h} \lV g \rV_2^2,
		 \end{align}
		 where we integrated in $E_z$ to get the first inequality, applied (\ref{e:1/tab-summable}) in the second, and integrated in $E_w$ to get the third inequality. For the other contribution from $|g(E_z)|^2$, write
		 \begin{align} 
				&\sum_{\substack{(a,b) \in I_\a^2 \\ |a - b| > \frac{N^{\d_1}}{N\h}}}  \int_{\R^2} |g(E_z)|^2 \1_{|E_z - \ga_a| < \frac{N^{\e_1}}{N}, |E_w - \ga_b| \geq \frac{N^{\e_1}}{N}} \frac{N^{2\e}}{N^2 \h t_{ab}} \frac{1}{(\ga_b - E_w)^2 + N^{-2}} dE_z dE_w \\
				&\quad\lesssim \frac{N^{2\e}}{N\h}\sum_{\substack{(a,b) \in I_\a^2 \\ |a - b| > \frac{N^{\d_1}}{N\h}}}  \int_{\R} |g(E_z)|^2 \1_{|E_z - \ga_a| < \frac{N^{\e_1}}{N}, |E_w - \ga_b| \geq \frac{N^{\e_1}}{N}} \frac{1}{t_{ab}}  dE_z \\ 
				&\quad\lesssim \frac{N^{2\e + \d_2}\log N}{\h} \sum_{a \in I_\a} \int_\R |g(E_z)|^2 \1_{|E_z - \ga_a| < \frac{N^{\e_1}}{N}} dE_z \\
				&\quad\lesssim \frac{N^{2\e + \d_2 +\e_1}\log N}{\h} \lV g \rV_2^2,
		 \end{align}
		 where we integrated in $E_w$ to get the first inequality, applied (\ref{e:1/tab-summable}) in the second, and summed over $a$ to get the third inequality.

		As the last case for the term (\ref{e:dCov-1}), we consider when both $|E_z - \ga_a| \geq N^{-1+ \e_1}$ and $|E_2 - \ga_b| \geq N^{-1 + \e_1}$. When this happens we have by rigidity that
		\begin{align}
				\1_{|E_z - \ga_a|, |E_2 - \ga_b| \geq \frac{N^{\e_1}}{N}}  \E \l[ \frac{1}{|\tilde \Xi_a - z|^2} \* \l| \frac{1}{ \tilde \Xi_b - w} - \E \l[ \frac{1}{\tilde \Xi_b - w} \r] \r|   \r] &\lesssim \frac{N^\e}{N} \frac{1}{(\ga_a - E_z)^2 + N^{-2}} \frac{1}{(\ga_b - E_w)^2 + N^{-2}}.
		\end{align}
		Then, we may bound
		\begin{align}
				&\sum_{\substack{(a,b) \in I_\a^2 \\ |a - b| > \frac{N^{\d_1}}{N\h}}} \int_{\R^2} |g(E_z) g(E_w)| \frac{N^{2\e}}{N^3 t_{ab}} \frac{1}{(\ga_a - E_z)^2 + N^{-2}} \frac{1}{(\ga_b - E_w)^2 + N^{-2}}  dE_z dE_w \\ 
				&\quad\lesssim \sum_{\substack{(a,b) \in I_\a^2 \\ |a - b| > \frac{N^{\d_1}}{N\h}}} \int_{\R^2} |g(E_z)|^2 \frac{N^{2\e}}{N^3 t_{ab}} \frac{1}{(\ga_a - E_z)^2 + N^{-2}} \frac{1}{(\ga_b - E_w)^2 + N^{-2}}  dE_z dE_w \\
				&\quad\lesssim \frac{N^{2\e}}{N^2} \sum_{\substack{(a,b) \in I_\a^2 \\ |a - b| > \frac{N^{\d_1}}{N\h}}} \int_{\R} |g(E_z)|^2 \frac{1}{t_{ab}} \frac{1}{(\ga_a - E_z)^2 + N^{-2}}  dE_z \\
				&\quad \lesssim  \frac{N^{2\e + \d_2}\log N}{N} \sum_{a \in I_\a} \int_\R |g(E_z)|^2 \frac{1}{(\ga_a - E_z)^2 + N^{-2}}  dE_z \\
				&\quad \lesssim N^{1 + 2\e + \d_2} \log N \lV g \rV_2^2,
		\end{align}
		where we integrated in $E_w$ to get the first inequality, applied (\ref{e:1/tab-summable}) for the sum in $b$ in the second, and (\ref{e:1/N-cauchy-kernel}) to get the third.

		We now estimate the contribution from (\ref{e:dCov-2}), by writing
		\begin{align*}
				& \sum_{(a, b) \in I_\a^2} \1_{|a - b| > \frac{N^{\d_1}}{N\h}} \frac{N^{2\e}}{N^4 t_{ab}^2}\int_{\R^2} \l| g(E_z) g(E_w) \r| \E \l[ \frac{1}{|\tilde \Xi_a  - z|^2} \* \frac{1}{|\tilde \Xi_b - w|^2} \r] dE_z dE_w \\
				&\quad \lesssim \sum_{(a, b) \in I_\a^2} \1_{|a - b| > \frac{N^{\d_1}}{N\h}} \frac{N^{2\e}}{N^4 t_{ab}^2}\int_{\R^2} |g(E_z)|^2 \E \l[ \frac{1}{|\tilde \Xi_a  - z|^2} \* \frac{1}{|\tilde \Xi_b - w|^2} \r] dE_z dE_w \\
				&\quad \lesssim \sum_{(a, b) \in I_\a^2} \1_{|a - b| > \frac{N^{\d_1}}{N\h}} \frac{N^{2\e}}{N^4 t_{ab}^2\h}\int_{\R} |g(E_z)|^2 \E \l[ \frac{1}{|\tilde \Xi_a  - z|^2} \r] dE_z \\ 
				&\quad \lesssim \frac{N^{2\e + 2\d_2 - \d_1}}{N} \sum_{a \in I_\a} \int_{\R} |g(E_z)|^2  \E \l[ \frac{1}{|\tilde \Xi_a  - z|^2} \r] dE_z \\
				&\quad \lesssim \frac{N^{2\e + 2\d_2 - \d_1}}{N\h}\sum_{a \in I_\a} \int_{\R} |g(E_z)|^2  \E \l[ \frac{1}{|\tilde \Xi_a  - z|} \r] dE_z \\ 
				&\quad \lesssim \frac{N^{2\e + 2\d_2 - \d_1}}{N\h} \int_\R |g(E_z)|^2 \l\{ \sum_{\substack{a \in I_\a \\ |\ga_a - E_z| < \frac{N^\e}{N}}} \E \l[ \frac{1}{|\tilde \Xi_a - z|} \r] + \sum_{\substack{a \in I_\a \\ |\ga_a - E_z| \geq \frac{N^\e}{N}}} \frac{1}{|\ga_a - E_z|} \r\} \\
				&\quad \lesssim \frac{N^{2\e + 2\d_2 - \d_1}}{N\h} \int_\R |g(E_z)|^2 \l\{ \frac{N^{1+\e}}{\log N} \log \l( \frac{\log N}{N\h } \r) + N \log N \r\} \\
				&\quad \lesssim \frac{N^{3\e + 2\d_2 - \d_1}\log N}{\h} \lV g \rV_{2}^2,
		\end{align*}
		where we applied Young's inequality in the first inequality, integrated over $E_w$ in the second, used
		\begin{equation}
				\sum_{b \in I_\a} \1_{|a - b| > \frac{N^{\d_1}}{N\h}} \frac{1}{N^4 t_{ab}^2} \lesssim \frac{N^{2\d_2}}{N^2}\sum_{b \in I_\a} \1_{|a - b| > \frac{N^{\d_1}}{N\h}} \frac{1}{|a - b|^2} \lesssim \frac{N^{2\d_2 - \d_1}  \h}{N}
		\end{equation}
		in the third, rigidty in the fourth, and (\ref{e:1/Z+h}) in the second to last inequality. The estimate on the second term in (\ref{e:dCov-2}) is similar and in fact the proof is identical after the second inequality.
\end{proof}

\begin{lemma}\label{l:cov-Xi-tiny}
		Let $\d_1 > \d_2 > \d_3 > 0$ be as above. Then for some $B_1 = B_1(\d_1, \d_2, \d_3) > 0$ large enough, and
		\begin{equation}
				\frac{1}{N^{10}} \leq \h \leq \frac{1}{B_1 N},
		\end{equation}
		it holds
		\begin{equation}
				\l| \int_{\R^2} g_\ell(E_z) g_\ell(E_w) \Cov \l( \frac{1}{\tilde \Xi_a - z}, \Im \frac{1}{\tilde \Xi_b - w} \r) dE_z dE_w \r| \lesssim N^{-100} \lV g_\ell\rV_2^2.
		\end{equation}
\end{lemma}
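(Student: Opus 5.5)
We plan to follow the Fourier argument sketched in (\ref{a:xa-decomp-Cov}). We write $\Im \frac{1}{x - (E+\i\h)} = \pi (P_\h * \delta_x)(E)$, so that, after integrating against $g_\ell$,
\begin{equation*}
\int_\R g_\ell(E_z) \Im \frac{1}{x - z}\, dE_z = \pi (P_\h * g_\ell)(x) = \pi f_\ell(x) = \frac{1}{2}\int \hat f_\ell(\xi) e^{-\i\xi x}\, d\xi .
\end{equation*}
The real-part term $\Re \frac{1}{x-z}$ is handled the same way, with the conjugate Poisson kernel, whose Fourier multiplier $-\i\,\mathrm{sgn}(\xi)e^{-\h|\xi|}$ has the same modulus. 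Hence the quantity to bound is, up to constants, $\Cov(\phi(\tilde\Xi_a), f_\ell(\tilde\Xi_b))$, with $\phi$ the analogous function whose Fourier transform is supported where $\hat f_\ell$ is.

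Next we use the independence from Proposition \ref{p:x-Xi-Z}. Writing $\tilde\Xi = \Xi + Z$ with $(Z_a,Z_b)$ Gaussian and independent of $(\Xi_a,\Xi_b)$, we get
\begin{equation*}
\Cov(\phi(\tilde\Xi_a), f_\ell(\tilde\Xi_b)) = \int\!\!\int \hat\phi(\xi_1)\hat f_\ell(\xi_2)\Big(\E[e^{\i(\xi_1\Xi_a+\xi_2\Xi_b)}]\,\Psi(\xi_1,\xi_2) - \E[e^{\i\xi_1\Xi_a}]\E[e^{\i\xi_2\Xi_b}]\,\Psi(\xi_1,0)\Psi(0,\xi_2)\Big)\, d\xi_1 d\xi_2,
\end{equation*}
where $\Psi$ is the Gaussian characteristic function of $(Z_a,Z_b)$. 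We bound the two pieces separately, each in absolute value.

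The key input is the decay of $\Psi$. By Proposition \ref{p:x-Xi-Z}, $\Var Z_a \asymp (\d_2-\d_3)\log N/N^2$ and $|\Cov(Z_a,Z_b)| \lesssim 1/(N^2\log N)$. So the covariance matrix is bounded below by $c(\d_2-\d_3)\frac{\log N}{N^2}\,\mathrm{Id}$, and
\begin{equation*}
|\Psi(\xi_1,\xi_2)| \le \exp\big(-c'\tfrac{\log N}{N^2}(\xi_1^2+\xi_2^2)\big).
\end{equation*}
Since $\hat f_\ell$ is supported on $|\xi| \asymp 2^\ell = \h^{-1} \ge B_1 N$, each integrand is bounded by $\exp(-c' B_1^2 \log N)\,|\hat\phi(\xi_1)\hat f_\ell(\xi_2)|$. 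Choosing $B_1$ large in terms of $c'$, which depends on $\d_2-\d_3$, makes this at most $N^{-200}$.

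It remains to bound $\int |\hat\phi||\hat f_\ell|$. By Cauchy--Schwarz over the compact frequency annulus of size $\sim 2^\ell \le N^{10}$, we get $\|\hat f_\ell\|_{L^1} \lesssim 2^{\ell/2}\|g_\ell\|_2 \le N^{5}\|g_\ell\|_2$, and likewise for $\phi$. This gives the total bound $N^{-200}\cdot N^{10}\|g_\ell\|_2^2 \le N^{-100}\|g_\ell\|_2^2$.

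The main obstacle is that the Gaussian decay rate depends on the lower bound on the smallest eigenvalue of $\Cov(Z_a,Z_b)$. Its entries are only of order $\log N/N^2$, so a plain decay $e^{-c\xi^2/N^2}$ at $\xi \sim N$ would be merely constant. We need the extra $\log N$ in the variance, together with the large constant $B_1$, to obtain an arbitrary polynomial gain. We also have to check that the $\log N$ factor in $\Var Z_a$ survives uniformly over the bulk pairs considered; this follows from the choice $T_{ab}/t_{ab} = N^{\d_2-\d_3}$ in Proposition \ref{p:x-Xi-Z}.
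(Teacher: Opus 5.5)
Your proof is correct. The paper gives no written proof of this lemma (it defers to \cite{LaSo22}), but your argument is exactly the Fourier route the paper sketches around (\ref{a:xa-decomp-Cov}): integrating out $E_z,E_w$ turns the quantity into $\mathrm{Cov}(\phi(\tilde\Xi_a), f_\ell(\tilde\Xi_b))$, both of whose Fourier transforms are supported on $|\xi|\gtrsim B_1 N$, and independence together with $\lambda_{\min}(\mathrm{Cov}(Z_a,Z_b))\gtrsim(\delta_2-\delta_3)\log N/N^2$ from Proposition \ref{p:x-Xi-Z} gives a factor $N^{-cB_1^2(\delta_2-\delta_3)}$, which beats the trivial bound $\|\hat\phi\|_1\|\hat f_\ell\|_1\lesssim 2^{\ell}\|g_\ell\|_2^2\le N^{10}\|g_\ell\|_2^2$.
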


\subsection{Very high frequencies}%
\label{sub:regime3}

In this section we recall a Wegner estimate and use it to cover pairs $(a,b)$ that are closer than the scale $ \frac{N^{\d_1}}{N\h}$, for any $\h \lesssim N^{-1}$. Not only does this cover the remaining pairs not counted in Lemma \ref{l:cov-stab-submicro}, but it also covers the case of very high frequencies $\h \lesssim N^{-2}$, for which $(N\h)^{-1}$ is actually larger than the total number of particles (in which case the statement of Lemma \ref{l:cov-stab-submicro} is vacuous).

\begin{lemma}[Wegner estimate]\label{l:wegner-beta}
		Let $(x_k)_{k=1}^N$ be distributed as a Gaussian $\b$-ensemble for $1 \leq \b \leq 100$. Let $\kappa > 0$ and $0 < \h < 1/N$. There is a constant $N_0 > 0$ such that for all  $N \geq N_0$ it holds
		\begin{equation}
				\P \l( \exists k: |x_k - E| < \h \r) \lesssim N \h,
		\end{equation}
		for all $E \in (-2 + \kappa, 2 - \kappa)$.
\end{lemma}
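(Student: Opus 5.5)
The plan is to reduce the Wegner estimate to a bound on the one-point density of the G$\b$E in the bulk, via a union bound. By exchangeability and a union bound,
\begin{equation*}
\P\l(\exists k: |x_k - E| < \h\r) \leq \E_\nu\l[\#\{k : |x_k - E| < \h\}\r] = N \int_{E-\h}^{E+\h} \rho^{(1)}_N(x)\, dx,
\end{equation*}
where $\rho^{(1)}_N$ is the normalized one-point marginal density of $\nu_{N,\b}$. It therefore suffices to show that $\sup_{|x| < 2 - \kappa/2} \rho^{(1)}_N(x) \lesssim 1$ uniformly in $N \geq N_0$. The range $1 \leq \b \leq 100$ enters only through the uniformity of the constants.

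To bound the density, I would integrate out one particle. Write $\nu(d\xx) \propto e^{-\cal H(\xx)}$ and single out $x_1$. Conditionally on $\hat\xx = (x_2,\dots,x_N)$, the density of $x_1$ is
\begin{equation*}
p(x \mid \hat\xx) = \frac{\exp\l(-\frac{N\b}{4}x^2 + \b\sum_{j\geq 2}\log|x - x_j|\r)}{\int \exp\l(-\frac{N\b}{4}y^2 + \b\sum_{j\geq 2}\log|y - x_j|\r) dy}.
\end{equation*}
Setting $\Phi(x) := \b\sum_{j \geq 2}\log|x - x_j| - \frac{N\b}{4}x^2$, the goal is to show $p(x\mid\hat\xx) \lesssim 1$ for bulk $x$, on the event that $\hat\xx$ is rigid; the complement has overwhelming small probability, which only contributes $N^{-D}$ after integrating over the window, and is thus negligible since $\h \geq$ something polynomial can be assumed or handled by $N\h \geq N^{-D}$ for relevant $\h$. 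To get the bound, I would compare $e^{\Phi(x)}$ with $\int e^{\Phi(y)}dy$ over a window $|y - x| \leq C/N$. On rigidity, $\Phi$ is a smooth function (equal to $N\b$ times the equilibrium potential, which is constant on the support) plus a random fluctuation. At scale $1/N$ around $x$, the terms $\log|y - x_j|$ for $j$ with $|x_j - x| \gg N^{-1+\e}$ vary by $O(1)$ in total, which I would check with the local law for $s$ at scale $N^{-1+\e}$. The terms from the nearby particles, which number $N^{\e}$, pose the difficulty: $e^{\b\log|x - x_j|}$ can be small at $x$ but cannot be large, because $|x - x_j| \lesssim N^{-1+\e}$.

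The main obstacle is to avoid losing $N^{\e}$ factors from these nearby particles. Rather than using a crude window, I would use the following \emph{monotone} comparison. Since $\b \geq 1$, the map $y \mapsto \prod_j |y - x_j|^\b$ is maximized on each gap and comparable to its value at $x$ on a set of measure $\gtrsim$ the gap size. Alternatively, the cleaner route is a log-concavity argument: on each gap between consecutive $x_j$, $\Phi$ is concave, because $\b\log|y - x_j|$ is concave and the quadratic is concave. A concave density restricted to an interval satisfies $\sup p \leq \mathrm{mass}/\mathrm{length}$, up to a factor of 2 after normalizing the interval mass. Thus $p(x\mid\hat\xx) \lesssim \P(x_1 \in \text{gap} \mid \hat\xx)/|\text{gap}|$. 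Averaging over $\hat\xx$ and using exchangeability, together with the fact that bulk gaps have size $\gtrsim N^{-1}$ with high probability (or, better, taking expectations of $\1/\text{gap}$, which by the level-repulsion bound $\P(\text{gap} < s/N) \lesssim s^{\b+1}$ is integrable for $\b \geq 1$), gives $\rho^{(1)}_N(x) \lesssim \E[\#\text{particles near } x]\cdot O(1) \lesssim 1$. I expect controlling the expectation of the inverse gap to be the delicate step; I would handle it by a second application of the same concavity argument to the neighboring particle, which yields the repulsion exponent $\b+1 > 1$.
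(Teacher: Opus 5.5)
Your first step, the Markov/union bound reducing the claim to a bulk bound on the one-point density, is the same as the paper's. The argument you give for the density bound, however, fails at its key step.

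The inequality $\sup p\le 2\,\mathrm{mass}/\mathrm{length}$ holds for \emph{concave} nonnegative functions on an interval. On a gap $(a,b)$ of $\hat\xx$ you only know that $\Phi$ is concave, i.e.\ that $e^{\Phi}$ is \emph{log}-concave. The function $e^{\Phi}$ itself is not concave: near $a$ it behaves like $|y-a|^{\beta}$, which is convex for $\beta>1$.

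Log-concave functions vanishing at both endpoints can be sharply peaked. For example, $(y-a)^{\beta}(b-y)^{\beta}e^{-\lambda y}$ has $\sup/(\mathrm{mass}/(b-a))$ of order $\lambda(b-a)$. In your setting $\lambda$ is the drift $\Phi'$ generated by all the other particles, and it is controlled only through the local configuration. So bounding the peak brings back the local-law/rigidity control of $\Phi'$, with the $N^{\varepsilon}$ losses you were trying to avoid.

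The averaging step has the same defect. It needs a loss-free, $N$-uniform bound on the expected inverse gap around $x$, i.e.\ a level-repulsion estimate. That is at least as deep as the lemma itself, and the justification you sketch reuses the same concavity claim. A high-probability lower bound on gaps does not help here, since you need an expectation of an unbounded quantity.

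Separately, discarding a non-rigid event at cost $N^{-D}$ cannot give a bound $\lesssim N\eta$ uniformly over all $0<\eta<1/N$.

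The missing idea is that no conditioning or local analysis is needed. The paper uses that the Vandermonde factor is exactly invariant under the global shift $\xx\mapsto\xx-t\mathbf{1}$. Under this shift the Gaussian weight is multiplied by $\exp\bigl(t\frac{N\beta}{2}\langle\xx,\mathbf{1}\rangle-t^2\frac{N^2\beta}{4}\bigr)$. Taking $|t|\le 1/N$ with the sign of $\langle\xx,\mathbf{1}\rangle$ gives $\rho_N(\xx)\le e^{\beta/4}\rho_N(\xx-t\mathbf{1})$, and hence
$$\rho_N(\xx)\lesssim N\int_{-1/N}^{1/N}\rho_N(\xx-t\mathbf{1})\,dt.$$
Inserting this into $\E[\mathcal{N}(I)]$ and changing variables yields $\E[\mathcal{N}(I)]\lesssim N|I|\,\E[\mathcal{N}(I+[-1/N,1/N])]$. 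The microscopic bound $\E[\mathcal{N}(J)]=O(1)$ for bulk intervals with $|J|\asymp N^{-1}$, from \cite{BoMoPa22}, then finishes the proof. This works for every $\eta\in(0,1/N)$, with no $N^{\varepsilon}$ loss and no rigidity input.
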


\begin{proof}
		This follows immediately from a Markov inequality,
		\begin{equation}
				\P \l( \exists k : |x_k - E| < \h \r) = \P \l( \Nor([E - \h, E + \h]) \geq 1 \r) \leq \E \l[ \Nor([E - \h, E + \h]) \r] \lesssim N \h,
		\end{equation}
		where in the last inequality we applied Lemma \ref{l:wegner} from the appendix.
\end{proof}

The following propositions now follow from the same proofs of \cite{LaSo22}*{Propositions 2.13 \& 4.1, 4.2} using \cite{LaSo22}*{Theorem 2.11} (which is stated above as our Lemma \ref{l:wegner-beta}) by just replacing the words ``smooth Wigner matrix'' with ``Gaussian $\b$-ensemble''. For sake of completeness, we briefly summarize their proofs from \cite{LaSo22}. Recall that we used these propositions to handle the \emph{very} small scales $N^{-A} \lesssim \h \lesssim N^{-2 + \d_1}$ in our proof of Theorem \ref{t:var-H1/2}.

\begin{proposition}
		Let $(x_k)_{k=1}^N$ be distributed as a Gaussian $\b$-ensemble for $1 \leq \b \leq 100$. Let $A, \a, \e > 0$. Then it holds
		\begin{equation}\label{e:sup-bulk-E-est}
				\sup_{E \in \R} \sum_{\a N \leq k \leq (1 - \a) N} \E \l[ \frac{1}{N |x_k - (E + \i\h)|} \r]\lesssim N^\e,
		\end{equation}
		for all $\h \geq N^{-A}$.
\end{proposition}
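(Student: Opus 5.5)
We prove the bound $\sup_{E} \sum_{\a N \le k \le (1-\a)N} \E\bigl[(N|x_k - z|)^{-1}\bigr] \lesssim N^\e$, with $z = E + \i\h$, by splitting the sum according to the distance between $\ga_k$ and $E$. The two inputs are rigidity and the Wegner estimate, Lemma \ref{l:wegner-beta}. Fix a small $\e' > 0$ and work on the rigidity event. On that event every bulk particle satisfies $|x_k - \ga_k| \le N^{\e'-1}$; off it we use the trivial bound $|x_k - z|^{-1} \le \h^{-1} \le N^A$, which the overwhelming probability $N^{-D}$ kills once $D > A+1$.

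\emph{Far indices.} Take $k$ with $|\ga_k - E| \ge 2N^{\e'-1}$. On the rigidity event $|x_k - z| \ge |x_k - E| \gtrsim |\ga_k - E|$. In the bulk the $\ga_k$ are spaced at $\asymp 1/N$, so $|\ga_k - E| \gtrsim |k - k_E|/N$, where $k_E$ is the index whose typical location is closest to $E$. The contribution is therefore at most
\[
\sum_{k} \frac{1}{N|\ga_k - E|} \lesssim \sum_{1 \le j \le N} \frac{1}{j} \lesssim \log N .
\]
If $E$ lies far outside the bulk spectrum, every index is far and the same bound holds.

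\emph{Near indices.} There are at most $O(N^{\e'})$ indices with $|\ga_k - E| < 2N^{\e'-1}$. It therefore suffices to show
\[
\E\bigl[(N|x_k - z|)^{-1}\bigr] \lesssim \log N
\]
for each such $k$, uniformly in $E$ and in $\h \ge N^{-A}$. Write $\E[1/(N|x_k-z|)] \le \int_0^\infty \P\bigl(N|x_k - z| < 1/s\bigr)\,ds$ and cut the integral at $s = 1$ and at $s = N^{A}$. For $s \le 1$ the integrand is at most $1$, giving $O(1)$. For $s \ge N^{A}$ we have $1/(Ns) < \h$, so the event $|x_k - z| < 1/(Ns)$ is empty. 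In the middle range the event forces $|x_k - E| < 1/(Ns)$. Since $1/(Ns) < 1/N$ there, the Wegner estimate applies and gives
\[
\P\bigl(\exists j: |x_j - E| < 1/(Ns)\bigr) \lesssim N \cdot \tfrac{1}{Ns} = s^{-1}.
\]
Integrating $s^{-1}$ over $[1, N^{A}]$ gives $\lesssim A\log N$.

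Combining the two ranges yields the bound $N^{\e'}\log N + \log N \lesssim N^\e$ for $\e' < \e$. Every step is uniform in $E$, which gives the supremum.

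The main obstacle is the near-index step. Wegner must be uniform in $E$ ranging over the bulk interval $(-2+\kappa, 2-\kappa)$ and valid at all scales down to $N^{-A}$. That is exactly what Lemma \ref{l:wegner-beta} (via Lemma \ref{l:wegner}) provides, after choosing $\kappa$ relative to $\a$ so that $E$ near bulk indices lies inside the allowed window. Without it, rigidity alone gives only $N^{A-1}$ per index.
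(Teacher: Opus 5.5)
Your proof is correct and follows the same route as the paper. You split into the $O(N^{\e'})$ indices with $\ga_k$ near $E$, where a one-point bound is applied, and the remaining indices, which are handled by rigidity and a harmonic sum $\lesssim \log N$. The only difference is that the paper cites the one-point bound from \cite{LaSo22}, whereas you derive it directly from Lemma \ref{l:wegner-beta} via the layer-cake formula, which even gives $\lesssim \log N$ per index instead of $N^{\e}$.
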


\begin{proof}
		In \cite{LaSo22}, they first prove the estimate
		\begin{equation}\label{e:one-point-E-est}
				\E \l[ \frac{1}{N |x_k - (E + \i\h)|} \r] \lesssim N^\e,
		\end{equation}
		for all $E \in \R$ and $\h \geq N^{-A}$ and $k \in [\![ \a N, (1 - \a) N]\!]$. For the $CN^\e$ indices close to $E$, satisfying $|\ga_k - E| \leq N^{\e - 1}$, they apply (\ref{e:one-point-E-est}), whereas for the remaining indices a rigidity estimate suffices,
		\begin{equation}
				\sum_{\a N \leq k \leq (1 - \a) N} \1_{|\ga_k - E| > N^{\e -1}} \E \l[ \frac{1}{N|x_k - (E + \i\h)|} \r] \lesssim \sum^{N}_{k=1} \frac{1}{k} \lesssim \log N.
		\end{equation}
		Together, the claim follows.
\end{proof}

\begin{proposition}\label{p:submicro-close}
		Let $(x_k)_{k=1}^N$ be distributed as a Gaussian $\b$-ensemble, for any (fixed) $\b > 0$. Let $A > 1$ (large), $\d_1 , \e > 0$ (small) and
		\begin{equation}
				N^{-A} \leq \h \leq N^{-1}.
		\end{equation}
		Then for any function $g \in L^2(\R)$ it holds
		\begin{align}
				\sum_{\substack{(a,b) \in I_\a^2 \\ |a - b| < \frac{N^{\d_1}}{N\h}}} \int_{\R^2} |g(E_z) g(E_w)| \Cov \l( \Im \frac{1}{x_a - (E_z + \i \h)}, \Im \frac{1}{x_b - (E_w + \i\h)} \r) dE_z dE_w \lesssim \frac{N^{\d_1 + \e}}{\h} \lV g \rV_2^2.
		\end{align}
\end{proposition}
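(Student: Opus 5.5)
The plan is to discard all cancellation in the covariance and bound it by the sum of the two moment terms,
\[
\Bigl|\Cov\Bigl(\Im \tfrac{1}{x_a - z}, \Im \tfrac{1}{x_b - w}\Bigr)\Bigr| \le \E\Bigl[\tfrac{1}{|x_a - z|}\tfrac{1}{|x_b - w|}\Bigr] + \E\Bigl[\tfrac{1}{|x_a - z|}\Bigr]\E\Bigl[\tfrac{1}{|x_b - w|}\Bigr].
\]
This uses $|\Im (x - z)^{-1}| \le |x - z|^{-1}$. Next, apply Young's inequality $|g(E_z)g(E_w)| \le \tfrac12(|g(E_z)|^2 + |g(E_w)|^2)$. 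By the symmetry $a\leftrightarrow b$, $z \leftrightarrow w$ it suffices to treat the $|g(E_z)|^2$ term.

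For that term, perform the $E_w$ integral first, pointwise in $x_b$, using the deterministic bound
\[
\int_\R \frac{dE_w}{|x_b - E_w - \i\h|} \lesssim \log \frac{1}{\h} \lesssim A\log N.
\]
Strictly speaking, $\int dE/|x - E - \i \h|$ diverges logarithmically at infinity. I would fix this by noting that $g$ (that is, $g_\ell$) decays away from $(-2+\kappa,2-\kappa)$ by Proposition \ref{p:LP-decay}, so one may restrict $E_w$ to a bounded window, say $|E_w| \le 3$, at the cost of a negligible $N^{-100}\lV g\rV_2^2$ error. On that window the integral is $O(\log N)$ uniformly in $x_b$ with $|x_b| \lesssim 1$. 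Outside the rigidity event, both $|x_b|$ and the resolvents are at most $\h^{-1} \le N^A$, which is killed by the overwhelming-probability bound. The same treatment handles the product of expectations.

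After the $E_w$ integral, we are left with
\[
\log N \sum_{a \in I_\a} \int |g(E_z)|^2\, \#\Bigl\{b : |a-b| < \tfrac{N^{\d_1}}{N\h}\Bigr\}\, \E\Bigl[\tfrac{1}{|x_a - z|}\Bigr]\, dE_z .
\]
The counting factor is at most $2N^{\d_1}/(N\h)$. Now invoke the preceding proposition (\ref{e:sup-bulk-E-est}), which gives $\sum_{a \in I_\a} \E[1/|x_a - z|] \lesssim N^{1+\e}$ uniformly in $E_z$ for $\h \ge N^{-A}$. Multiplying, the total is
\[
\frac{N^{\d_1}}{N\h} \cdot N^{1+\e}\log N \cdot \lV g\rV_2^2 \lesssim \frac{N^{\d_1+2\e}}{\h}\lV g\rV_2^2,
\]
and renaming $\e$ gives the claim.

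The main obstacle is that (\ref{e:sup-bulk-E-est}) was stated (via the Wegner estimate Lemma \ref{l:wegner-beta}) only for $1\le\b\le 100$, while the proposition claims all $\b>0$. The one-point bound (\ref{e:one-point-E-est}) is the delicate input. I would attempt to derive it for general $\b$ from the level-repulsion/density bound of Lemma \ref{l:wegner} in the appendix, decomposing dyadically in $|x_k - E|$ between $\h$ and $N^{-1+\e}$, where the probability that $|x_k-E|<r$ is $\lesssim Nr$. This gives $\E[1/(N|x_k-z|)] \lesssim \sum_{\text{dyadic } r} Nr/(Nr) \lesssim \log(1/\h) \lesssim A\log N$. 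For this I would need the one-particle density to be bounded by $O(N)$ on the bulk for every $\b>0$, which I expect to follow from the appendix lemma's proof (a density-of-states bound rather than true Wegner). If that fails for small $\b$, a fallback is to accept the restriction to the stated range of $\b$ in this step.
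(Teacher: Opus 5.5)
Your overall scheme is the paper's: bound the covariance by $\E[XY]+\E[X]\E[Y]$, apply Young, integrate out one energy variable, count the $O(N^{\delta_1}/(N\eta))$ neighbours $b$, and finish with (\ref{e:sup-bulk-E-est}). The step that fails is your first one. Once you replace $\Im\frac{1}{x_b-w}$ by $\frac{1}{|x_b-w|}$ before integrating in $E_w$, you lose integrability, since $\int_\R \frac{dE_w}{|x_b-E_w-\mathrm{i}\eta|}=\infty$ for every $x_b$.

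Your bounded-window repair is not available here, for two reasons:
\begin{itemize}
\item The proposition is stated for an arbitrary $g\in L^2(\R)$, while Proposition \ref{p:LP-decay} only concerns the Littlewood--Paley pieces $g_\ell$ of a bulk-supported $f$.
\item Even for $g=g_\ell$, that proposition bounds $g_\ell$ outside the bulk in terms of $\|f\|_{L^2}$. The truncation error would therefore look like $2^{-M\ell}N^{C}\|f\|_2^2$, as in Proposition \ref{p:submicro-edge}, not $N^{-100}\|g\|_2^2$.
\end{itemize}
Also, off the rigidity event only the resolvents are bounded by $\eta^{-1}$, not $|x_b|$. As written, your argument proves at best a variant of the statement for $g=g_\ell$, with a different error term.

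The fix is to keep the imaginary part until the last step. Since $\Im\frac{1}{x-(E+\mathrm{i}\eta)}=\frac{\eta}{(x-E)^2+\eta^2}\ge 0$, both terms in $|\Cov(X,Y)|\le\E[XY]+\E[X]\E[Y]$ are nonnegative. After Young's inequality, Tonelli gives $\int_\R\Im\frac{1}{x_b-(E_w+\mathrm{i}\eta)}\,dE_w=\pi$ exactly, for every realisation of $x_b$. No window, rigidity or $\log N$ factor is needed. Only then use $\Im\frac{1}{x_a-z}\le\frac{1}{|x_a-z|}$ and apply (\ref{e:sup-bulk-E-est}); this is precisely the paper's argument.

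Your remark on $\beta$ is well taken: the paper's proof cites (\ref{e:sup-bulk-E-est}), which is stated only for $1\le\beta\le 100$, without comment. Your dyadic derivation of (\ref{e:one-point-E-est}) from Lemma \ref{l:wegner} is sound:
\begin{itemize}
\item the translation argument in that lemma works for every $\beta>0$, because $e^{t^2N^2\beta/4}=O(1)$ for $|t|\le 1/N$;
\item each dyadic shell $r\in[\eta,N^{-1}]$ contributes $O(1)$, and there are $O(A\log N)$ shells;
\item rigidity handles $E$ away from the bulk.
\end{itemize}
Combined with the rigidity summation in the proof of (\ref{e:sup-bulk-E-est}), this closes the point, provided the microscopic input used in Lemma \ref{l:wegner} holds for all $\beta$.
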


\begin{proof}
		In \cite{LaSo22}, they bound the two terms of $\Cov(X, Y) = \E[XY] - \E[X] \E[Y]$ separately, both in the same way. For example, the second term may be estimated by writing, using Young's inequality and integrating out the Cauchy kernel,
		\begin{align*}
				&\sum_{\substack{(a,b) \in I_\a^2 \\ |a - b| < \frac{N^{\d_1}}{N\h}}} \int_{\R^2} |g(E_z) g(E_w) | \E \l[ \Im \frac{1}{x_a - (E_z + \i\h)} \r]\E \l[ \Im \frac{1}{x_b - (E_w + \i\h)} \r] dE_z dE_w \\
				&\lesssim \sum_{\substack{(a,b) \in I_\a^2 \\ |a - b| < \frac{N^{\d_1}}{N\h}}} \int_{\R^2} |g(E_z)|^2 \E \l[ \Im \frac{1}{x_a - (E_z + \i\h)} \r] dE_z \\
				&\lesssim \frac{N^{\d_1}}{N\h} \sum_{a \in I_\a} \int_\R |g(E_z)|^2 \E \l[ \Im \frac{1}{x_a - (E_z + \i\h)} \r] dE_z.
		\end{align*}
		Finally, an application of (\ref{e:sup-bulk-E-est}) yields the desired estimate.
\end{proof}

\begin{proposition}\label{p:submicro-edge}
		Let $\e, \kappa > 0$ (small) and $M, A > 0$ (large). Suppose $f$ is supported in $(-2 + \kappa, 2 + \kappa)$ and that $\h = 2^{-\ell}$ satisfies
		\begin{equation}
				N^{-A} \leq \h \leq N^{-1}.
		\end{equation}
		Also assume that $\a > 0$ is small enough so that $\ga_{\a N} < -2 + \kappa /4$. Then it holds
		\begin{multline}
				\l| \sum_{(a,b) \not\in I_\a^2} \int_{\R^2} g_\ell(E_z) g_\ell(E_w) \Cov \l( \Im \frac{1}{x_a - (E_z + \i\h)}, \Im \frac{1}{x_b - (E_w + \i\h)} \r) dE_z dE_w \r| \\
				\lesssim N^\e \l( \frac{1}{\h} \lV g_\ell \rV_2^2 + 2^{-M \ell} N^2 \lV f \rV_2^2 \r).
		\end{multline}
\end{proposition}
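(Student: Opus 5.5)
We split the pairs $(a,b)\notin I_\a^2$ according to whether both $\ga_a,\ga_b$ lie near the edge or only one of them does. We then use the localization of $g_\ell$ from Proposition \ref{p:LP-decay} to kill every contribution in which an edge particle interacts with $g_\ell$. The key observation is that if $a\notin I_\a$, then by the choice $\ga_{\a N}<-2+\kappa/4$ (and symmetrically at the right edge) and rigidity, with overwhelming probability $x_a$ lies in $\{|x|>2-\kappa/2\}$, at distance $\gtrsim\kappa$ from $\supp f$. We therefore split each $E$-integral into the region $|E|\le 2-\kappa/4$ (the bulk of $g_\ell$) and its complement (where $g_\ell$ is tiny).

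\textbf{Step 1 (edge particle, bulk energy).} Suppose $a\notin I_\a$ and $|E_z|\le 2-\kappa/4$. On the rigidity event $|x_a-E_z|\gtrsim\kappa$, so both $\Im (x_a-z)^{-1}$ and its fluctuation are $O(\h)$. Off the event we pay $N^{-D}\h^{-2}\le N^{-D+2A}$.

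Now sum over $b$ with the crude bound $\sum_b|\Im (x_b-w)^{-1}|\le N/\h$ and integrate $g_\ell$ by Cauchy--Schwarz on compact sets. The result is a contribution of order $N\cdot N\cdot\h\cdot\h^{-1}\lV g_\ell\rV_2^2$. This is too large, so we refine it. Writing $\sum_a\Im(x_a-z)^{-1}=N\Im s(z)$, the covariance of the edge part with the full $b$-sum is controlled by $|\Cov(\Im s_{\rm edge},\Im s(w))|$. Here $\Im s_{\rm edge}(z)$ is a smooth statistic of size $O(\h)$ with $O(\h)$-Lipschitz dependence, so its variance is $\lesssim \h^2 N^{-2}$ times a loop-equation/rigidity constant, by the standard CLT for smooth statistics. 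Applying Cauchy--Schwarz with $\Var(\Im s(w))\lesssim N^\e/(N\h)^2$ (from Proposition \ref{p:submicro-close} with the $N^\e$ bound (\ref{e:sup-bulk-E-est})), the contribution becomes $N^{\e}\,\h\cdot\h^{-1}\lV g_\ell\rV_1$-type terms. Since $g_\ell$ is restricted to a compact set, these are bounded by $N^\e\h^{-1}\lV g_\ell\rV_2^2$ after Young's inequality.

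\textbf{Step 2 (edge energy).} When $|E_z|>2-\kappa/4$ or $|E_w|>2-\kappa/4$, we argue as in the edge computation in the proof of Theorem \ref{t:var-H1/2}. Proposition \ref{p:LP-decay} gives $|g_\ell(E)|\le C_M2^\ell(1+2^{\ell M}\kappa^M)^{-1}\lV f\rV_2$. Combining this with the crude bounds $|\Im (x-z)^{-1}|\le\h^{-1}=2^\ell$ and at most $N^2$ pairs yields $C_M N^2 2^{-M'\ell}\lV f\rV_2^2$ after integrating the polynomial tails. This is the second term in the statement. If only one energy is far, we use $L^2$ for the other factor via Cauchy--Schwarz, which is the same type of bound.

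\textbf{Main obstacle.} The delicate point is Step 1, namely bounding $\Cov$ between an edge-localized smooth quantity and a bulk submicroscopic resolvent without losing powers of $N$. A naive Cauchy--Schwarz over $a\notin I_\a$ loses a factor $N$. We will first try the $O(\h)$ smoothness of $x\mapsto\Im(x-z)^{-1}$ on $|x|>2-\kappa/2$ together with a Poincar\'e/spectral gap-type bound $\Var_\nu(F)\lesssim (N\b)^{-1}\E|\nabla F|^2$. This bound holds for the convex Hamiltonian $\cal H$ with gap $\gtrsim N$, as used in the long-time estimate above, and it gives $\Var(N\Im s_{\rm edge})\lesssim N^{-1}\cdot N\h^2$. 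This is enough for the claimed bound.
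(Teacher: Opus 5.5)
Your argument is correct in outline and has the same skeleton as the paper's. Proposition \ref{p:LP-decay} disposes of energies outside the bulk; this is your Step~2, and in the mixed case note $\|g_\ell\|_2\lesssim\|f\|_2$, since $e^{2^{-\ell}|\xi|}\le e^{8/3}$ on $\supp\hat f_\ell$. At bulk energies, rigidity makes every edge-indexed resolvent $O(\eta)$.

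One slip must be fixed. The condition $a\notin I_\alpha$ only gives $\gamma_a<-2+\kappa/4$ (or its mirror image), so splitting at $|E|\le 2-\kappa/4$ yields no separation, and $|x_a-E_z|\gtrsim\kappa$ is false as stated. Split at $|E|\le 2-\kappa/2$, as the paper does. Then $|x_a-E_z|\ge\kappa/4-o(1)$ on the rigidity event, while Proposition \ref{p:LP-decay} still gives $|g_\ell(E)|\le C_M2^{\ell(1-M)}(\kappa/2)^{-M}\|f\|_2$ for $|E|>2-\kappa/2$.

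Your ``main obstacle'' is not real: it comes only from bounding $\sum_b\Im(x_b-w)^{-1}$ deterministically by $N/\eta$. On the rigidity event, for bulk $E_w$ and $\eta\le N^{-1}$, only $O(N^{\varepsilon})$ particles lie within $N^{-1+\varepsilon}$ of $E_w$. Hence $\sum_b\Im(x_b-w)^{-1}\lesssim N^{\varepsilon}/\eta+N^2\eta\lesssim N^{\varepsilon}/\eta$. Set $X_a=\Im(x_a-z)^{-1}$ and $Y_b=\Im(x_b-w)^{-1}\ge0$. For each $a\notin I_\alpha$ this gives $|\sum_b\Cov(X_a,Y_b)|\le\E[|X_a-\E X_a|\sum_bY_b]\lesssim N^{\varepsilon}$; off the rigidity event everything is $O(N^{1+2A-D})$. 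Together with the symmetric piece $a\in I_\alpha$, $b\notin I_\alpha$, the total is $N^{1+\varepsilon}\|g_\ell\|_2^2\le N^{\varepsilon}\eta^{-1}\|g_\ell\|_2^2$. This is presumably all that the paper's one-line appeal to rigidity and local laws amounts to.

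Your Brascamp--Lieb route is also valid and gives the stronger bound $N^{\varepsilon}\|g_\ell\|_2^2$: since $\nabla^2\mathcal H\ge N\beta/2$ on the Weyl chamber, $\Var(\sum_{a\notin I_\alpha}X_a)\lesssim\eta^2$. However, two justifications need fixing:
\begin{enumerate}
\item The ``CLT for smooth statistics'' does not apply to an index-restricted sum; use Brascamp--Lieb or rigidity instead.
\item The bound $\Var(\Im s(w))\lesssim N^{2\varepsilon}(N\eta)^{-2}$ follows from the rigidity count above, not from Proposition \ref{p:submicro-close}, which only controls $g$-weighted sums over close pairs.
\end{enumerate}
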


\begin{proof}
		Note that Proposition \ref{p:LP-decay} enables us to restrict the integration to the domain $dE_z dE_w \in (-2 + \kappa / 2, 2 - \kappa/2 )^2$. Together with rigidity and standard local law estimates (e.g. \cite{BoMoPa22}*{Theorem 1.1}), this separation of bul and edge is enough to handle this edge contribution.
\end{proof}

\appendix

\section{Wegner estimate}%
\label{app:wegner}

\begin{lemma}\label{l:wegner}
		Fix a $\kappa > 0$. There exists a $C > 0$ such that for any $I \subset (-2 + \kappa, 2 - \kappa)$ with  $|I| \lesssim N^{-1}$, it holds
		\begin{equation}
				\E \l[ \Nor(I) \r] \leq C N |I|.
		\end{equation}
		In particular, the constant does not depend on the choice of interval $I$ in the bulk.
\end{lemma}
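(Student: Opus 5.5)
We need to prove a Wegner-type estimate: for intervals $I$ in the bulk of length $|I|\lesssim N^{-1}$, $\E[\Nor(I)]\le CN|I|$. In other words, the one-point density of the G$\b$E is bounded by $CN$ in the bulk, uniformly down to arbitrarily small scales.

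The route we will take is to write $\E[\Nor(I)] = \int_I N\rho_1^{(N)}(x)\,dx$, where $\rho_1^{(N)}$ is the normalized one-point marginal density of $\nu$. It then suffices to show the pointwise bound
\[
\sup_{x\in(-2+\kappa,2-\kappa)}\rho_1^{(N)}(x)\le C .
\]
To get this, single out one particle, say $x_1=x$, and integrate out the rest. This gives
\[
\rho_1^{(N)}(x) = \frac{Z'_{N-1}}{Z_{N,\b}}\, e^{-\frac{N\b}{4}x^2}\, \E_{\nu'}\Big[\prod_{j\ge2}|x-y_j|^{\b}\Big],
\]
where $\nu'$ is the $(N-1)$-particle ensemble with the same potential scaling $N\b/4$, i.e. a slightly rescaled G$\b$E.

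The key step is to compare $\rho_1^{(N)}(x)$ with $\rho_1^{(N)}(x')$ for nearby points. Using the gap/local structure, we aim to show that $\rho_1^{(N)}$ cannot vary by more than a constant factor over windows of size $N^{-1}$. Concretely, we want
\[
\frac{\E_{\nu'}\big[\prod_j|x-y_j|^{\b}\big]}{\E_{\nu'}\big[\prod_j|x'-y_j|^{\b}\big]}\lesssim 1 \qquad\text{for } |x-x'|\le N^{-1}.
\]
Granting this, averaging over a window $J\ni x$ of length $N^{-1}$ gives
\[
\rho_1(x)\lesssim N\int_J\rho_1 = \E[\Nor(J)] ,
\]
and $\E[\Nor(J)]\lesssim 1$ holds by rigidity or the local law (overwhelming-probability bounds $\Nor(J)\le N^{\e}$, refined to $O(1)$ via the second-moment local law at scale $N^{-1}$). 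Integrating $N\rho_1\lesssim N$ over $I$ then yields the claim.

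The main obstacle is the ratio bound, because $\prod_j|x-y_j|^{\b}$ is sensitive to particles of $\nu'$ lying within $N^{-1}$ of $x$. For the factors coming from far particles we will use rigidity, which gives $\sum_{|y_j-x|>N^{-1+\e}}\log|x-y_j|-\log|x'-y_j| = O(1)$ with overwhelming probability through the smooth log-linear statistic. For the $O(N^{\e})$ nearby factors we will try a direct argument. Shifting the near particles by $x'-x$ is a change of variables with unit Jacobian, and it preserves the near interactions exactly. It changes the interactions with the far particles, and the weight $e^{-N\b y^2/4}$, only by bounded factors, since the shift is of size $N^{-1}$ and there are $N^{\e}$ near particles. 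This could cost $e^{N^{\e}\cdot O(1/N\cdot N)}$, which is too crude.

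To repair this, we would instead use the local law for $\nu'$ at scale $N^{-1}$ to restrict to $O(1)$ near particles with high probability. The complementary event, where many particles crowd near $x$, is controlled by the overcrowding bounds (the probability of $k$ particles in a window of size $N^{-1}$ decays superexponentially in $k$), which are standard for log-gases. If this local comparison fails, the fallback is to cite the known bulk density bounds via loop equations, \cite{BoMoPa22}, which give $\rho_1^{(N)}\to\rho_{\t{sc}}$ uniformly in the bulk.
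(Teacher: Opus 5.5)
Your reduction to a bulk bound on the one-point density is fine. The gap is the step that carries all the content: the local comparability of $\rho_1^{(N)}$ at scale $N^{-1}$. The route you sketch to it does not close.

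\textbf{Why the ratio bound does not follow.} The ratio involves $\E_{\nu'}[\prod_j|x-y_j|^{\beta}]$, which is $\nu'$ tilted by an exponentially large weight. The logarithm of that weight has diverging fluctuations, and the tilt reshapes the configuration near $x$. Overwhelming-probability rigidity or overcrowding bounds under $\nu'$ therefore do not control it. Write the ratio as $1/\E_{\mu_x}[\prod_j(|x'-y_j|/|x-y_j|)^{\beta}]$, with $\mu_x\propto\prod_j|x-y_j|^{\beta}\,d\nu'$ the law of the other particles given one at $x$. A lower bound then requires that, with probability bounded below, no particle of $\mu_x$ sits within $cN^{-1}$ of $x'$. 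That is a small-window Wegner-type bound for the conditioned measure, the same kind of statement you set out to prove.

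\textbf{Why the near-particle shift fails.} The problem is more basic than the $e^{O(N^{\varepsilon})}$ count. The near/far cutoff is fixed, so a near and a far particle can be arbitrarily close across it. The factor $|y_i+h-y_j|^{\beta}/|y_i-y_j|^{\beta}$ is then unbounded above and below, and particles can even collide or swap.

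\textbf{Why the fallback is unavailable.} The local law and loop-equation input of \cite{BoMoPa22} control resolvent quantities only for $\Im z\gtrsim N^{-1}$, plus the microscopic bound $\E[\Nor(J)]=O(1)$ for $|J|\asymp N^{-1}$. They do not give a pointwise bound on the one-point density, which is a statement below the microscopic scale.

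\textbf{The missing idea: translate all $N$ particles at once.} The Vandermonde factor is invariant under $\mathbf{x}\mapsto\mathbf{x}-t\mathbf{1}$, so only the Gaussian weight changes:
\[
\rho_N(\mathbf{x}-t\mathbf{1})=\rho_N(\mathbf{x})\exp\Bigl(\tfrac{N\beta}{2}\,t\sum_i x_i-\tfrac{N^2\beta}{4}\,t^2\Bigr).
\]
Choose, configuration by configuration, the sign of $t$ equal to that of $\sum_i x_i$, with $|t|\le N^{-1}$. This gives $\rho_N(\mathbf{x})\le e^{\beta/4}\rho_N(\mathbf{x}-t\mathbf{1})$, hence $\rho_N(\mathbf{x})\lesssim N\int_{-1/N}^{1/N}\rho_N(\mathbf{x}-t\mathbf{1})\,dt$ pointwise in $\mathbf{x}$. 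Integrate $\Nor(I)$ against this and change variables to get $\E[\Nor(I)]\lesssim N|I|\,\E[\Nor(I+[-N^{-1},N^{-1}])]$. The last factor is $O(1)$ by the microscopic Wegner bound \cite{BoMoPa22}*{Lemma 3.3}; rigidity alone would only give $N^{\varepsilon}$ there.

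After marginalizing, this is exactly your averaged inequality $\rho_1^{(N)}(x)\lesssim N\int_{|t|\le N^{-1}}\rho_1^{(N)}(x-t)\,dt$. You never needed a pointwise ratio bound for each $x'$ in the window, only this average. The global shift produces it with no rigidity, tilted measure or overcrowding input.
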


\begin{proof}
		Note that the microscopic case is covered by \cite{BoMoPa22}*{Lemma 3.3}, i.e. it holds
		\begin{equation}\label{e:wegner-micro}
				\E[\Nor(I)] \leq O(1) \quad \t{ for } \quad |I| \asymp N^{-1}, \quad I \subset (-2 + \kappa, 2 - \kappa).
		\end{equation}
		So, it remains to prove for submicroscopic intervals $|I| \ll N^{-1}$. Recall that we let
		\begin{equation}
				\rho_N(\xx) := \frac{1}{Z_N} \exp \l( - \frac{N\b}{4} |\xx|^2 \r) \prod_{i < j} |x_i - x_j|^\b 
		\end{equation}
		denote the density of the particles. Note that it is enough to show that the density is not too sensitive to translations,
		\begin{equation}\label{e:rho-trans-invar}
				\rho_N(\xx) \lesssim N \int_{-1/N}^{1/N} \rho_N(\xx - t \bm 1) dt.
		\end{equation}
		Indeed, if (\ref{e:rho-trans-invar}) holds and $|I| \leq 1/N$ then we may write
		\begin{align*}
				\E \l[ \Nor(I) \r] &= \int_{\R^N} \Nor(I)(\xx) \rho_N(\xx) d\xx \\
								   &\lesssim N \int_{-1/N}^{1/N} \int_{\R^N} \Nor(I)(\xx) \rho_N(\xx - t \bm 1) d\xx dt \\
								   &= N \int_{-1/N}^{1/N} \int_{\R^N} \Nor(I)(\xx + t\bm 1) \rho_N(\xx) d\xx dt \\ 
								   &= N \int_{\R^N} \rho_N(\xx) \sum_{i=1}^N \int_{-1/N}^{1/N} \1_I(x_i + t) dt d\xx \\
								   &\leq N|I| \int_{\R^N} \rho_N(\xx) \Nor(I + [-1/N, 1/N]) d\xx \\
								   &\lesssim N |I|
		\end{align*}
		where in the last inequality we applied the microscopic Wegner estimate (\ref{e:wegner-micro}) from \cite{BoMoPa22}*{Lemma 3.3}. 

		Let us now prove (\ref{e:rho-trans-invar}). The Vandermonde term is translation invariant, so we have
		\begin{align*}
				\rho_N(\xx - t \bm 1) &= \frac{1}{Z_N} \exp \l( - \frac{N\b}{4}|\xx - t \bm 1|^2 \r) \prod_{i < j} |x_i - x_j|^\b \\
									  &= \rho_N(\xx) \exp \l( t\frac{N\b}{2}\<\xx, \bm 1\> - t^2 \frac{N^2\b}{4} \r).
		\end{align*}
		In particular, for every configuration $\xx$, we choose either $t \in [0,1/N]$ or $t \in [-1/N, 0]$ (i.e. matching the sign of $\<\xx, \bm 1\>$) such that 
		\begin{equation}
				\rho_N(\xx) \leq \rho_N(\xx - t\bm 1) \exp \l( t^2 \frac{N^2\b}{4} \r) \lesssim \rho_N(\xx - t \bm 1).
		\end{equation}
		Then, for example if $\<\xx , \bm 1\> > 0$ we write
		\begin{align*}
				\rho_N(\xx) = N \int_{0}^{1/N} \rho_N(\xx) dt \lesssim N \int_0^{1/N} \rho_N(\xx - t\bm 1) dt \leq N \int_{-1/N}^{1/N} \rho_N(\xx - t \bm 1) dt.
		\end{align*}
		The other case $\<\xx, \bm 1\> \leq 0$ is similar. 
\end{proof}

\section{Elementary estimates}

\begin{lemma}\label{l:gron-x2}
		Fix a $T > 0$. Let $X_t, a_t, b_t$ be continuous and nonnegative, satisfying
		\begin{equation}
				X_t \leq a_t + \int_0^t b_u X_u^2 du, \qquad \t{ for all } 0 \leq t \leq T.
		\end{equation}
		Then, denoting $A_T := \sup_{0 \leq t \leq T}$, it holds
		\begin{equation}\label{e:gron-x2}
				X_t \leq \frac{A_T}{1 - A_T \int_0^t b_u du }.
		\end{equation}
\end{lemma}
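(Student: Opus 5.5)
The proof is a Bihari-type comparison with the Riccati equation $y' = b y^2$. Throughout I read $A_T := \sup_{0 \le t \le T} a_t$. I will prove (\ref{e:gron-x2}) for those $t \in [0,T]$ with $A_T \int_0^t b_u\, du < 1$; this is the only range in which the right-hand side is a finite positive number, so it is the implicit content of the statement.

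\emph{Step 1: avoid division by zero.} Fix $\eta > 0$ and set $A := A_T + \eta > 0$. I will prove the bound with $A$ in place of $A_T$, for all $t$ with $A\int_0^t b_u\,du < 1$. Then I let $\eta \downarrow 0$. For a fixed $t$ with $A_T\int_0^t b < 1$, the condition $A\int_0^t b<1$ holds for all small $\eta$, and the right-hand side is continuous in $A$. So the limit gives (\ref{e:gron-x2}).

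\emph{Step 2: differential inequality.} Define
\begin{equation*}
Y_t := A + \int_0^t b_u X_u^2\, du .
\end{equation*}
Since $a_t \le A_T < A$, the hypothesis gives $X_t \le Y_t$ for all $t\in[0,T]$. Because $b$ and $X$ are continuous, $Y$ is $C^1$. It is nondecreasing and satisfies $Y_t \ge A > 0$. Since $0 \le X_t \le Y_t$, we get $X_t^2 \le Y_t^2$, and hence
\begin{equation*}
Y_t' = b_t X_t^2 \le b_t Y_t^2 .
\end{equation*}

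\emph{Step 3: integrate.} Since $Y_t > 0$, we have $\frac{d}{dt}\bigl(-Y_t^{-1}\bigr) = Y_t'/Y_t^2 \le b_t$. Integrating from $0$ to $t$ gives
\begin{equation*}
\frac{1}{A} - \frac{1}{Y_t} \le \int_0^t b_u\,du,
\qquad\text{that is,}\qquad
\frac{1}{Y_t} \ge \frac{1 - A\int_0^t b_u\,du}{A}.
\end{equation*}
Whenever the numerator $1 - A\int_0^t b_u\,du$ is positive, we may invert to get
\begin{equation*}
X_t \le Y_t \le \frac{A}{1 - A\int_0^t b_u\,du}.
\end{equation*}
This is the desired bound with $A$ in place of $A_T$, and Step 1 completes the proof.

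\emph{Main point of care.} The only delicate issue is the sign of the denominator; the computation itself is routine. The positivity of $1 - A\int_0^t b$ must be assumed. Once it is, there is no blow-up issue: $Y$ is defined and finite on all of $[0,T]$ as an integral of continuous functions, so the inversion in Step 3 is legitimate.
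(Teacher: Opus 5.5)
Your proof is correct and is the paper's argument: the same comparison function $Y_t=\sup_{0\le s\le T} a_s+\int_0^t b_u X_u^2\,du$, the same inequality $\frac{d}{dt}(-1/Y_t)\le b_t$, and the same integration. Your $\eta$-shift and the explicit restriction to $t$ with $A_T\int_0^t b_u\,du<1$ make rigorous two points the paper passes over: dividing by $\sup a_t$ when it may vanish, and the sign of the denominator.
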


\begin{proof}
		Define
		\begin{equation}
				Y_t = \l(\sup_{0 < t < T} a_t \r) +  \int_0^t b_u X_u^2 du.
		\end{equation}
		Then $X_t \leq Y_t$ and furthermore
		\begin{equation}
				Y_t' = b_t X_t^2 \leq b_t Y_t^2.
		\end{equation}
		Thus, 
		\begin{equation}
				\frac{d}{dt} \l( - \frac{1}{Y_t} \r) \leq b_t,
		\end{equation}
		which implies
		\begin{equation}
				\frac{1}{Y_t} \geq \frac{1}{\sup_{0 < t < T} a_t} - \int_0^t b_u du,
		\end{equation}
		and (\ref{e:gron-x2}) follows.
\end{proof}

\begin{lemma}\label{l:log-div-1/Z}
		Let $Z \sim \Nor(\mu, \si^2)$ and $\h > 0$. Then it holds
		\begin{equation}\label{e:1/Z+h}
				\E \l[ \frac{1}{|Z + \i \h|} \r] \lesssim \frac{1}{\si} \l( 1 + \log \l( \frac{\si}{\h} \r) \r).
		\end{equation}
		Moreover, for a jointly Gaussian random vector,
		\begin{equation}
				\begin{bmatrix}
						Z_1	\\ Z_2	
				\end{bmatrix} \sim \Nor \l( 0, \begin{bmatrix}
				\si_1^2 & \d \\ \d & \si_2^2
				\end{bmatrix} \r),
		\end{equation}
		we have the estimate,
		\begin{equation}\label{e:1/Z+h-2}
				\E \l[ \frac{1}{|Z_1 + \i \h| |Z_2 + \i\h|} \r] \lesssim \frac{1}{\sqrt{D}} \min_{k\in\{1, 2\}} \l\{ \l( 1 + \log \l( \frac{\si_k}{\h} \r) \r) \l( 1 + \log \l( \frac{\sqrt{D}}{\h\si_k} \r) \r) \r\},
		\end{equation}
		where we denoted $D = \det \begin{bmatrix} 
				\si_1^2 & \d \\ \d & \si_2^2
		\end{bmatrix}$ the determinant and assume that $D > 0$.
\end{lemma}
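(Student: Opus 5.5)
The plan is to reduce both estimates to one elementary fact: the bound (\ref{e:1/Z+h}) depends only on $\si$ and $\h$, and is uniform in the mean $\mu$. Once that uniformity is in hand, the two-dimensional estimate (\ref{e:1/Z+h-2}) follows by conditioning on one coordinate. Throughout, $\log$ should be read as $\log_+ = \max(\log, 0)$, or equivalently we assume $\h \leq \si$. In the regime $\h \geq \si$ the bound (\ref{e:1/Z+h}) is trivial, since $1/|Z + \i\h| \leq 1/\h \leq 1/\si$. The same reading applies to the logarithms in (\ref{e:1/Z+h-2}).

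\emph{One-dimensional bound.} The density of $Z \sim \Nor(\mu, \si^2)$ is bounded by $(2\pi)^{-1/2}\si^{-1}$, uniformly in $\mu$. Also $|x + \i\h|^{-1} \leq \min(\h^{-1}, |x|^{-1})$. Split the real line into the two regions $|x| < \si$ and $|x| \geq \si$.
\begin{itemize}
\item On $\{|x| \geq \si\}$ we have $|x+\i\h|^{-1} \leq \si^{-1}$, so the contribution is at most $\si^{-1}$ because the density integrates to one.
\item On $\{|x| < \si\}$ we use the density bound and integrate $\min(\h^{-1}, |x|^{-1})$ over $|x| < \si$. This gives $\si^{-1}\bigl(2 + 2\log(\si/\h)\bigr)$.
\end{itemize}
Adding the two contributions gives (\ref{e:1/Z+h}) with a constant independent of $\mu$.

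\emph{Two-dimensional bound.} Fix $k \in \{1,2\}$ and let $j$ be the other index. Since $D > 0$, the conditional law of $Z_j$ given $Z_k$ is Gaussian. Its mean is $(\d/\si_k^2) Z_k$, which is random, and its variance is
\[
\si_j^2 - \d^2/\si_k^2 = D/\si_k^2 .
\]
We apply the one-dimensional bound conditionally, which is legitimate exactly because that bound is uniform in the mean. This gives
\[
\E\l[ \frac{1}{|Z_j + \i\h|} \,\Big|\, Z_k \r] \lesssim \frac{\si_k}{\sqrt D}\l(1 + \log\frac{\sqrt D}{\h\si_k}\r).
\]
The right-hand side is deterministic. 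By the tower property,
\[
\E\l[ \frac{1}{|Z_1+\i\h|\,|Z_2+\i\h|} \r] \lesssim \frac{\si_k}{\sqrt D}\l(1 + \log\frac{\sqrt D}{\h\si_k}\r) \E\l[\frac{1}{|Z_k + \i\h|}\r].
\]
We then apply (\ref{e:1/Z+h}) once more to $Z_k \sim \Nor(0, \si_k^2)$. The factors $\si_k$ and $\si_k^{-1}$ cancel, which leaves
\[
\frac{1}{\sqrt D}\l(1 + \log\frac{\si_k}{\h}\r)\l(1 + \log\frac{\sqrt D}{\h\si_k}\r).
\]
Since this holds for each choice of $k$, we may take the minimum over $k \in \{1,2\}$, which is (\ref{e:1/Z+h-2}).

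The only real subtlety is the first step: recognising that the bound must hold uniformly in the mean so that it can be applied conditionally. The logarithms must also be read as $\log_+$, so that the stated bounds are not degenerate when $\h$ exceeds the relevant standard deviation.
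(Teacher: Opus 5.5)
Your proposal is correct and follows the paper's proof: split the line at $|x| = \si$, bound the Gaussian density by $\si^{-1}$ on the inner region, then condition on $Z_k$ using $Z_j \mid Z_k \sim \Nor(\d Z_k/\si_k^2, D/\si_k^2)$ and apply the one-dimensional bound twice. The one difference is minor and in your favor: you get uniformity in the mean directly from the sup-norm bound on the density, rather than asserting a reduction to $\mu = 0$ and using a Gaussian tail bound as the paper does, which is exactly what the conditional step needs; your $\log_+$ reading also covers the degenerate case $\sqrt{D} < \h\si_k$, which the paper leaves implicit.
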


\begin{proof}
		First note that it suffices to prove (\ref{e:1/Z+h}) for when $\mu = 0$. Now, if $\si < \h$ we have
		\begin{equation}
				\frac{1}{|Z + \i\h|} \leq \frac{1}{\h} \leq \frac{1}{\si}.
		\end{equation}
		Otherwise, if $\si > \h$ we have
		\begin{align*}
				\E \l[ \frac{1}{|Z + \i\h|} \r] &\lesssim \frac{1}{\si }\l[ \int_0^\si \frac{1}{\sqrt{x^2 + \h^2}} dx + \int_\si^\infty \frac{1}{x} e^{- \frac{x^2}{2\si^2}} dx \r]  \\ 
												&\lesssim \frac{1}{\si} \l[ \l.\log \l( x + \sqrt{x^2 + \h^2} \r)\r|_0^\si + \int_{1}^\infty x e^{-x^2/2} dx \r] \\
												&\lesssim \frac{1}{\si} \l[ \log(\si/\h) + 1 \r].
		\end{align*}
		Now, to show (\ref{e:1/Z+h-2}) we condition on $Z_1$ and apply (\ref{e:1/Z+h}). First, recall that for jointly Gaussian random variables,
		\begin{equation}
				Z_2 | Z_1 \sim \Nor \l( \frac{\d}{\si_1^2} Z_1, \frac{D}{\si_1^2} \r),
		\end{equation}
		where $D := \si_1^2 \si_2^2 - \d^2$ is the determinant. Then applying (\ref{e:1/Z+h}) we can write
		\begin{align*}
				\E \l[ \frac{1}{|Z_1 + \i\h| |Z_2 + \i\h|} \r] &= \E_{Z_1} \l[ \frac{1}{|Z_1 + \i\h|} \E_{Z_2 | Z_1} \l[ \frac{1}{|Z_2 + \i\h|} \r] \r] \\
															   &\lesssim \E_{Z_1} \l[ \frac{1}{|Z_1 + \i\h|}  \r] \l( \frac{\si_1}{\sqrt{D}} \l( 1 + \log \l( \frac{\sqrt{D}}{\h\si_1} \r) \r) \r) \\
															   &= \frac{1}{\sqrt{D}} \l( 1 + \log \l( \frac{\si_1}{\h} \r) \r) \l( 1 + \log \l( \frac{\sqrt{D}}{\h \si_1} \r) \r).
		\end{align*}
		Repeating the argument with $Z_2$ instead of $Z_1$ yields (\ref{e:1/Z+h-2}).
\end{proof}



\end{document}